\newtheorem{theorem}{Theorem}[section]
\newtheorem{definition}[theorem]{Definition}
\newtheorem{example}[theorem]{Example}
\newtheorem{lemma}[theorem]{Lemma}
\newtheorem{lem}[theorem]{Lemma}
\newtheorem{proposition}[theorem]{Proposition}
\newtheorem{remark}[theorem]{Remark}
\numberwithin{equation}{section}
\newcommand{\Z}{\mathbb{Z}}
\newcommand{\Q}{\mathbb{Q}}
\newcommand{\C}{\mathbb{C}}
\newcommand{\Hy}{\mathscr{H}}
\newcommand{\End}{\text{End}\ }
\newcommand{\gl}{\mathfrak g \mathfrak l}
\newcommand{\V}{\mathbb V}
\newcommand{\W}{\mathbb {W}}
\newcommand{\io}{\imath}
\newcommand{\U}{\mathbf U}
\newcommand{\Ui}{\mathbf U^{\imath}}
\newcommand{\va}{\varsigma}
\newcommand{\so}{\mathfrak s \mathfrak o}
\newcommand{\spin}{\mathfrak s \mathfrak p}
\newcommand{\Qy}{e}
\newcommand{\B}{\mathfrak B}
\newcommand{\ty}{\mathfrak b}
\begin{document}
\title[Canonical bases of $q$-Brauer algebras and $\imath$Schur dualities]
{Canonical basis of $q$-Brauer algebras and $\imath$Schur dualities}

\author[Weideng Cui]{Weideng Cui}
\address{School of Mathematics, Shandong University, Jinan, Shandong 250100, China}
\email{cwdeng@amss.ac.cn (Cui)}

\author[Yaolong Shen]{Yaolong Shen}
\address{Department of Mathematics\\ University of Virginia\\ Charlottesville, VA 22904}
\email{ys8pfr@virginia.edu (Shen)}


\begin{abstract}
Expanding the classical work of Kazhdan-Lusztig, we construct a bar involution and canonical bases on the $q$-Brauer algebra introduced by Wenzl. We define explicit actions of the $q$-Brauer algebra on the tensor spaces, and formulate $\imath$Schur dualities between the $q$-Brauer algebra and the $\imath$quantum groups of type AI and AII respectively.
\end{abstract}

\maketitle
\setcounter{tocdepth}{1}
\tableofcontents

\section{Introduction}

In the classical Schur duality, the actions of the general linear group $GL_m$ and the symmetric group $\mathfrak S_n$ on the tensor space $(\C^m)^{\otimes n}$ commute with each other and satisfy the double centralizer property. A quantum analog of this duality is provided by the quantum group $\U_q(\gl_m)$ and the type $A$ Iwahori-Hecke algebra $\Hy_{\mathfrak S_n}$; see \cite{Jim86}. Moreover, the type $A$ (parabolic) Kazhdan-Lusztig basis (\cite{KL79}, \cite{De87}) can be identified with the canonical basis on the tensor product of the natural representation of the type $A$ quantum group via the Schur-Jimbo duality  \cite{FKK98} (cf. \cite{LW20}).

In \cite{Br37} Brauer introduced the so-called Brauer algebra, and established the double centralizer property between it and the orthogonal group $O_m$ or symplectic group $Sp_{2m}$. The Brauer algebra was further studied in \cite{Br56a}, \cite{Br56b} and so on. The Birman-Murakami-Wenzl algebra (or BMW algebra for short), as a two-parameter deformation of the Brauer algebra, was algebraically defined by Birman and Wenzl \cite{BW89}, and independently by Murakami \cite{Mu87}. In the Schur-Jimbo duality, when $\U_q(\gl_m)$ is replaced by $\U_q(\mathfrak o_m)$ or $\U_q(\spin_{2m})$, the role of $\Hy_{\mathfrak S_n}$ was played by the BMW algebra with the parameters being appropriately specialized; see \cite{CP94} or \cite{Ha92}. In \cite{FG95} a canonical basis of the BMW algebra has been constructed and the associated cell structure has been studied. However, the Iwahori-Hecke algebra $\Hy_{\mathfrak S_n}$ is not naturally a subalgebra of the BMW algebra while the algebras $\U_q(\mathfrak o_m)$ and $\U_q(\spin_{2m})$ are not isomorphic to subalgebras of the type $A$ quantum groups either.

Besides the BMW algebra, another multi-parameter deformation of the Brauer algebra, which depends on two indeterminates $q$ and $z$, was introduced by Molev \cite{M03}; moreover, Molev showed that the action of his algebra (specializing $z$ to $q^{m}$) on $\V^{\otimes n}$ commutes with that of the twisted quantized enveloping algebra $\U_{q}^{\mathrm{tw}}(\mathfrak{so}_{m})$ introduced by Noumi in \cite{No96}, where $\V$ is the natural representation of $\U_{q}(\mathfrak{sl}_{m})$ (also cf. \cite{We12b}). 

Later on, in \cite{We12a}, Wenzl defined a quotient of Molev's algebra called the {\em $q$-Brauer algebra}, which will be the main object of this paper as its defining relations are more friendly to work with. Many properties of the $q$-Brauer algebra have been studied by Nguyen. For example, in \cite{N14}, Nguyen constructed a standard basis for the $q$-Brauer algebra which is labeled by a natural basis of Brauer algebras; the $q$-Brauer algebra contains $\Hy_{\mathfrak S_n}$ as a natural subalgebra under the standard basis. Moreover, in \cite{N14} and \cite{N18}, it was shown that the $q$-Brauer algebra is a cellular algebra and its irreducible representations can be classified using the general theory of cellular algebras in \cite{GL96}.


The aforementioned twisted quantized enveloping algebra $\U_{q}^{\mathrm{tw}}(\mathfrak{so}_{m})$ can be regarded as an example of the $\imath$quantum group arising from so-called quantum symmetric pairs of type AI; see \cite[Section 6]{Let99}. Given an involution $\theta$ on a complex simple Lie algebra $\mathfrak{g}$, we can obtain a symmetric pair $(\mathfrak{g}, \mathfrak{g}^{\theta})$, or a pair of enveloping algebras $(\mathbf{U}(\mathfrak{g}), \mathbf{U}(\mathfrak{g}^{\theta}))$, where $\mathfrak{g}^{\theta}$ denotes the fixed point subalgebra. The pairs $(\mathfrak{sl}_{m}, \mathfrak{so}_{m})$ and $(\mathfrak{sl}_{2m}, \mathfrak{sp}_{2m})$ are examples of symmetric pairs. The classification of symmetric pairs of finite type is equivalent to the classification of real simple Lie algebras, which can be described in terms of the Satake diagrams. Letzter \cite{Let99}, \cite{Let02} systematically developed the theory of quantum symmetric pairs $(\mathbf{U}, \mathbf{U}^{\imath})$ of finite type as a quantization of $(\mathbf{U}(\mathfrak{g}), \mathbf{U}(\mathfrak{g}^{\theta}))$. The algebra $\U^\io$ will be referred to as an $\io$quantum group. Kolb \cite{Ko14} has further studied and generalized this theory to the Kac-Moody case.

In recent years, Bao and Wang have generalized Lusztig's approach on canonical bases in \cite{Lu94} and developed a general theory of the canonical basis for $\imath$quantum groups arising from quantum symmetric pairs of arbitrary finite type in \cite{BW18b} and of Kac-Moody type in \cite{BW21}. They showed that any based module of a quantum group of finite type  (cf. \cite[Chapter 27]{Lu94}), when viewed as a module over an $\imath$quantum group, is endowed with a new bar involution and a unique bar-invariant basis (called the $\imath$canonical basis).

Bao and Wang have also shown in \cite{BW18a} that the Iwahori-Hecke algebra of type $B$ and the $\imath$quantum group of type AIII satisfy a double centralizer property which generalizes the Schur-Jimbo duality  (also cf. \cite{B17}); moreover, the Kazhdan-Lusztig basis of type $B$ was shown to coincide with the $\imath$canonical basis on the tensor product module of the $\imath$quantum group. Later on, this $\imath$Schur duality was further generalized to a multi-parameter setting in \cite{BWW18}. More recently, in \cite{SW21}, a common generalization of Schur dualities of both types $A$ and $B$ has been constructed.

In this paper, we first study the $q$-Brauer algebra $\B_n(q,z)$ and define a bar involution on it. The bar involution is shown to be compatible with the one on its natural subalgebra $\Hy_{\mathfrak S_n}$. Applying the bar involution to the standard basis of $\B_n(q,z)$ constructed in \cite{N14}, we are able to construct a Kazhdan-Lusztig-type basis (called the canonical basis) on $\B_n(q,z)$ through a standard approach due to Lusztig. A direct consequence of the compatibility of the bar involutions is that the usual type $A$ Kazhdan-Lusztig basis is a part of the canonical basis we obtain. Moreover, one can see that the coefficients, when expanding the canonical basis as a sum of the standard basis elements, are polynomials in $q$, which do not depend on $z$. A similar phenomenon was found in \cite[\S 5.2]{FG95}.

Enlightened by the pioneering work about $\imath$Shur dualities in \cite{BWW18}, we then construct explicit actions of the $q$-Brauer algebra $\B_n(q,z)$ on the tensor product modules of the $\io$quantum groups of type AI and AII respectively. We show that the actions we define indeed commute with the natural actions of the $\io$quantum groups (allowing any parameter) and prove the double centralizer property in both cases. The double centralizer property follows from taking a $q\to 1$ limit and classical results in \cite{Br37} (cf. \cite{Br56a}, \cite{Br56b}), as the $\imath$quantum group reduces to the enveloping algebra of the special orthogonal (resp. symplectic) Lie algebra. The commuting action in the case of type AI was also formulated in \cite{M03} with a restriction on the parameters of the $\imath$quantum group and using the $R$-matrix presentation of the quantum group $\U_q(\gl_m)$. Both commuting actions were also discovered in \cite[(7.10)--(7.11)]{ST19} through the Web category but not explicitly constructed.



To further extend this work, it remains an open question whether the $\imath$canonical basis (for suitable $z$) on the tensor product module can be realised and reconstructed purely through the $q$-Brauer algebra actions. In \cite{W21}, Wang has proposed a positivity conjecture for the $\imath$canonical basis of (quasi-)split ADE-type. It makes sense to formulate a conjecture that the canonical basis of the $q$-Brauer algebra $\B_n(q,z)$ we construct has the positivity as well. In this paper some small rank cases (when $n=2,3$) have been checked to support the conjecture, that is, one can see that the canonical basis and the structure constants with respect to it have certain positivity in those cases.


The paper is organized as follows. In \S \ref{sec:Brauer}, we recall the definition of the $q$-Brauer algebra $\B_n(q,z)$ and the construction of its standard basis. In \S \ref{sec:bar}, we construct a bar involution and a canonical basis for $\B_n(q,z)$. In \S \ref{sec:AI}--\S\ref{sec:AII}, we formulate $\imath$Schur dualities between $\B_n(q,z)$ (with $z$ being appropriately specialized) and the $\imath$quantum groups of type AI and AII, respectively. In more detail, in \S \ref{sec:AI}, we define commuting actions on $\V^{\otimes n}$ between $\B_n(q,q^{m})$ and the $\imath$quantum group $\U^\io(\so_{m})$ of type AI and prove that they satisfy the double centralizer property. In \S \ref{sec:AII}, we deal with the case of the $\imath$quantum group of type AII.



%
%

\vspace{2mm}
\noindent {\bf Acknowledgement.}
We thank Weiqiang Wang for encouraging the collaboration and providing insightful advice. We also thank an anonymous referee for helpful suggestions. W.~Cui is partially supported by Young Scholars Program of Shandong University, Shandong Provincial Natural Science Foundation (Grant No. ZR2021MA022) and the NSF of China (Grant No. 11601273). Y.~Shen is supported by a GSAS fellowship at University of Virginia and WW's NSF Graduate Research Assistantship (DMS-2001351).


\section{$q$-Brauer algebras}
\label{sec:Brauer}
\subsection{Brauer algebras}
Fix an integer $N\in\mathbb{Z}\setminus \{0\}$. Let $D_{n}(N)$ denote the Brauer algebra which is a $\mathbb{Z}$-algebra with a linear basis consisting of all partitions of the set $$\{1,2,\ldots,n,1',2',\ldots,n'\}$$ into two-element subsets. As usual, we can represent each basis element by a diagram with two rows, where the top row has $n$ vertices marked by $1,2,\ldots,n$, and the bottom row is numbered by $1',2',\ldots,n'$; the vertex $i$ is joined to $j$ by an edge if they are in the same subset. We will call an edge {\em horizontal} if it connects two vertices on the same row, and {\em vertical} otherwise. Two diagrams $d_1$ and $d_2$ are multiplied by concatenation, that is, $d_1\cdot d_2$ is defined to be $N^{\gamma(d_1, d_2)}d$, where $\gamma(d_1, d_2)$ counts the number of cycles produced by forming the concatenation and $d$ is the resulting diagram after removing all cycles.

In fact, we have the following presentation for the Brauer algebra $D_{n}(N)$.
\begin{definition}{\rm (cf. \cite[$\S$2.1.1]{N14}) }\label{brau}
The Brauer algebra $D_{n}(N)$ is the unital associative $\mathbb{Z}$-algebra generated by $s_1,\ldots, s_{n-1}$, together with elements $e_{(1)}, e_{(2)}, \ldots, e_{(\lfloor\frac{n}{2}\rfloor)}$, which satisfy the following relations:
\begin{equation*}
\begin{aligned}
&(S_{1})\ s_i^{2}=1&&\mbox{for $1\leq i\leq n-1$,}\\[0.1cm]
&(S_{2})\ s_is_{i+1}s_i=s_{i+1}s_is_{i+1}&&\mbox{for $1\leq i\leq n-2$,}\\[0.1cm]
&(S_{3})\ s_is_j=s_js_i&&\mbox{for $|i-j|\geq 2$,}\\[0.1cm]
&(1)\ e_{(k)}e_{(i)}=e_{(i)}e_{(k)}=N^{i}e_{(k)}&&\mbox{for $1\leq i\leq k$,}\\[0.1cm]
&(2)\ e_{(i)}s_{2j}e_{(k)}=e_{(k)}s_{2j}e_{(i)}=N^{i-1}e_{(k)}&&\mbox{for $1\leq j\leq i\leq k$,}\\[0.1cm]
&(3)\ s_{2i+1}e_{(k)}=e_{(k)}s_{2i+1}=e_{(k)}&&\mbox{for $0\leq i< k$,}\\[0.1cm]
&(4)\ s_{i}e_{(k)}=e_{(k)}s_{i}&&\mbox{for $i\geq 2k+1$,}\\[0.1cm]
&(5)\ s_{2i-1}s_{2i}e_{(k)}=s_{2i+1}s_{2i}e_{(k)}&&\mbox{for $1\leq i< k$,}\\[0.1cm]
&(6)\ e_{(k)}s_{2i}s_{2i-1}=e_{(k)}s_{2i}s_{2i+1}&&\mbox{for $1\leq i< k$,}\\[0.1cm]
&(7)\ e_{(k+1)}=e_{(1)}s_{2}\cdots s_{2k+1}s_{1}\cdots s_{2k}e_{(k)}&&\mbox{for $1\leq k< \lfloor\frac{n}{2}\rfloor$,}\\[0.1cm]
\end{aligned}
\end{equation*}
\end{definition}

Observe that the subalgebra of $D_{n}(N)$ generated by $s_1,\ldots, s_{n-1}$ is isomorphic to $\mathbb{Z}\mathfrak{S}_n$, where $\mathfrak{S}_n$ is the symmetric group on $n$ letters. It is spanned by the basis diagrams which only have vertical edges. In \cite[\S 2]{Br37} Brauer points out that each basis diagram in $D_n(N)$ which has exactly $2k$ horizontal edges can be obtained in the form $w_1e_{(k)}w_2$, where $w_1$ and $w_2$ are two permutations in $S_n$ and $e_{(k)}$ is a diagram of the following form:
\begin{equation*}
\begin{tikzpicture}[scale=1,semithick]
\node (1) [circle,fill=black,scale=0.4] at (1,1){};
\node (2) [circle,fill=black,scale=0.4] at (2,1){};
\node (2.5) at (2.5,0.5){$\cdots$};
\node (3) [circle,fill=black,scale=0.4] at (3,1){};
\node (4) [circle,fill=black,scale=0.4] at (4,1){};
\node (5) [circle,fill=black,scale=0.4] at (5,1){};
\node (6) [circle,fill=black,scale=0.4] at (6,1){};
\node (7) [circle,fill=black,scale=0.4] at (7,1){};
\node (8) [circle,fill=black,scale=0.4] at (8,1){};
\node (9) [circle,fill=black,scale=0.4] at (1,0){};
\node (10) [circle,fill=black,scale=0.4] at (2,0){};
\node (11) [circle,fill=black,scale=0.4] at (3,0){};
\node (12) [circle,fill=black,scale=0.4] at (4,0){};
\node (13) [circle,fill=black,scale=0.4] at (5,0){};
\node (14) [circle,fill=black,scale=0.4] at (6,0){};
\node (15) [circle,fill=black,scale=0.4] at (7,0){};
\node (16) [circle,fill=black,scale=0.4] at (8,0){};
\node (17) at (7.5,0.5){$\cdots$};

\path (1) edge (2)
          (3) edge (4)
          (5) edge (13)
          (6) edge (14)
          (7) edge (15)
          (8) edge (16)
          (9) edge (10)
          (11) edge (12);
\end{tikzpicture}
\end{equation*}
where each row has exactly $k$ horizontal edges.

Let $s_1,\ldots, s_{n-1}$ be the simple reflections in $\mathfrak{S}_n$ as above. For each $w\in \mathfrak{S}_n$, let $\ell(w)$ be the smallest integer $r\in \Z_{\geqslant 0}$ such that $w=s_{i_1}s_{i_2}\cdots s_{i_r}$; we then say that $s_{i_1}s_{i_2}\cdots s_{i_r}$ is a {\em reduced expression} of $w$ and $\ell(w)$ is the length of $w$. By using the length function on $\mathfrak{S}_n$, Wenzl \cite[\S1.4]{We12a} defined a length function on $D_n(N)$ as follows: for each basis diagram $d\in D_n(N)$ with exactly $2k$ horizontal edges, the length $\ell(d)$ of it is defined by
$$\ell(d)=\hbox{min}\{\ell(\omega_1)+\ell(\omega_2)\:|\:d=\omega_1e_{(k)}\omega_2, \omega_1, \omega_2\in \mathfrak{S}_n\}.$$

For $1\leq i,j\leq n-1$, let
\begin{align*}
s_{i,j}=\begin{cases}s_is_{i+1}\cdots s_{j}& \hbox {if } i\leq j, \\s_is_{i-1}\cdots s_{j}& \hbox {if } i>j.\end{cases}
\end{align*}
It is easy to prove that
\begin{align*}
\mathfrak{S}_n=\mathfrak{S}_{n-1}\bigsqcup(\bigsqcup_{r=1}^{n-1}s_{r,n-1}\mathfrak{S}_{n-1})\hbox{  ~~~~   (a disjoint union),}
\end{align*}
and moreover, $\ell(s_{r,n-1}w)=\ell(s_{r,n-1})+\ell(w)$ for any $w\in \mathfrak{S}_{n-1}$. Hence, we see that for any $w\in \mathfrak{S}_n$, there exist unique elements $t_{n-1}, t_{n-2},\ldots, t_1$ such that $w=t_{n-1}t_{n-2}\cdots t_1$, where $t_j=1$ or $t_j=s_{i_j, j}$ with $1\leq j\leq n-1$ and $1\leq i_j\leq j$, and moreover, $\ell(w)=\ell(t_{n-1})+\ell(t_{n-2})+\cdots+\ell(t_1)$. For each $0\leq k\leq \lfloor\frac{n}{2}\rfloor$, we set
\begin{align}\label{b_k8}
\begin{split}
B_{k}^{*}=\{t_{n-1}t_{n-2}\cdots t_{2k}t_{2k-2}t_{2k-4}\cdots t_2\:|\:\forall ~j,~ t_j=1 \mbox{ or } t_j=s_{i_j, j} \mbox{ for some }1\leq i_j\leq j\},
\end{split}
\end{align}
where $B_{0}^{*}$ is understood as the entire symmetric group $\mathfrak{S}_n$. We set $B_{k} :=\{\omega^{-1}\:|\:\omega\in B_{k}^{*}\}$. Observe that $B_k^*$ has $\frac{n!}{2^k k!}$ elements (cf. \cite[Remark 2.1(3)]{N14}).

\subsection{$q$-Brauer algebras}
Let $q$ and $z$ be two invertible indeterminates.
\begin{definition}{\rm (cf. \cite[Definition 3.1]{We12a}, \cite[Definition 3.1]{N14}) }\label{def:qB1}
Fix $n\in \Z_{\geqslant 2}$. We define the $q$-Brauer algebra $\B_n(q,z)$ over $\Q(q,z)$ with generators $H_1,\ldots, H_{n-1},\Qy$ and the following relations:
\begin{equation*}
\begin{aligned}
&(Q1)\ (H_i-q)(H_i+q^{-1})=0,\\
&(Q2)\ H_iH_{i+1}H_i=H_{i+1}H_iH_{i+1},\\ 
&(Q3)\ H_iH_j=H_jH_i\ \ \text{for }|i-j|>1,\\
&(Q4)\ \Qy^2=\frac{z-z^{-1}}{q-q^{-1}}\Qy,\\
&(Q5)\ H_{1}\Qy=\Qy H_{1}=q\Qy,\\
&(Q6)\ \Qy H_{2}\Qy=z\Qy,\\
&(Q7)\ H_i\Qy=\Qy H_i\ \ \text{for }i>2,\\
&(Q8)\ H_{2}H_{3}H_{1}^{-1}H_{2}^{-1}\Qy_{(2)}=\Qy_{(2)}=\Qy_{(2)}H_{2}H_{3}H_{1}^{-1}H_{2}^{-1}, \text{ where } \Qy_{(2)}=\Qy(H_{2}H_{3}H_{1}^{-1}H_{2}^{-1})\Qy.
\end{aligned}
\end{equation*}
\end{definition}



The following proposition gives the dimension of the $q$-Brauer algebra $\B_n(q,z)$.
\begin{proposition}{\rm (\cite[Theorem 3.8]{We12a})}
The $q$-Brauer algebra $\B_n(q,z)$ is a free $\Q(q,z)$-module of rank $(2n-1)!!=(2n-1)(2n-3)\cdots 1$.
\end{proposition}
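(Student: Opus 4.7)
The plan is to establish the equality $\dim_{\Q(q,z)} \B_n(q,z) = (2n-1)!!$ by a two-sided bound: an upper bound via a spanning set of the claimed size, and a lower bound via a specialization to the classical Brauer algebra.

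For the upper bound, I would first define $q$-analogs $\Qy_{(k)}$ of the diagrams $e_{(k)}$ recursively, mimicking relation~(7) of Definition~\ref{brau}: set $\Qy_{(1)}:=\Qy$ and
\[
\Qy_{(k+1)} := \Qy\, H_2 H_3 \cdots H_{2k+1} H_1^{-1} H_2^{-1}\cdots H_{2k}^{-1}\, \Qy_{(k)},
\]
which is well-defined thanks to (Q8). Then, following the Brauer-diagram parametrization described after Definition~\ref{brau} and the coset decomposition
$\mathfrak S_n = \mathfrak S_{n-1} \sqcup \bigsqcup_{r=1}^{n-1} s_{r,n-1}\mathfrak S_{n-1}$, I would argue inductively on $n$ that every monomial in the generators can be rewritten, via repeated application of (Q1)-(Q8), as a $\Q(q,z)$-linear combination of elements of the form $H_{\omega_1}\Qy_{(k)} H_{\omega_2}$ with $\omega_1 \in B_k$ and $\omega_2 \in B_k^{*}$ (modulo the appropriate stabilizer of $e_{(k)}$). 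The counting $|B_k^*|=n!/(2^k k!)$ recorded in \eqref{b_k8} together with the standard combinatorial identity $\sum_{k=0}^{\lfloor n/2\rfloor}\bigl[\binom{n}{2k}(2k-1)!!\bigr]^2 (n-2k)! = (2n-1)!!$ then gives the upper bound.

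For the lower bound, I would exhibit a faithful specialization. Relations (Q1)-(Q8) degenerate, under $q\to 1$ and $z\to N$, to the relations $(S_1)$-$(S_3),(1)$-$(7)$ of the classical Brauer algebra $D_n(N)$, with $H_i \mapsto s_i$ and $\Qy_{(k)} \mapsto e_{(k)}$. Since $D_n(N)$ is known to have $\Z$-rank $(2n-1)!!$ with basis the Brauer diagrams, and since the spanning set produced in the first step maps onto this basis, any nontrivial $\Q(q,z)$-linear dependence among our spanning elements, once cleared of denominators and specialized, would contradict the classical result. Hence the spanning set is linearly independent, which yields the matching lower bound.

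The main obstacle is the spanning step, and specifically the straightening needed to prove that the recursive definition of $\Qy_{(k)}$ behaves well under left and right multiplication by the $H_i$'s. Relations (Q5)-(Q7) only describe how $\Qy=\Qy_{(1)}$ interacts with $H_1, H_2$, and $H_i$ for $i>2$; lifting these to analogous relations for $\Qy_{(k)}$ (e.g.\ $H_{2i+1}\Qy_{(k)}=\Qy_{(k)}H_{2i+1}=q\Qy_{(k)}$ for $0\le i<k$, and the $q$-analog of identities (5)-(6) of Definition~\ref{brau}) requires a careful induction built on (Q8) and the braid relations. Once these auxiliary identities are in place, the rewriting of an arbitrary word into standard form proceeds by mirroring the Brauer-algebra normal-form argument, with each horizontal edge created or destroyed accounted for by an explicit factor from (Q4)-(Q6).
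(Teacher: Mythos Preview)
The paper does not supply its own proof of this proposition; it is quoted verbatim from Wenzl \cite[Theorem 3.8]{We12a}, and the explicit basis $\{H_d : d\in I_n\}$ is then imported from Nguyen (Proposition~\ref{thmhdstandard}). Your two-step strategy---spanning set of size $(2n-1)!!$ followed by linear independence via specialization to the Brauer algebra---is the standard one and is essentially Wenzl's argument, so there is no meaningful difference in approach to report.

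One technical point deserves care. The specialization ``$q\to 1$, $z\to N$'' that you invoke is not directly well-defined on the presentation in Definition~\ref{def:qB1}, because relation (Q4) involves $\frac{z-z^{-1}}{q-q^{-1}}$, which has no limit under independent specialization of $q$ and $z$. The correct maneuver is either to first set $z=q^{N}$ (so that (Q4) reads $e^2=[N]_q\,e$) and then send $q\to 1$, or to work from the outset over an integral form in which the structure constant of (Q4) is treated as a separate parameter. With that adjustment your degeneration argument is sound. The auxiliary ``straightening'' identities for $\Qy_{(k)}$ that you correctly identify as the crux of the spanning step are exactly the relations collected in Lemma~\ref{identiesti} of the present paper (originally due to Wenzl and Nguyen), so you need not redevelop them from scratch.
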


Let
\begin{align*}
H_{l,r}^{+}=\begin{cases}H_lH_{l+1}\cdots H_{r}& \hbox {if } l\leq r, \\H_lH_{l-1}\cdots H_{r}& \hbox {if } l>r,\end{cases}
\end{align*}
and
\begin{align*}
H_{l,r}^{-}=\begin{cases}H_l^{-1}H_{l+1}^{-1}\cdots H_{r}^{-1}& \hbox {if } l\leq r, \\H_l^{-1}H_{l-1}^{-1}\cdots H_{r}^{-1}& \hbox {if } l>r,\end{cases}
\end{align*}
for $1\leq l,r\leq n$.

We make the convention that $e_{(0)}=1$. For each $1\leq k\leq \lfloor\frac{n}{2}\rfloor$, we define the elements $e_{(k)}$ in $\B_n(q,z)$ inductively by
$$e_{(1)}=e\quad \mathrm{and}\quad e_{(k+1)}=eH_{2,2k+1}^{+}H_{1,2k}^{-}e_{(k)}~\mathrm{ for }~k\geq 1.$$

\begin{remark} We will abuse the notation by denoting $e_{(k)}$ both a basis diagram in the Brauer algebra $D_{n}(N)$ and an element in the $q$-Brauer algebra $\B_n(q,z)$.
\end{remark}

In the following lemma we shall collect a few identities in $\B_n(q,z)$ which will be used in the sequel.
\begin{lemma}{\rm (cf. \cite[Lemmas 3.2-3.3]{We12a}, \cite[Lemmas 3.3-3.4]{N14}, \cite[Lemma 3.1]{N18}, \cite[Remark 3.10(1)]{N14})}\label{identiesti}
In $\B_n(q,z)$ we have 

$(1)$ $e_{(2)}=e(H_{2}^{-1}H_{1}^{-1}H_{3}H_{2})e=e(H_{2}^{-1}H_{3}^{-1}H_{1}H_{2})e$,

$(2)$ $H_{2j+1}e_{(k)}=e_{(k)}H_{2j+1}=qe_{(k)}$ and $H_{2j+1}^{-1}e_{(k)}=e_{(k)}H_{2j+1}^{-1}=q^{-1}e_{(k)}$ for $0\leq j< k$,

$(3)$ $e_{(k)}H_{2j}H_{2j-1}=e_{(k)}H_{2j}H_{2j+1}$ and $e_{(k)}H_{2j}^{-1}H_{2j-1}^{-1}=e_{(k)}H_{2j}^{-1}H_{2j+1}^{-1}$ for $1\leq j< k$,

$(4)$ $H_{2j-1}H_{2j}e_{(k)}=H_{2j+1}H_{2j}e_{(k)}$ and $H_{2j-1}^{-1}H_{2j}^{-1}e_{(k)}=H_{2j+1}^{-1}H_{2j}^{-1}e_{(k)}$ for $1\leq j< k$,

$(5)$ $\Big(\frac{z-z^{-1}}{q-q^{-1}}\Big)^{j-1}e_{(k+1)}=e_{(j)}H_{2j,2k+1}^{+}H_{2j-1,2k}^{-}e_{(k)}$ for $1\leq j< k$,

$(6)$ $e_{(j)}e_{(k)}=e_{(k)}e_{(j)}=\Big(\frac{z-z^{-1}}{q-q^{-1}}\Big)^{j}e_{(k)}$ for $1\leq j\leq k$,

$(7)$ $e_{(j)}H_{2j}e_{(k)}=e_{(k)}H_{2j}e_{(j)}=z\Big(\frac{z-z^{-1}}{q-q^{-1}}\Big)^{j-1}e_{(k)}$ for $1\leq j\leq k$,

$(8)$ $e_{(i)}H_{j}=H_{j}e_{(i)}$ for $i\geq 1$ and $j\geq 2i+1$.
\end{lemma}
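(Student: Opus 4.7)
The plan is to prove the eight identities by induction on $k$, using the recursive definition $e_{(k+1)} = e H_{2,2k+1}^+ H_{1,2k}^- e_{(k)}$ together with the defining relations (Q1)--(Q8). Since all eight statements (or close variants) appear in the cited works of Wenzl and Nguyen, the task is organizational rather than original; the natural order in which to establish them is (1), then the ``pushing'' identities (2)--(4) and (8), and finally the multiplicative identities (5), (6), (7).

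Identity (1) is a rewriting of the definition $e_{(2)} = e H_2 H_3 H_1^{-1} H_2^{-1} e$: I would apply the braid relation (Q2) to rebraid the inner factors and use (Q5) to absorb any $H_1^{\pm 1}$ that migrates outward against the boundary $e$'s, producing both stated forms. The pushing identities (2), (3), (4), (8) all follow by the same inductive scheme: the base cases $k=1$ reduce to (Q5) for (2), to (Q7) for (4) and (8), and are vacuous for (3); for the inductive step, I write $e_{(k+1)}$ in its recursive form and commute the target generator across the word $H_{2,2k+1}^+ H_{1,2k}^-$ via (Q2) and (Q3), after which the surviving generator meets $e_{(k)}$ at a shifted index to which the inductive hypothesis applies, with (Q1) invoked to cancel any square $H_i H_i^{-1}$ that appears.

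The heart of the lemma is identity (5), which I would prove by induction on $j$ with $k$ fixed, the base $j=1$ being the definition of $e_{(k+1)}$. For the inductive step, one inserts the defining expression for $e_{(j+1)}$ on the left, uses (2)--(4) to align indices, and applies (Q4) once to extract a factor of $(z-z^{-1})/(q-q^{-1})$. Identity (6) then follows from (5) by iterating (Q4); the left-right symmetry of the recursion underlying (5) yields the commutativity $e_{(j)} e_{(k)} = e_{(k)} e_{(j)}$. Identity (7) is deduced from (5) together with (Q6), which contributes the factor $z$.

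The main obstacle I expect is the index bookkeeping inside the inductions for (5)--(7): every generator introduced by unfolding the recursive definition must be moved to an index at which (2), (3), or (4) applies, and this is easy to mishandle without a disciplined treatment of the words $H_{l,r}^{\pm}$ and their parities relative to $2k$. A useful preliminary step would be to prove an auxiliary telescoping identity expressing $H_{2j,2k+1}^+ H_{2j-1,2k}^- e_{(k)}$ in a preferred normal form, which would both streamline the induction for (5) and make the reductions for (6) and (7) essentially mechanical.
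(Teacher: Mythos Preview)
The paper gives no proof of this lemma at all: it is stated with attributions to \cite{We12a}, \cite{N14}, and \cite{N18}, and used as a black box. Your outline is precisely the strategy used in those references---induction on $k$ via the recursion $e_{(k+1)}=eH_{2,2k+1}^{+}H_{1,2k}^{-}e_{(k)}$, with (Q1)--(Q8) driving the commutations---so there is nothing to compare against beyond noting that your plan matches the cited literature. One small correction: the base case of (4) at $k=1$ is vacuous (the range $1\le j<1$ is empty), not a consequence of (Q7); (Q7) is only needed as the base for (8).
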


Let $w\in \mathfrak{S}_n$ and let $w=s_{i_1}\cdots s_{i_r}$ be a reduced expression of $w$. It is well-known that the element $H_{w} :=H_{i_1}\cdots H_{i_r}$ does not depend on the choice of the reduced expression of $w$. Let $\mathfrak{S}_{2k+1, n}$ be the subgroup of $\mathfrak{S}_n$ generated by elements $s_{2k+1},s_{2k+2},\ldots, s_{n-1}$. In \cite[\S 3.2]{N14} it has been show that each basis diagram $d\in D_{n}(N)$ with exactly $2k$ horizontal edges can be uniquely represented by a triple $(\omega_1,\omega_{(d)},\omega_2)$ with $\omega_1\in B_{k}^{*}, \omega_2\in B_{k}$ and $\omega_{(d)}\in \mathfrak{S}_{2k+1, n}$ such that $N^{k}d=\omega_1e_{(k)}\omega_{(d)}e_{(k)}\omega_2$ and $\ell(d)=\ell(\omega_1)+\ell(\omega_{(d)})+\ell(\omega_2)$. We call such a unique triple a {\em reduced expression} of $d$.

\begin{definition}{\rm (\cite[Definition 3.12]{N14})}
For each diagram $d$ of $D_{n}(N)$, we define a corresponding element $H_{d}$ in $\B_n(q,z)$ as follows: if $d$ has exactly $2k$ horizontal edges and $(\omega_1,\omega_{(d)},\omega_2)$ is a reduced expression of $d$ with $\omega_1\in B_{k}^{*}, \omega_2\in B_{k}$ and $\omega_{(d)}\in \mathfrak{S}_{2k+1, n}$, then we define $$H_d :=H_{\omega_1}e_{(k)}H_{\omega_{(d)}}H_{\omega_2}.$$ If the diagram $d$ has no horizontal edge, then $d$ is regarded as a permutation $\omega_{(d)}$ of $\mathfrak{S}_n$, and in this case, we define $H_d :=H_{\omega_{(d)}}$.
\end{definition}

Let $I_n$ denote the set of all basis diagrams of the Brauer algebra $D_{n}(N)$. The next proposition gives a standard basis of $\B_n(q,z)$ that is labeled by the basis diagrams of $D_{n}(N)$, which can be used to define a cellular structure on $\B_n(q,z)$.


\begin{proposition}{ \rm(\cite[Theorem 3.13]{N14})}\label{thmhdstandard}
The set $\{H_{d}\:|\:d\in I_n\}$ forms a basis of $\B_n(q,z)$ over $\Q(q,z)$.
\end{proposition}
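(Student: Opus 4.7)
The plan is to establish that $\{H_d \mid d \in I_n\}$ is a basis by combining a spanning argument with a dimension count. Since $\dim_{\Q(q,z)}\B_n(q,z) = (2n-1)!!$ by the preceding proposition, and $|I_n| = (2n-1)!!$ by the standard combinatorial count of partitions of a $2n$-element set into pairs, it suffices to prove spanning; linear independence will then follow for free from cardinality matching dimension.

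Let $M$ denote the $\Q(q,z)$-linear span of $\{H_d \mid d \in I_n\}$. The strategy is to show that $M$ is stable under left multiplication by each generator $H_1,\ldots,H_{n-1}$ and $\Qy$. Since $1 = H_d \in M$ for $d$ the identity diagram, stability will force $M = \B_n(q,z)$. Fix $d \in I_n$ with $2k$ horizontal edges and a reduced expression $(\omega_1, \omega_{(d)}, \omega_2)$, so that $H_d = H_{\omega_1}e_{(k)}H_{\omega_{(d)}}H_{\omega_2}$. To compute $H_i \cdot H_d$, one pushes $H_i$ through $H_{\omega_1}$ via the Hecke relations $(Q1)$--$(Q3)$: either $\ell(s_i\omega_1) > \ell(\omega_1)$, giving a new word $H_{s_i\omega_1}e_{(k)}H_{\omega_{(d)}}H_{\omega_2}$, or the quadratic relation $(Q1)$ produces a two-term expression. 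One then verifies that the resulting left factors lie in $B_k^*$ or brings them into that form using Lemma \ref{identiesti}(2)--(4), which absorb factors $H_{2j+1}$ into $e_{(k)}$ as scalars and rewrite $H_{2j-1}H_{2j}e_{(k)} = H_{2j+1}H_{2j}e_{(k)}$. The computation of $\Qy \cdot H_d$ proceeds similarly, using the inductive definition $e_{(k+1)} = \Qy H_{2,2k+1}^{+}H_{1,2k}^{-}e_{(k)}$ together with identities (5)--(7) of Lemma \ref{identiesti} to merge $\Qy$ with $H_{\omega_1}e_{(k)}$; depending on the structure of $\omega_1$, this step may raise $k$ to $k+1$ (increasing the number of horizontal edges) or leave $k$ unchanged.

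The principal obstacle is the bookkeeping: after a left multiplication the result need not immediately fit the reduced-triple form $H_{\omega_1'}e_{(k')}H_{\omega_{(d')}}H_{\omega_2'}$ with $\omega_1' \in B_{k'}^*$, $\omega_{(d')} \in \mathfrak{S}_{2k'+1,n}$, and $\omega_2' \in B_{k'}$. Rewriting it as a $\Q(q,z)$-linear combination of legitimate canonical forms requires a detailed case analysis driven by the position of $i$ relative to the horizontal-edge block of $e_{(k)}$ and by the coset decomposition $\mathfrak{S}_n = \mathfrak{S}_{n-1}\sqcup\bigsqcup_{r=1}^{n-1}s_{r,n-1}\mathfrak{S}_{n-1}$ underlying the definition \eqref{b_k8} of $B_k^*$. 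Induction on $n$ (and, within a fixed $n$, on the number of horizontal edges $2k$) helps organize the argument by reducing each new product to shorter or simpler reduced expressions already known to lie in $M$. Once stability under all generators is verified, the matching of $|I_n|$ with the rank of $\B_n(q,z)$ closes the argument.
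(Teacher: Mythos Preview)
The paper does not give its own proof of this proposition; it is simply quoted from \cite[Theorem~3.13]{N14}, so there is no in-paper argument to compare against. Your overall strategy---use the rank formula $(2n-1)!!$ to reduce to spanning, then show that the span $M$ of $\{H_d\}$ is a left ideal containing $1$---is sound and is essentially the route taken in the original sources \cite{We12a,N14}.

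That said, your sketch of closure under left multiplication by $e$ is where the real content lies, and your description does not engage with it. Writing $e\cdot H_d = e\,H_{\omega_1}e_{(k)}H_{\omega_{(d)}}H_{\omega_2}$, the difficulty is that $\omega_1 = t_{n-1}t_{n-2}\cdots t_{2k}t_{2k-2}\cdots t_2 \in B_k^*$ begins on the left with $t_{n-1}=s_{i_{n-1},n-1}$, whose first simple reflection can be $s_1$ or $s_2$; pushing $e$ past this creates expressions such as $eH_2H_3\cdots$ that must be reduced using $(Q6)$ and Lemma~\ref{identiesti}(5)--(7) in a way that may change $k$ and simultaneously disturb the $B_k^*$-form of the left factor. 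Your remark that ``this step may raise $k$ to $k+1$ or leave $k$ unchanged'' omits the mechanism by which the resulting word is brought back to the canonical triple form; in \cite{We12a} this is handled by a careful induction showing that $\B_n(q,z)$ is spanned by $\sum_k H_{B_k^*}e_{(k)}\Hy_{\mathfrak S_n}$, and the refinement to reduced triples is then carried out in \cite{N14}. Your proposed double induction on $n$ and $k$ is the right organizing principle, but as written it is a plan rather than a proof: the $e$-step needs an explicit reduction lemma (analogous to what later appears in this paper as Lemma~\ref{H_B_ke_k}) before the argument closes.
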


Let $\mathcal{D}_{k, n}^{*}$ be the set of all diagrams $d^*$ satisfying the following three properties:

(1) $d^*$ has exactly $k$ horizontal edges on each row,

(2) the bottom row of $d^*$ is the same as that of $e_{(k)}$,

(3) there is no crossing between any two vertical edges of $d^*$.
	
Set
\begin{align*}
B_{k, n}^{*} :=\{\omega\in B_{k}^{*}\:|\:d^{*}=\omega e_{(k)}\in \mathcal{D}_{k, n}^{*}\hbox{ and }\ell(d^{*})=\ell(\omega)\},
\end{align*}
and
\begin{align*}
B_{k, n} :=\{\omega^{-1}\:|\:\omega\in B_{k, n}^{*}\}.
\end{align*}
Note that $B_{k, n}^{*}$ has $\frac{n!}{2^k(n-2k)!k!}$ elements (cf. \cite[Remark 3.18(1)]{N14}).

The following lemma gives a decomposition of each element in $B_{k}^{*}$ in terms of $B_{k, n}^{*}$ and $\mathfrak{S}_{2k+1, n}$.
\begin{lemma}{\rm (\cite[Corollary 4.3 and Lemma 4.4]{N14})}\label{permutation unique}
Let $\sigma$ be a permutation of $B_{k}^{*}$. Then there exist unique elements $\omega'\in B_{k, n}^{*}$ and $\pi'\in \mathfrak{S}_{2k+1, n}$ such that $\sigma=\omega'\pi'$ and $\ell(\sigma)=\ell(\omega')+\ell(\pi')$. Similarly, for each element $\varrho\in B_{k}$, there exist unique elements $\tau'\in \mathfrak{S}_{2k+1, n}$ and $\varpi'\in B_{k, n}$ such that $\varrho=\tau'\varpi'$ and $\ell(\varrho)=\ell(\tau')+\ell(\varpi')$.
\end{lemma}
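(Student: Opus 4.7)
The plan is to prove the first statement by combining the standard parabolic coset decomposition in $\mathfrak{S}_n$ with a combinatorial characterization of $B_k^*$; the second statement will then follow by taking inverses.

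First, I would recall that $\mathfrak{S}_{2k+1,n}$ is the parabolic subgroup of $\mathfrak{S}_n$ generated by $\{s_{2k+1},\ldots,s_{n-1}\}$. Standard Coxeter theory guarantees that every $\sigma \in \mathfrak{S}_n$ admits a unique factorization $\sigma = \alpha\beta$ with $\beta \in \mathfrak{S}_{2k+1,n}$ and $\alpha$ the distinguished minimal-length representative of the left coset $\sigma\mathfrak{S}_{2k+1,n}$; this $\alpha$ is characterized by $\alpha(2k+1) < \alpha(2k+2) < \cdots < \alpha(n)$, and one has $\ell(\sigma) = \ell(\alpha) + \ell(\beta)$.

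Second, I would establish the following combinatorial characterization: an element $\omega \in \mathfrak{S}_n$ belongs to $B_k^*$ if and only if $\omega(2j) = \max\{\omega(1),\ldots,\omega(2j)\}$ for each $j = 1,\ldots,k$. To see this, unwind the canonical form $\omega = t_{n-1}\cdots t_1$. By iterating the decomposition $\mathfrak{S}_r = \mathfrak{S}_{r-1}\bigsqcup(\bigsqcup_{i=1}^{r-1}s_{i,r-1}\mathfrak{S}_{r-1})$ from $r = n$ downward, the partial product $\gamma_r := t_{n-1}\cdots t_r$ is the distinguished minimal-length representative of $\omega\mathfrak{S}_r$; in particular, $\gamma_r(1) < \cdots < \gamma_r(r)$ and $\gamma_r(i) = \omega(i)$ for $i > r$. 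Writing $\omega^{(2j-1)} := \gamma_{2j}^{-1}\omega \in \mathfrak{S}_{2j}$, the condition $t_{2j-1} = 1$ is equivalent to $\omega^{(2j-1)}(2j) = 2j$, which by the order-preserving property of $\gamma_{2j}^{-1}$ restricted to $\omega(\{1,\ldots,2j\}) = \gamma_{2j}(\{1,\ldots,2j\})$ is in turn equivalent to $\omega(2j)$ being the maximum element of $\omega(\{1,\ldots,2j\})$.

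With these two ingredients in place, the main argument becomes short. Given $\sigma \in B_k^*$, apply the parabolic decomposition to write $\sigma = \omega'\pi'$ with $\omega'$ the distinguished minimal-length representative and $\pi' \in \mathfrak{S}_{2k+1,n}$. Since $\pi'$ fixes $\{1,\ldots,2k\}$, one has $\omega'(i) = \sigma(i)$ for all $i \leq 2k$. The combinatorial characterization of $B_k^*$ involves only these values, so $\omega' \in B_k^*$; combined with the ordering $\omega'(2k+1) < \cdots < \omega'(n)$ inherited from minimality, this yields $\omega' \in B_{k,n}^*$. Existence and length additivity follow, while uniqueness is inherited from the uniqueness of the parabolic coset decomposition. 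The second statement then follows by applying the first to $\varrho^{-1} \in B_k^*$: writing $\varrho^{-1} = \omega'\pi'$ with $\omega' \in B_{k,n}^*$ and $\pi' \in \mathfrak{S}_{2k+1,n}$, we obtain $\varrho = (\pi')^{-1}(\omega')^{-1}$ with $(\pi')^{-1} \in \mathfrak{S}_{2k+1,n}$ and $(\omega')^{-1} \in B_{k,n}$, length additivity preserved since $\ell(w) = \ell(w^{-1})$. The main obstacle in this plan is the combinatorial characterization of $B_k^*$; once it is in place, the rest of the argument reduces to well-known Coxeter-theoretic facts about parabolic coset decompositions.
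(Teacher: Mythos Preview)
The paper does not supply its own proof of this lemma; it is simply quoted from \cite[Corollary~4.3 and Lemma~4.4]{N14}. So there is nothing in the paper to compare against, and the question reduces to whether your argument stands on its own.

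It does. Your combinatorial characterization of $B_k^*$ --- that $\omega\in B_k^*$ if and only if $\omega(2j)=\max\{\omega(1),\ldots,\omega(2j)\}$ for each $1\le j\le k$ --- is correct and is derived cleanly from the canonical factorization $\omega=t_{n-1}\cdots t_1$. One step you use without spelling it out is the identification
\[
B_{k,n}^* \;=\; \bigl\{\omega\in B_k^* : \omega(2k+1)<\omega(2k+2)<\cdots<\omega(n)\bigr\}.
\]
This is true, and it is exactly what makes your argument close: in the diagram $\omega e_{(k)}$ the vertical edges join the top vertex $\omega(j)$ to the bottom vertex $j'$ for $j>2k$, so condition~(3) in the definition of $\mathcal D_{k,n}^*$ (no crossings among vertical edges) is literally the minimal-coset-representative condition for $\mathfrak S_{2k+1,n}$, while the length condition $\ell(\omega e_{(k)})=\ell(\omega)$ is automatic for $\omega\in B_k^*$ by Lemma~\ref{B_ke_k}. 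It would be worth inserting one sentence to record this equivalence explicitly, since you also need its converse direction for the uniqueness claim (any $\omega''\in B_{k,n}^*$ is already a minimal coset representative, hence must equal $\omega'$). With that addition the proof is complete, and the deduction of the second statement by inversion is fine.
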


For each $0\leq k\leq \lfloor\frac{n}{2}\rfloor$, let $I_{k,n}$ denote the set of all diagrams in $I_n$ which has exactly $k$ horizontal edges both on the top and bottom rows. By \cite[Lemmas 4.1 and 4.7]{N14}, we obtain the following result.
\begin{lemma}\label{bijection}
There exists a bijection 
$\rho: B_{k, n}^{*}\times \mathfrak{S}_{2k+1, n}\times B_{k, n}\to I_{k,n}$. Under this bijection, if $(\omega_1,\omega_{(d)},\omega_2)\in B_{k, n}^{*}\times \mathfrak{S}_{2k+1, n}\times B_{k, n}$ and $d\in I_{k,n}$ are such that $\rho((\omega_1,\omega_{(d)},\omega_2))=d$, then we have $H_{\omega_1}e_{(k)}H_{\omega_{(d)}}H_{\omega_2}=H_d$, and moreover, $\ell(d)=\ell(\omega_1)+\ell(\omega_{(d)})+\ell(\omega_2)$.
\end{lemma}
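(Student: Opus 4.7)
The plan is to construct the bijection $\rho$ diagrammatically and then read off the algebraic identity from Definition 3.12 of $H_d$. First I would define $\rho(\omega_1, \omega_{(d)}, \omega_2)$ as the unique $d \in I_{k,n}$ satisfying $N^k d = \omega_1 e_{(k)} \omega_{(d)} e_{(k)} \omega_2$ in the Brauer algebra $D_n(N)$. The factor $N^k$ records the $k$ closed loops produced when the two copies of $e_{(k)}$ compose through a permutation $\omega_{(d)}$ supported on the $n-2k$ rightmost strands, so that the resulting diagram has exactly $k$ horizontal edges on each row.

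To show $\rho$ is injective, the triple can be recovered from $d$ as follows. The top-row horizontal pairing of $d$, together with the standard bottom of $e_{(k)}$, gives an element of $\mathcal{D}_{k,n}^*$; by the definition of $B_{k,n}^*$ and the no-crossing condition (3) in $\mathcal{D}_{k,n}^*$, the map $\omega \mapsto \omega e_{(k)}$ is a bijection $B_{k,n}^* \to \mathcal{D}_{k,n}^*$, so $\omega_1$ is uniquely determined; symmetrically $\omega_2 \in B_{k,n}$ is determined by the bottom of $d$. The residual bijection between the $n-2k$ free vertices on top and bottom is identified with a unique $\omega_{(d)} \in \mathfrak{S}_{2k+1, n}$. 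Surjectivity then follows from a cardinality count, using
\[
|B_{k,n}^*|^2 \, |\mathfrak{S}_{2k+1, n}| = \Big(\tbinom{n}{2k}(2k-1)!!\Big)^2 (n-2k)! = |I_{k,n}|.
\]

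For the algebraic identity $H_{\omega_1} e_{(k)} H_{\omega_{(d)}} H_{\omega_2} = H_d$, the triple $(\omega_1, \omega_{(d)}, \omega_2)$ is by construction a \emph{reduced expression} of $d$ in the sense of \S 3.2 of \cite{N14}, so the formula becomes the very definition of $H_d$ (Definition 3.12). The main obstacle will be verifying the length additivity $\ell(d) = \ell(\omega_1) + \ell(\omega_{(d)}) + \ell(\omega_2)$: one must show that no simple reflection cancels when the three permutations are composed. This rests on the structural form of $B_k^*$ in \eqref{b_k8} (each $t_j$ is either $1$ or an ascending block $s_{i_j, j}$) together with the no-crossing condition cutting $B_{k,n}^*$ out of $B_k^*$. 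These two constraints force the three pieces to act on essentially disjoint regions of the strand picture, and the resulting non-interference is precisely what is established in \cite[Lemma 4.7]{N14}, completing the argument.
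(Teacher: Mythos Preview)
Your proposal is correct and aligns with the paper's treatment: the paper does not give an independent proof but simply records that the result follows from \cite[Lemmas 4.1 and 4.7]{N14}. You have unpacked the same argument in more detail—constructing $\rho$ via $N^{k}d=\omega_1 e_{(k)}\omega_{(d)}e_{(k)}\omega_2$, recovering the triple from the top/bottom horizontal data and the residual permutation, and closing with the cardinality count—while likewise deferring the crucial length additivity to \cite[Lemma 4.7]{N14}. One small point of ordering: you invoke Definition 3.12 for $H_d$ before establishing that the triple is reduced, but since you immediately identify length additivity as the remaining obstacle and cite \cite{N14} for it, the logic is sound.
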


\section{A bar involution and canonical bases}
\label{sec:bar}
\subsection{A bar involution}
The following lemma provides an involutive anti-automorphism on $\B_n(q,z)$, which is necessary for establishing its cellularity.

\begin{lemma}{\rm (\cite[Proposition 3.14]{N14})}\label{involutive anti-automorphism}
The map $\jmath$ which is defined by
$$\jmath(e)=e\quad \mathrm{and}\quad\jmath(H_w)=H_{w^{-1}}~\mathrm{ for~each }~w\in \mathfrak{S}_n$$
can be uniquely extended to a $\Q(q,z)$-linear involutive anti-automorphism on $\B_n(q,z)$. Moreover, it satisfies that $\jmath(e_{(k)})=e_{(k)}$ for each $k$.
\end{lemma}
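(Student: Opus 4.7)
The plan is to define $\jmath$ on the generators by $\jmath(H_i)=H_i$ for each $1\leq i\leq n-1$ and $\jmath(\Qy)=\Qy$, and then verify that $\jmath$ extends to a $\Q(q,z)$-linear anti-homomorphism of $\B_n(q,z)$ by checking that each defining relation in Definition \ref{def:qB1} is mapped to a valid relation under $\jmath$ (with the order of products reversed). Most of these verifications are immediate: relations (Q1) and (Q4) are polynomial identities in a single generator, hence invariant under reversal; (Q3) is pure commutativity; and (Q2), (Q5)--(Q7) are either symmetric in the factors or already stated in a left/right symmetric form, so their reversed versions are the same relations. Once this is established, the involutivity $\jmath^2=\mathrm{id}$ is automatic, since $\jmath^2$ is a $\Q(q,z)$-algebra homomorphism that fixes the chosen set of generators.

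The one substantive point is relation (Q8), and this is where the main obstacle lies. The essential observation is that
\begin{equation*}
\jmath(\Qy_{(2)}) \;=\; \jmath(\Qy)\,\jmath(H_2^{-1})\,\jmath(H_1^{-1})\,\jmath(H_3)\,\jmath(H_2)\,\jmath(\Qy) \;=\; e\,(H_2^{-1} H_1^{-1} H_3 H_2)\,e,
\end{equation*}
which by Lemma \ref{identiesti}(1) coincides with $e_{(2)} = \Qy_{(2)}$ itself. Given this fixed-point property $\jmath(\Qy_{(2)}) = \Qy_{(2)}$, the reversed form of (Q8) becomes
\begin{equation*}
\Qy_{(2)}\cdot H_2^{-1}H_1^{-1}H_3 H_2 \;=\; \Qy_{(2)} \;=\; H_2^{-1}H_1^{-1}H_3 H_2\cdot \Qy_{(2)},
\end{equation*}
which I would derive by combining the original (Q8) with the alternate expression $e_{(2)} = e(H_2^{-1} H_1^{-1} H_3 H_2)e$ together with the braid and commutation relations (Q2)--(Q3) to rewrite the trailing product.

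For the final claim $\jmath(e_{(k)}) = e_{(k)}$, I would argue by induction on $k$, with the base case $k=1$ being just the definition $\jmath(e)=e$. For the inductive step, applying $\jmath$ to the recursive definition $e_{(k+1)} = e H_{2,2k+1}^+ H_{1,2k}^- e_{(k)}$ and using $\jmath(e_{(k)}) = e_{(k)}$ yields
\begin{equation*}
\jmath(e_{(k+1)}) \;=\; e_{(k)}\,H_{2k,1}^-\,H_{2k+1,2}^+\,e,
\end{equation*}
and the task is to show this equals $e_{(k+1)}$. The strategy is to iteratively apply identities (2)--(7) of Lemma \ref{identiesti} to migrate the factors $H_j^{\pm1}$ across the central $e_{(k)}$ and the outer $e$, converting each ``reversed'' half-braid expression into its forward counterpart one generator at a time. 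The bulk of the work is in this rewriting (a generalization of Lemma \ref{identiesti}(1) to arbitrary $k$), and this is essentially the content of \cite[Proposition 3.14]{N14}, which I would either follow in detail or invoke directly.
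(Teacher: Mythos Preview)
The paper does not give its own proof of this lemma; it simply quotes \cite[Proposition 3.14]{N14} and uses the result as a black box. So there is no in-paper argument to compare against, and your proposal---checking that the generator-fixing anti-map respects each relation (Q1)--(Q8), with (Q8) being the only nontrivial case, then inducting on $k$ for $\jmath(e_{(k)})=e_{(k)}$---is exactly the kind of direct verification one would expect, and indeed you yourself note at the end that you would follow or invoke \cite[Proposition 3.14]{N14}.

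One caution on the (Q8) step: your sketch says you will deduce the reversed relations $e_{(2)}H_2^{-1}H_1^{-1}H_3H_2=e_{(2)}=H_2^{-1}H_1^{-1}H_3H_2\,e_{(2)}$ from the original (Q8) together with Lemma~\ref{identiesti}(1) and braid/commutation moves. This is correct in spirit, but be aware that the paper itself later (in the proof of Lemma~\ref{abcdinvo20}) obtains these same identities \emph{by applying} $\jmath$ to (Q8), so you must be careful not to import that shortcut and create a circularity. The identities in Lemma~\ref{identiesti} are established in \cite{We12a,N14} without using $\jmath$, so relying on them is legitimate; you just need to carry out the rewriting honestly rather than appealing to the anti-automorphism you are constructing.
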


The following lemma provides an involutive automorphism $\overline{\cdot}$, called the bar involution, on $\B_n(q,z)$, which is necessary for constructing its canonical basis.
\begin{lemma}\label{abcdinvo20}
There is a unique involutive homomorphism $\overline{\cdot}$ on $\B_n(q,z)$ which is $\Q$-linear and satisfies $\overline q=q^{-1}, \overline{z}=z^{-1}, \overline{H_{i}}=H_{i}^{-1}$ and $\overline{e}=e$.
\end{lemma}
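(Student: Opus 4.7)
The plan is to invoke the presentation of $\B_n(q,z)$ given in Definition \ref{def:qB1}: in order to produce the $\Q$-algebra endomorphism $\overline{\cdot}$, it suffices to verify that the prescribed images of the generators (and of the scalars $q,z$) satisfy the bar-images of all eight defining relations (Q1)--(Q8). Uniqueness is then automatic since the map is prescribed on a generating set, and involutivity reduces to the observation that each generator is sent to an element whose bar returns the generator (e.g., $\overline{H_i^{-1}}$ is forced to equal $H_i$ once we know $\overline{\cdot}$ is a homomorphism and $\overline{H_i}=H_i^{-1}$).

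Most of the relations are straightforward. Relations (Q2), (Q3), (Q7) survive because inverting both sides of an equality of monomials in the $H_j$'s preserves any braid or commutation identity. Relation (Q1) is essentially self-inverse: (Q1) yields $H_i^{-1}=H_i+(q^{-1}-q)$, from which $(H_i^{-1}-q^{-1})(H_i^{-1}+q)=0$ follows by a direct expansion. For (Q4), the scalar $(z-z^{-1})/(q-q^{-1})$ is manifestly bar-invariant. For (Q5), multiplying $H_1e=qe=eH_1$ by $H_1^{-1}$ on the appropriate side produces the bar-version $H_1^{-1}e=q^{-1}e=eH_1^{-1}$. For (Q6), the substitution $H_2^{-1}=H_2-(q-q^{-1})$ together with (Q4) and (Q6) gives $eH_2^{-1}e = eH_2e - (q-q^{-1})e^2 = ze - (z-z^{-1})e = z^{-1}e$, which is exactly the bar-version of (Q6).

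The main obstacle is (Q8). Set $A:=H_2H_3H_1^{-1}H_2^{-1}$, so that $\Qy_{(2)}=eAe=e_{(2)}$ and $\overline A = H_2^{-1}H_3^{-1}H_1H_2$. First, Lemma \ref{identiesti}(1) provides $e\overline A e = e_{(2)} = \Qy_{(2)}$, so the bar preserves $\Qy_{(2)}$. What remains is to show $\overline A \cdot e_{(2)} = e_{(2)} = e_{(2)}\cdot \overline A$. For the left identity, Lemma \ref{identiesti}(4) with $j=1$, $k=2$ gives $H_1H_2e_{(2)}=H_3H_2e_{(2)}$, hence $H_3^{-1}H_1H_2e_{(2)}=H_2e_{(2)}$, and therefore $\overline A\cdot e_{(2)} = H_2^{-1}H_3^{-1}H_1H_2e_{(2)} = H_2^{-1}H_2 e_{(2)} = e_{(2)}$. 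The right identity $e_{(2)}\cdot \overline A = e_{(2)}$ follows by the symmetric argument using Lemma \ref{identiesti}(3). With all eight relations preserved, the universal property of the presentation yields the desired $\Q$-algebra homomorphism, which is involutive on generators and therefore on all of $\B_n(q,z)$.
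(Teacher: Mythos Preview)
Your argument is correct, and for the crux relation (Q8) it is in fact considerably shorter than the paper's. Both proofs begin by quoting Lemma~\ref{identiesti}(1) to identify $e\,\overline{A}\,e$ with $e_{(2)}$, where $A=H_2H_3H_1^{-1}H_2^{-1}$, so that the bar-image of (Q8) becomes the pair of identities $\overline{A}\,e_{(2)}=e_{(2)}=e_{(2)}\,\overline{A}$. At this point the paper takes a detour: it uses the anti-involution $\jmath$ to convert one half of (Q8) into $e_{(2)}H_2^{-1}H_1^{-1}H_3H_2=e_{(2)}$, and then spends a page of direct Hecke-type expansions to prove that $e_{(2)}H_2^{-1}H_3^{-1}H_1H_2$ equals $e_{(2)}H_2^{-1}H_1^{-1}H_3H_2$ (equivalently, verifying the auxiliary identity \eqref{eabc}). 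You bypass this computation entirely by applying Lemma~\ref{identiesti}(3) and (4) with $j=1$, $k=2$: from $H_1H_2e_{(2)}=H_3H_2e_{(2)}$ one gets $\overline{A}\,e_{(2)}=H_2^{-1}H_2e_{(2)}=e_{(2)}$ immediately, and the inverse form of (3) gives $e_{(2)}H_2^{-1}H_3^{-1}=e_{(2)}H_2^{-1}H_1^{-1}$, hence $e_{(2)}\,\overline{A}=e_{(2)}$. This is legitimate because Lemma~\ref{identiesti} is proved in \cite{We12a,N14,N18} without any reference to the bar involution, so you may freely use it when checking that the bar-images of the defining relations hold in $\B_n(q,z)$. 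Your treatment of (Q1)--(Q7), uniqueness, and involutivity is also fine and matches the paper's one-line dismissal of those cases.
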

\begin{proof}
It is easy to check that the homomorphism $\overline{\cdot}$ preserves the relations except (Q8) in Definition \ref{def:qB1}. Thus, if suffices to prove that
\begin{align}\label{2-1}e(H_{2}^{-1}H_{3}^{-1}H_{1}H_{2})e=e(H_{2}^{-1}H_{3}^{-1}H_{1}H_{2})eH_{2}^{-1}H_{3}^{-1}H_{1}H_{2},\end{align}
and
$$e(H_{2}^{-1}H_{3}^{-1}H_{1}H_{2})e=H_{2}^{-1}H_{3}^{-1}H_{1}H_{2}e(H_{2}^{-1}H_{3}^{-1}H_{1}H_{2})e.$$
We only prove \eqref{2-1}, and the second one can be proved similarly.

Since $H_{2}H_{3}H_{1}^{-1}H_{2}^{-1}\Qy_{(2)}=\Qy_{(2)}$, by Lemma \ref{involutive anti-automorphism} we have
$$e(H_{2}^{-1}H_{1}^{-1}H_{3}H_{2})e=e(H_{2}^{-1}H_{1}^{-1}H_{3}H_{2})eH_{2}^{-1}H_{1}^{-1}H_{3}H_{2}.$$
By Lemma \ref{identiesti}(1), we have $$e_{(2)}=e(H_{2}^{-1}H_{1}^{-1}H_{3}H_{2})e=e(H_{2}^{-1}H_{3}^{-1}H_{1}H_{2})e.$$ In order to prove \eqref{2-1}, it suffices to show that
\begin{align}\label{2-2}e(H_{2}^{-1}H_{3}^{-1}H_{1}H_{2})eH_{2}^{-1}H_{3}^{-1}H_{1}H_{2}=e(H_{2}^{-1}H_{3}^{-1}H_{1}H_{2})eH_{2}^{-1}H_{1}^{-1}H_{3}H_{2}.
\end{align}
We have
\begin{align*}
&e(H_{2}^{-1}H_{3}^{-1}H_{1}H_{2})eH_{2}^{-1}H_{3}^{-1}H_{1}H_{2}\\
=&e_{(2)}H_{2}^{-1}(H_3+(q^{-1}-q))(H_1^{-1}+(q-q^{-1}))H_{2}\\
=&e_{(2)}H_{2}^{-1}(H_1^{-1}H_3+(q-q^{-1})H_3+(q^{-1}-q)H_1^{-1}+(q^{-1}-q)(q-q^{-1}))H_2\\
=&e_{(2)}H_{2}^{-1}H_1^{-1}H_3H_2+(q-q^{-1})e_{(2)}H_{2}^{-1}H_3H_2+
\\&\hspace{5cm}(q^{-1}-q)e_{(2)}H_{2}^{-1}H_1^{-1}H_2-(q-q^{-1})^{2}e_{(2)}\\
=&e_{(2)}H_{2}^{-1}H_1^{-1}H_3H_2+(q-q^{-1})e_{(2)}H_3H_2H_{3}^{-1}+(q^{-1}-q)e_{(2)}H_1H_{2}^{-1}H_1^{-1}
\\&\hspace{5cm}-(q-q^{-1})^{2}e_{(2)}.
\end{align*}
Therefore, in order to prove \eqref{2-2} it suffices to show that
\begin{align}\label{eabc}
(q-q^{-1})e_{(2)}H_3H_2H_{3}^{-1}+(q^{-1}-q)e_{(2)}H_1H_{2}^{-1}H_1^{-1}-(q-q^{-1})^{2}e_{(2)}=0.
\end{align}

By Lemma \ref{identiesti}(2), we have $e_{(2)}H_3=e_{(2)}H_1=qe_{(2)}$. By Lemma \ref{identiesti}(3), we have $e_{(2)}H_2H_{3}=e_{(2)}H_2H_1$. Therefore we have
\begin{align*}
&(q-q^{-1})e_{(2)}H_3H_2H_{3}^{-1}+(q^{-1}-q)e_{(2)}H_1H_{2}^{-1}H_1^{-1}-(q-q^{-1})^{2}e_{(2)}\\
=&(q^{2}-1)e_{(2)}H_2H_{3}^{-1}+(1-q^{2})e_{(2)}H_{2}^{-1}H_1^{-1}-(q-q^{-1})^{2}e_{(2)}\\
=&(q^{2}-1)e_{(2)}H_2(H_{3}+(q^{-1}-q))+(1-q^{2})e_{(2)}(H_2+(q^{-1}-q))(H_1+(q^{-1}-q))\\
&-(q-q^{-1})^{2}e_{(2)}\\
=&(q^{2}-1)e_{(2)}H_2H_{3}-q^{-1}(1-q^{2})^{2}e_{(2)}H_2+(1-q^{2})e_{(2)}H_2H_1+q^{-1}(1-q^{2})^{2}e_{(2)}H_2\\
&+(1-q^{2})(q^{-1}-q)e_{(2)}H_1+(1-q^{2})(q^{-1}-q)^{2}e_{(2)}-(q-q^{-1})^{2}e_{(2)}\\
=&0.
\end{align*}
Thus, \eqref{eabc} holds and we are done.
\end{proof}

\begin{lemma}
For each $1\leq k\leq \lfloor\frac{n}{2}\rfloor$, we have $\overline{e_{(k)}}=e_{(k)}$.
\end{lemma}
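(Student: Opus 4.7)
The proof proceeds by induction on $k$. The base case $k=1$ is immediate: $e_{(1)} = e$, and $\overline{e} = e$ by the definition of the bar involution in Lemma \ref{abcdinvo20}.

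For the inductive step, assume $\overline{e_{(k)}} = e_{(k)}$. Since the bar involution is a ring homomorphism satisfying $\overline{H_i} = H_i^{-1}$ and $\overline{e} = e$, applying it to the defining recursion $e_{(k+1)} = e\, H_{2,2k+1}^{+} H_{1,2k}^{-}\, e_{(k)}$ and using the inductive hypothesis yields
$$\overline{e_{(k+1)}} \;=\; e\, H_{2,2k+1}^{-} H_{1,2k}^{+}\, e_{(k)}.$$
Hence it suffices to establish the key identity
$$e\, H_{2,2k+1}^{+} H_{1,2k}^{-}\, e_{(k)} \;=\; e\, H_{2,2k+1}^{-} H_{1,2k}^{+}\, e_{(k)}. \qquad (\star)$$
Note that $(\frac{z-z^{-1}}{q-q^{-1}})$ is bar-invariant, so scalar factors arising from Lemma \ref{identiesti}(5)-(6) will not obstruct the argument.

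For $k=1$, identity $(\star)$ is exactly Lemma \ref{identiesti}(1), which is the same identity that powers the proof of Lemma \ref{abcdinvo20}. For $k \geq 2$, the plan is to reduce $(\star)$ to its $k=1$ incarnation by the following strategy. Using Lemma \ref{identiesti}(5) with $j = 2$ (valid when $k \geq 2$) together with the bar-invariance of $e_{(2)}$ (obtained from the $k=2$ case just verified), rewrite
$$\Bigl(\tfrac{z-z^{-1}}{q-q^{-1}}\Bigr)\, e_{(k+1)} = e_{(2)}\, H_{4,2k+1}^{+} H_{3,2k}^{-}\, e_{(k)},$$
and reduce $(\star)$ to the "shifted" analog
$$e_{(2)}\, H_{4,2k+1}^{+} H_{3,2k}^{-}\, e_{(k)} = e_{(2)}\, H_{4,2k+1}^{-} H_{3,2k}^{+}\, e_{(k)}.$$
This shifted identity can be attacked by expanding on one side via $H_i = H_i^{-1} + (q - q^{-1})$, in parallel to the calculation for $e_{(2)}$ in Lemma \ref{abcdinvo20}. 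The identities (2)--(4) and (8) of Lemma \ref{identiesti} then push odd-indexed factors $H_{2j+1}^{\pm}$ (for $j < k$) past $e_{(k)}$ via $H_{2j+1}^{\pm} e_{(k)} = q^{\pm 1} e_{(k)}$, and re-index adjacent pairs $H_{2j-1}^{\pm} H_{2j}^{\pm}$; the remaining discrepancies collapse using braid-commutativity and the defining relation $e H_2 e = z e$.

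The main obstacle is the bookkeeping in this reduction. Each individual step is a routine application of an identity already established, but the number of factors grows linearly with $k$, and one must apply Lemma \ref{identiesti} carefully (several identities there carry the boundary condition $j < k$, so they fail near the right end of the product). Executing the reduction cleanly may require an additional induction on an auxiliary index that tracks how far the simplification has progressed from the $e_{(k)}$ on the right; schematically, one successively converts the leftmost "$+$" factors to "$-$" factors and the rightmost "$-$" factors to "$+$" factors, matching the pattern of the $k=1$ case at each step.
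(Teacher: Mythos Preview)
Your proposal has a genuine gap: the inductive step is not carried out. You correctly reduce to the identity $(\star)$, but for $k\ge 2$ you only sketch a strategy (``expand via $H_i=H_i^{-1}+(q-q^{-1})$, push odd-indexed factors through, collapse the remainder'') and then concede that ``the main obstacle is the bookkeeping'' and that it ``may require an additional induction on an auxiliary index.'' That auxiliary induction is never formulated, let alone executed, and the products $H_{2,2k+1}^{\pm}H_{1,2k}^{\mp}$ have $4k$ factors, so the cancellation pattern you allude to is not self-evident. A referee would not accept this as a proof.

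The paper avoids exactly this difficulty by first replacing the defining recursion with a \emph{symmetric} one involving only four $H$-factors:
\[
e_{(l+1)}=\Bigl(\tfrac{q-q^{-1}}{z-z^{-1}}\Bigr)^{l-1} e_{(l)}\,H_{2l}H_{2l+1}H_{2l-1}^{-1}H_{2l}^{-1}\,e_{(l)},
\]
established by its own short induction using Lemma~\ref{identiesti}(5),(6),(8). This formula (essentially the $j=l$ extension of Lemma~\ref{identiesti}(5), which is only stated for $j<k$) is the missing idea in your approach. Applying the bar involution to it gives one expression for $\overline{e_{(k+1)}}$; applying the anti-involution $\jmath$ of Lemma~\ref{involutive anti-automorphism} gives a second expression for $e_{(k+1)}$ itself. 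The comparison then boils down to a single local identity
\[
e_{(k)}H_{2k}^{-1}H_{2k+1}^{-1}H_{2k-1}H_{2k}e_{(k)}=e_{(k)}H_{2k}^{-1}H_{2k+1}H_{2k-1}^{-1}H_{2k}e_{(k)},
\]
which is checked by exactly the kind of four-term expansion you describe, but now with a fixed number of factors independent of $k$. The use of $\jmath$ is what lets the paper compare ``bar of the recursion'' with ``anti-involution of the recursion'' rather than with the recursion itself; this sidesteps the $O(k)$-factor computation your plan runs into.
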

\begin{proof}
We first prove that
\begin{align}\label{ek+1}
e_{(l+1)}=\Big(\frac{q-q^{-1}}{z-z^{-1}}\Big)^{l-1}e_{(l)}H_{2l}H_{2l+1}H_{2l-1}^{-1}H_{2l}^{-1}e_{(l)}
\end{align}
for $l\geq 1$. We shall prove \eqref{ek+1} by induction on $l$. When $l=1$, \eqref{ek+1} holds by definition. We assume \eqref{ek+1} holds for $l-1$. By Lemma \ref{identiesti}(5) we have
\begin{align}\label{ek+2}
e_{(l+1)}=\Big(\frac{q-q^{-1}}{z-z^{-1}}\Big)^{l-2}e_{(l-1)}H_{2l-2}H_{2l-1}H_{2l}H_{2l+1}H_{2l-3}^{-1}H_{2l-2}^{-1}H_{2l-1}^{-1}H_{2l}^{-1}e_{(l)}.
\end{align}
By Lemma \ref{identiesti}(8) we have $e_{(i)}H_{j}=H_{j}e_{(i)}$ for $j\geq 2i+1$. Moreover, by Lemma \ref{identiesti}(6) we have
$e_{(l)}=\big(\frac{q-q^{-1}}{z-z^{-1}}\big)^{l-1}e_{(l-1)}e_{(l)}$. Therefore, by \eqref{ek+2} and the assumption that \eqref{ek+1} holds for $l-1$, we have
\begin{align*}
&e_{(l+1)}\\
=&\Big(\frac{q-q^{-1}}{z-z^{-1}}\Big)^{l-1}\Big(\frac{q-q^{-1}}{z-z^{-1}}\Big)^{l-2}e_{(l-1)}H_{2l-2}
H_{2l-1}H_{2l-3}^{-1}H_{2l-2}^{-1}e_{(l-1)}\\
&\times H_{2l}H_{2l+1}H_{2l-1}^{-1}H_{2l}^{-1}e_{(l)}\\
=&\Big(\frac{q-q^{-1}}{z-z^{-1}}\Big)^{l-1}e_{(l)}H_{2l}H_{2l+1}H_{2l-1}^{-1}H_{2l}^{-1}e_{(l)}.
\end{align*}

Next we prove the lemma by induction on $k$. By Lemma \ref{abcdinvo20} and Lemma \ref{identiesti}(1), we have $\overline{e_{(1)}}=e_{(1)}$and $\overline{e_{(2)}}=e_{(2)}$, that is, the lemma holds for $k=1, 2$. We assume that it is true for $k$ and want to show that $\overline{e_{(k+1)}}=e_{(k+1)}$. By \eqref{ek+1} we have
\begin{align*}
\overline{e_{(k+1)}}=\Big(\frac{q-q^{-1}}{z-z^{-1}}\Big)^{k-1}e_{(k)}H_{2k}^{-1}H_{2k+1}^{-1}H_{2k-1}H_{2k}e_{(k)}.
\end{align*}
By Lemma \ref{involutive anti-automorphism} and \eqref{ek+1} we have
\begin{align*}
e_{(k+1)}=\jmath(e_{(k+1)})=\Big(\frac{q-q^{-1}}{z-z^{-1}}\Big)^{k-1}e_{(k)}H_{2k}^{-1}H_{2k-1}^{-1}H_{2k+1}H_{2k}e_{(k)}.
\end{align*}
Therefore, in order to prove that $\overline{e_{(k+1)}}=e_{(k+1)}$, it suffices to show that 
\begin{align}\label{habc}
e_{(k)}H_{2k}^{-1}H_{2k+1}^{-1}H_{2k-1}H_{2k}e_{(k)}=e_{(k)}H_{2k}^{-1}H_{2k+1}H_{2k-1}^{-1}H_{2k}e_{(k)}.
\end{align}
We have
\begin{align*}
&e_{(k)}H_{2k}^{-1}H_{2k+1}^{-1}H_{2k-1}H_{2k}e_{(k)}\\
=&e_{(k)}H_{2k}^{-1}(H_{2k+1}+(q^{-1}-q))(H_{2k-1}^{-1}+(q-q^{-1}))H_{2k}e_{(k)}\\
=&e_{(k)}H_{2k}^{-1}H_{2k+1}H_{2k-1}^{-1}H_{2k}e_{(k)}+(q-q^{-1})e_{(k)}H_{2k}^{-1}H_{2k+1}H_{2k}e_{(k)}\\
&+(q^{-1}-q))e_{(k)}H_{2k}^{-1}H_{2k-1}^{-1}H_{2k}e_{(k)}-(q-q^{-1})^{2}e_{(k)}^{2}\\
=&e_{(k)}H_{2k}^{-1}H_{2k+1}H_{2k-1}^{-1}H_{2k}e_{(k)}+(q-q^{-1})e_{(k)}H_{2k+1}H_{2k}H_{2k+1}^{-1}e_{(k)}\\
&+(q^{-1}-q))e_{(k)}H_{2k-1}H_{2k}^{-1}H_{2k-1}^{-1}e_{(k)}-(q-q^{-1})^{2}e_{(k)}^{2}.
\end{align*}
By Lemma \ref{identiesti}(7) and (8), we have
\begin{align*}
e_{(k)}H_{2k+1}H_{2k}H_{2k+1}^{-1}e_{(k)}=&H_{2k+1}e_{(k)}H_{2k}e_{(k)}H_{2k+1}^{-1}\\
=&z\Big(\frac{z-z^{-1}}{q-q^{-1}}\Big)^{k-1}H_{2k+1}e_{(k)}H_{2k+1}^{-1}\\
=&e_{(k)}H_{2k}e_{(k)}.
\end{align*}
By Lemma \ref{identiesti}(2), we have $$e_{(k)}H_{2k-1}H_{2k}^{-1}H_{2k-1}^{-1}e_{(k)}=e_{(k)}H_{2k}^{-1}e_{(k)}.$$ Therefore, we have
\begin{align*}
&e_{(k)}H_{2k}^{-1}H_{2k+1}^{-1}H_{2k-1}H_{2k}e_{(k)}\\
=&e_{(k)}H_{2k}^{-1}H_{2k+1}H_{2k-1}^{-1}H_{2k}e_{(k)}+(q-q^{-1})e_{(k)}(H_{2k}-H_{2k}^{-1})e_{(k)}-(q-q^{-1})^{2}e_{(k)}^{2}\\
=&e_{(k)}H_{2k}^{-1}H_{2k+1}H_{2k-1}^{-1}H_{2k}e_{(k)}.
\end{align*}
Thus, \eqref{habc} holds and we are done.
\end{proof}

\subsection{Canonical bases}
In this subsection we shall construct a Kazhdan-Lusztig-type basis on the $q$-Brauer algebra $\B_n(q,z)$.
\begin{lemma}{\em(\cite[Lemma 1.2(a)]{We12a})}\label{B_ke_k}
For any $w\in \mathfrak{S}_n$ and $1\leq k\leq \lfloor\frac{n}{2}\rfloor$, there exists a unique element $\sigma\in B_{k}^{*}$ such that $we_{(k)}=\sigma e_{(k)}$ and $\ell(\sigma e_{(k)})=\ell(\sigma)\leq \ell(w)$.
\end{lemma}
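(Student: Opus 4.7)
The plan is to reinterpret the equation $we_{(k)} = \sigma e_{(k)}$ as a statement about left cosets of the stabiliser $H := \{h \in \mathfrak{S}_n : he_{(k)} = e_{(k)}\}$. First I would identify $H$ by direct inspection of the Brauer diagram $e_{(k)}$: a permutation $h$ stabilises $e_{(k)}$ if and only if it preserves the set of top horizontal pairs $\{\{1,2\},\{3,4\},\ldots,\{2k-1,2k\}\}$ and fixes every vertex $j$ with $j>2k$. Thus $H$ is the hyperoctahedral group of rank $k$ embedded in $\mathfrak{S}_n$ in this way, of order $2^k k!$, and $we_{(k)} = \sigma e_{(k)}$ precisely when $\sigma \in wH$.

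Next, I would show that $B_k^*$ is a complete transversal for $\mathfrak{S}_n/H$. Counting gives $|B_k^*| = \frac{n!}{2^k k!} = [\mathfrak{S}_n : H]$, so it suffices to verify that distinct elements of $B_k^*$ lie in distinct cosets. This follows from the uniqueness of the normal form $w = t_{n-1}\cdots t_1$ with $\ell(w) = \sum_j \ell(t_j)$ recalled just before \eqref{b_k8}: starting from such a $w$, one constructs $\sigma \in B_k^*$ by successively absorbing the odd-indexed factors $t_1, t_3, \ldots, t_{2k-1}$ into $H$, and the resulting map $\mathfrak{S}_n \to B_k^*$ has fibres equal to the cosets $\sigma H$. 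Existence and uniqueness of $\sigma$ in the lemma then follow.

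The key analytic point is the length inequality $\ell(\sigma) \le \ell(w)$. I plan to prove it by iteratively absorbing the odd-indexed factors into $H$ from the right. For $j=1$, the factor $t_1 \in \{1, s_1\}$ already lies in $H$; deleting it yields an element of $wH$ of length $\ell(w) - \ell(t_1) \le \ell(w)$. For larger odd $j < 2k$, the factor $t_j = s_{i_j,j}$ is not itself in $H$, but I would use the braid and commutation relations together with the fact that each $s_{2i-1}$ lies in $H$ to push the offending simple reflections past the remaining even-indexed factors without increasing length. Iterating, one lands in an element of $B_k^*$ of total length no larger than $\ell(w)$; by the uniqueness from the previous paragraph this element must coincide with $\sigma$.

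Finally, for the equality $\ell(\sigma e_{(k)}) = \ell(\sigma)$, the inequality $\le$ is the trivial factorisation $\omega_1 = \sigma$, $\omega_2 = 1$. For the reverse, given any $\sigma e_{(k)} = \omega_1 e_{(k)} \omega_2$, reading off the top row of the underlying Brauer diagram forces $\omega_1 \in \sigma H$, while reading off the bottom row together with the vertical strands forces $\omega_2 \in H$. Invoking the minimality $\ell(\omega_1) \ge \ell(\sigma)$ from the previous step gives $\ell(\omega_1) + \ell(\omega_2) \ge \ell(\sigma)$. I expect the main obstacle to lie in the inductive absorption argument of the third paragraph: one must track how the Coxeter relations interact with the alternating-index structure of $B_k^*$ to ensure that the length is monotonically controlled at every reduction step.
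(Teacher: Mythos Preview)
Your coset-theoretic framing via the stabiliser $H$ is sound and genuinely different from the argument the paper records. The paper, following Wenzl, does not argue through $H$ at all; it constructs $\sigma$ directly from the diagram $d=we_{(k)}$. One first strips off $t_{n-1},\ldots,t_{2k}$ using the normal form for $\mathfrak{S}_n$ to reduce to a diagram in $\mathfrak{S}_{2k}e_{(k)}$, and then determines $t_{2k-2},t_{2k-4},\ldots,t_2$ one at a time by reading off, at each stage, which top vertex is joined to the rightmost still-unmatched top vertex. This is purely diagrammatic: uniqueness is immediate because each $t_j$ is dictated by the diagram, and the length bound holds because every reduction step uses only relations $(S_1)$--$(S_3)$, $(3)$ and $(5)$ of Definition~\ref{brau}, none of which lengthens the word. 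Your ``absorption'' argument is the algebraic shadow of exactly this procedure; the diagrammatic reading is what lets the paper execute it without the Coxeter case analysis you anticipate as the main obstacle.

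Your final paragraph, however, contains a genuine error. The claim that $\sigma e_{(k)}=\omega_1 e_{(k)}\omega_2$ forces $\omega_2\in H$ (and likewise $\omega_1\in\sigma H$) is false: for $n=4$, $k=1$, $\sigma=1$ one has $e_{(1)}=s_3\, e_{(1)}\, s_3$ since $s_3$ commutes with $e_{(1)}$, yet $s_3\notin H=\{1,s_1\}$. What the bottom row and vertical strands actually force is only $\omega_2\in H\times\mathfrak{S}_{2k+1,n}$. Writing $\omega_2=h\pi$ with $h\in H$ and $\pi\in\mathfrak{S}_{2k+1,n}$ (so $\ell(\omega_2)=\ell(h)+\ell(\pi)$), and using $e_{(k)}h=e_{(k)}$ together with $\pi e_{(k)}=e_{(k)}\pi$, one obtains $\sigma e_{(k)}=(\omega_1\pi)e_{(k)}$, whence $\omega_1\pi\in\sigma H$ and
\[
\ell(\omega_1)+\ell(\omega_2)\ \ge\ \ell(\omega_1)+\ell(\pi)\ \ge\ \ell(\omega_1\pi)\ \ge\ \ell(\sigma),
\]
the last inequality being the minimality from your third paragraph. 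So the strategy is repairable, but not as written.
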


In fact, the element $\sigma\in B_{k}^{*}$ in Lemma \ref{B_ke_k} can be constructed as follows (refer to the proof of \cite[Lemma 1.2(a)]{We12a}). We set $d=we_{(k)}$. Using exactly the same arguments as those before \eqref{b_k8}, we see that there exist unique elements $t_{n-1}, t_{n-2},\ldots, t_{2k}$ such that $d'=(t_{n-1}t_{n-2}\cdots t_{2k})^{-1}d$ is a diagram in $\mathfrak{S}_{2k}e_{(k)}$, and moreover, $\ell(t_{n-1}t_{n-2}\cdots t_{2k}y)=\ell(t_{n-1})+\ell(t_{n-2})+\cdots +\ell(t_{2k})+\ell(y)$ for any $y\in \mathfrak{S}_{2k}$. Let $i_{2k-2}$ be the label of the vertex of $d'$ which is connected with the $2k$-th vertex on the top row. If $i_{2k-2}=2k-1$, we set $t_{2k-2}=1$; if $i_{2k-2}\leq 2k-2$, then we set $t_{2k-2}=s_{i_{2k-2}, 2k-2}$. Then in the diagram $d''=t_{2k-2}^{-1}d'$, the $(2k-1)$-st and $2k$-th vertices on the top row are connected by a horizontal edge. Proceeding in this way, we see that there exist some elements $t_{2k-2}, t_{2k-4},\ldots,t_{2}$ such that $e_{(k)}=t_{2}^{-1}\cdots t_{2k-4}^{-1}t_{2k-2}^{-1}d'$, that is, $d'=t_{2k-2}t_{2k-4}\cdots t_{2}e_{(k)}$. Set $$\sigma :=t_{n-1}t_{n-2}\cdots t_{2k}t_{2k-2}t_{2k-4}\cdots t_{2}.$$ Then $\sigma$ is just the required element in Lemma \ref{B_ke_k}, that is, $\sigma\in B_{k}^{*}$ is such that $we_{(k)}=\sigma e_{(k)}$ and $\ell(\sigma e_{(k)})=\ell(\sigma)\leq \ell(w)$. From the above process, we see that the choices of the elements $t_{2k-2},t_{2k-4},\ldots,t_{2}$ depend only on the defining relations $(S_1)$-$(S_3)$, $(3)$ and $(5)$ in Definition \ref{brau} (refer to the last paragraph on \cite[p. 1385]{N14}).

In an analogous way, by using the corresponding relations $(Q1)$-$(Q3)$ on the generators $H_i$ in Definition \ref{def:qB1} as well as two relations (2) and (4) in Lemma \ref{identiesti}, we see that the element $H_{w}e_{(k)}$ transforms into the form $$\sum\limits_{\substack{\sigma'\in B_{k}^{*}\\\ell(\sigma')\leq\ell(w)}}r_{\sigma', w}H_{\sigma'}e_{(k)}$$
for some $r_{\sigma', w}\in \mathbb{Z}[q, q^{-1}]$ (refer to the proof of \cite[Lemma 4.10]{N14}).

Let us look at an example.
\begin{example}
Fix $n=7$ and $k=3$. Assume that $w=s_{6}s_{1, 5}s_{2, 4}s_{2}\in \mathfrak{S}_7$.

We set $t_{6}=s_{6}$. Then $t_{6}^{-1}we_{(3)}\in \mathfrak{S}_{6}e_{(3)}$ and
\[
\begin{tikzpicture}
    \node (0) at (-2,0) {$t_{6}^{-1}we_{(3)}=$};
    \node (u1) [fill,circle,label=above:{$1$},scale=0.5] at (0,1){};
    \node (u2) [fill,circle,label=above:{$2$},scale=0.5] at (1.5,1){};
    \node (u3) [fill,circle,label=above:{$3$},scale=0.5] at (3,1){};
    \node (u4) [fill,circle,label=above:{$4$},scale=0.5] at (4.5,1){};
    \node (u5) [fill,circle,label=above:{$5$},scale=0.5] at (6,1){};
    \node (u6) [fill,circle,label=above:{$6$},scale=0.5] at (7.5,1){};
    \node (u7) [fill,circle,label=above:{$7$},scale=0.5] at (9,1){};
     \node (d1) [fill,circle,label=below:{$1'$},scale=0.5] at (0,-1){};
    \node (d2) [fill,circle,label=below:{$2'$},scale=0.5] at (1.5,-1){};
    \node (d3) [fill,circle,label=below:{$3'$},scale=0.5] at (3,-1){};
    \node (d4) [fill,circle,label=below:{$4'$},scale=0.5] at (4.5,-1){};
    \node (d5) [fill,circle,label=below:{$5'$},scale=0.5] at (6,-1){};
    \node (d6) [fill,circle,label=below:{$6'$},scale=0.5] at (7.5,-1){};
    \node (d7) [fill,circle,label=below:{$7'$},scale=0.5] at (9,-1){};
\path
    (u1) edge [bend left] (u3)
    (u2) edge [bend left] (u5)
    (d6) edge (d5)
    (u4) edge [bend left] (u6)
    (d1) edge (d2)
    (d3) edge (d4)
    (u7) edge (d7);
\end{tikzpicture}
\]
In the diagram $t_{6}^{-1}we_{(3)}$, we see that the label of its vertex which is connected with the $6$-th vertex on the top row is $4$. Thus, we set $t_{4}=s_{4, 4}=s_{4}$. Then we have
\[
\begin{tikzpicture}
    \node (0) at (-2,0) {$t_{4}^{-1}t_{6}^{-1}we_{(3)}=$};
    \node (u1) [fill,circle,label=above:{$1$},scale=0.5] at (0,1){};
    \node (u2) [fill,circle,label=above:{$2$},scale=0.5] at (1.5,1){};
    \node (u3) [fill,circle,label=above:{$3$},scale=0.5] at (3,1){};
    \node (u4) [fill,circle,label=above:{$4$},scale=0.5] at (4.5,1){};
    \node (u5) [fill,circle,label=above:{$5$},scale=0.5] at (6,1){};
    \node (u6) [fill,circle,label=above:{$6$},scale=0.5] at (7.5,1){};
    \node (u7) [fill,circle,label=above:{$7$},scale=0.5] at (9,1){};
     \node (d1) [fill,circle,label=below:{$1'$},scale=0.5] at (0,-1){};
    \node (d2) [fill,circle,label=below:{$2'$},scale=0.5] at (1.5,-1){};
    \node (d3) [fill,circle,label=below:{$3'$},scale=0.5] at (3,-1){};
    \node (d4) [fill,circle,label=below:{$4'$},scale=0.5] at (4.5,-1){};
    \node (d5) [fill,circle,label=below:{$5'$},scale=0.5] at (6,-1){};
    \node (d6) [fill,circle,label=below:{$6'$},scale=0.5] at (7.5,-1){};
    \node (d7) [fill,circle,label=below:{$7'$},scale=0.5] at (9,-1){};
\path
    (u1) edge [bend left] (u3)
    (u2) edge [bend left] (u4)
    (d6) edge (d5)
    (u5) edge (u6)
    (d1) edge (d2)
    (d3) edge (d4)
    (u7) edge (d7);
\end{tikzpicture}
\]
In the diagram $t_{4}^{-1}t_{6}^{-1}we_{(3)}$, the $5$-th and $6$-th vertices on the top row are connected by a horizontal edge and the label of its vertex which is connected with the $4$-th vertex on the top row is $2$. Thus we set $t_{2}=s_{2, 2}=s_{2}$. Then we have
\[
\begin{tikzpicture}
    \node (0) at (-2,0) {$t_{2}^{-1}t_{4}^{-1}t_{6}^{-1}we_{(3)}=$};
    \node (u1) [fill,circle,label=above:{$1$},scale=0.5] at (0,1){};
    \node (u2) [fill,circle,label=above:{$2$},scale=0.5] at (1.5,1){};
    \node (u3) [fill,circle,label=above:{$3$},scale=0.5] at (3,1){};
    \node (u4) [fill,circle,label=above:{$4$},scale=0.5] at (4.5,1){};
    \node (u5) [fill,circle,label=above:{$5$},scale=0.5] at (6,1){};
    \node (u6) [fill,circle,label=above:{$6$},scale=0.5] at (7.5,1){};
    \node (u7) [fill,circle,label=above:{$7$},scale=0.5] at (9,1){};
     \node (d1) [fill,circle,label=below:{$1'$},scale=0.5] at (0,-1){};
    \node (d2) [fill,circle,label=below:{$2'$},scale=0.5] at (1.5,-1){};
    \node (d3) [fill,circle,label=below:{$3'$},scale=0.5] at (3,-1){};
    \node (d4) [fill,circle,label=below:{$4'$},scale=0.5] at (4.5,-1){};
    \node (d5) [fill,circle,label=below:{$5'$},scale=0.5] at (6,-1){};
    \node (d6) [fill,circle,label=below:{$6'$},scale=0.5] at (7.5,-1){};
    \node (d7) [fill,circle,label=below:{$7'$},scale=0.5] at (9,-1){};
\path
    (u1) edge  (u2)
    (u4) edge  (u3)
    (d6) edge (d5)
    (u5) edge (u6)
    (d1) edge (d2)
    (d3) edge (d4)
    (u7) edge (d7);
\end{tikzpicture}
\]
Therefore $t_{2}^{-1}t_{4}^{-1}t_{6}^{-1}we_{(3)}=e_{(3)}$. We set $\sigma=t_{6}t_{4}t_{2}=s_{6}s_{4}s_{2}$. Then, $\sigma\in B_{3}^{*}$ satisfies that $we_{(3)}=\sigma e_{(3)}$ and $\ell(\sigma e_{(3)})=\ell(\sigma)< \ell(w)$.

We can give an equivalent description of the above procedure using relations $(S_1)$-$(S_3)$, $(3)$ and $(5)$ in Definition \ref{brau}. We have
\begin{align*}
we_{(3)}&\overset{(S_3)}{=}s_{6}s_{1, 4}s_{2, 3}s_{2}(s_{5}s_{4}e_{(3)})\overset{(5)}{=}s_{6}s_{1, 4}s_{2, 3}s_{2}(s_{3}s_{4}e_{(3)})\overset{(S_2)}{=}s_{6}s_{1, 4}s_{2}(s_{2}s_{3}s_{2})s_{4}e_{(3)}\\
&\overset{(S_1), (S_3)}{=}s_{6}s_{1, 3}(s_{4}s_{3}s_{4})s_{2}e_{(3)}\overset{(S_1), (S_2)}{=}s_{6}s_{1, 2}s_{4}(s_{3}s_{2}e_{(3)})\overset{(S_3), (5)}{=}s_{6}s_{4}s_{1}s_{2}(s_{1}s_{2}e_{(3)})\\
&\overset{(S_1), (S_2)}{=}s_{6}s_{4}s_{2}(s_{1}e_{(3)})\overset{(3)}{=}s_{6}s_{4}s_{2}e_{(3)}.
\end{align*}
We set $\sigma=s_{6}s_{4}s_{2}$. Then $\sigma\in B_{3}^{*}$ is the desired element.

In an analogous way, in $\B_7(q,z)$ we have
\begin{align*}
&H_{w}e_{(3)}=H_{6}H_{1,5}^{+}H_{2,4}^{+}H_{2}e_{(3)}\overset{(Q3)}{=}H_{6}H_{1,4}^{+}H_{2,3}^{+}H_{2}(H_{5}H_{4}e_{(3)})\\
&\overset{Lemma ~\ref{identiesti}(4)}{=}H_{6}H_{1,4}^{+}H_{2,3}^{+}H_{2}(H_{3}H_{4}e_{(3)})\overset{(Q2)}{=}H_{6}H_{1,4}^{+}H_{2}(H_{2}H_{3}H_{2})H_{4}e_{(3)}\\
&\overset{(Q2), (Q3)}{=}H_{6}H_{1,3}^{+}H_{2}^{2}(H_{3}H_{4}H_{3})H_{2}e_{(3)}\\
&\overset{(Q1), Lemma ~\ref{identiesti}(4)}{=}H_{6}H_{1,3}^{+}((q-q^{-1})H_{2}+1)H_{3}H_{4}(H_{1}H_{2}e_{(3)})\\
&=(q-q^{-1})H_{6}H_{1,2}^{+}(H_{2}H_{3}H_{2})H_{4}H_{1}H_{2}e_{(3)}+H_{6}H_{1,2}^{+}((q-q^{-1})H_{3}+1)H_{4}H_{1}H_{2}e_{(3)}\\
&=(q-q^{-1})H_{6}H_{1}((q-q^{-1})H_{2}+1)H_{3}H_{4}H_{1}H_{2}H_{1}e_{(3)}\\
&\quad+(q-q^{-1})H_{6}H_{1,4}^{+}H_{1,2}^{+}e_{(3)}+H_{6}H_{4}((q-q^{-1})H_{1}+1)H_{2}H_{1}e_{(3)}\\
&\overset{Lemma ~\ref{identiesti}(2)}{=}q(q-q^{-1})^{2}H_{6}H_{1,4}^{+}H_{1,2}^{+}e_{(3)}+q(q-q^{-1})H_{6}H_{3,4}^{+}((q-q^{-1})H_{1}+1)H_{2}e_{(3)}\\
&\qquad+(q-q^{-1})H_{6}H_{1,4}^{+}H_{1,2}^{+}e_{(3)}+q(q-q^{-1})H_{6}H_{4}H_{1,2}^{+}e_{(3)}+qH_{6}H_{4}H_{2}e_{(3)}\\
&=q^{2}(q-q^{-1})H_{6}H_{1,4}^{+}H_{1,2}^{+}e_{(3)}+q(q-q^{-1})^{2}H_{6}H_{3,4}^{+}H_{1,2}^{+}e_{(3)}\\
&\quad+q(q-q^{-1})H_{6}H_{3,4}^{+}H_{2}e_{(3)}+q(q-q^{-1})H_{6}H_{4}H_{1,2}^{+}e_{(3)}+qH_{6}H_{4}H_{2}e_{(3)}.
\end{align*}
Thus, we see that the element $H_{w}e_{(3)}$ can be written as a $\mathbb{Z}[q, q^{-1}]$-linear combination of elements $H_{\sigma_{j}}e_{(3)}$ $(1\leq j\leq 5)$, where each $\sigma_{j}$ satisfies that $\sigma_{j}\in B_{3}^{*}$ and $\ell(\sigma_{j})< \ell(w)$.
\end{example}

Summarizing, we obtain the following lemma.
\begin{lem}\label{H_B_ke_k}
For any $w\in \mathfrak{S}_n$, we have $$H_{w}e_{(k)}=\sum\limits_{\substack{\sigma'\in B_{k}^{*}\\\ell(\sigma')\leq\ell(w)}}r_{\sigma', w}H_{\sigma'}e_{(k)}$$
for some $r_{\sigma', w}\in \mathbb{Z}[q, q^{-1}]$.
\end{lem}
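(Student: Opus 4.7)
The plan is to proceed by induction on $\ell(w)$, running at the $q$-Brauer level the same straightening procedure recalled just before the lemma for the Brauer algebra. Recall that at the Brauer level, starting from any reduced expression of $w$ and using only relations $(S_1)$--$(S_3)$, $(3)$, and $(5)$ from Definition \ref{brau}, one can rewrite $we_{(k)}$ as $\sigma e_{(k)}$ for the element $\sigma \in B_k^*$ produced by Lemma \ref{B_ke_k}. The key observation is that the corresponding moves are available in $\B_n(q,z)$: $(S_2)$--$(S_3)$ correspond to $(Q_2)$--$(Q_3)$, while the relations $(3)$ (absorption of $s_{2j+1}$ into $e_{(k)}$) and $(5)$ (the identity $s_{2j-1}s_{2j}e_{(k)} = s_{2j+1}s_{2j}e_{(k)}$) correspond respectively to parts $(2)$ and $(4)$ of Lemma \ref{identiesti}.

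The base case $\ell(w) = 0$ is trivial, since $H_1 e_{(k)} = e_{(k)}$ and $1 \in B_k^*$. For the inductive step, I will perform the Brauer straightening move-by-move on $H_w e_{(k)}$, substituting each Brauer move by its quantum counterpart listed above. All these quantum moves are length-preserving and introduce at most a scalar factor $\pm q^{\pm 1}$ (coming from Lemma \ref{identiesti}(2)). The only exception is the Brauer relation $(S_1)$, $s_i^2 = 1$, whose quantum counterpart is the quadratic relation $(Q_1)$, which in the form $H_i^2 = (q - q^{-1}) H_i + 1$ produces two summands, both involving words in $\mathfrak{S}_n$ of length strictly less than the current one, with coefficients in $\mathbb{Z}[q,q^{-1}]$.

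Running the straightening therefore yields
\[
H_w e_{(k)} \;=\; H_\sigma e_{(k)} \;+\; \sum_i c_i(q)\, H_{w_i} e_{(k)},
\]
where $\sigma \in B_k^*$ satisfies $\ell(\sigma) \leq \ell(w)$ by Lemma \ref{B_ke_k}, each $c_i(q) \in \mathbb{Z}[q,q^{-1}]$, and $\ell(w_i) < \ell(w)$ for all $i$. Applying the inductive hypothesis to each $H_{w_i} e_{(k)}$ rewrites it as a $\mathbb{Z}[q,q^{-1}]$-linear combination of $H_{\sigma'} e_{(k)}$ with $\sigma' \in B_k^*$ and $\ell(\sigma') \leq \ell(w_i) < \ell(w)$, and the induction closes.

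The main obstacle is verifying that the sequence of quantum rewriting steps can be arranged to parallel the Brauer-level straightening exactly, so that the only source of additional terms is the quadratic relation $(Q_1)$, and each such extra term is genuinely of lower length in $\mathfrak{S}_n$. This is precisely what the worked example preceding the lemma illustrates, and once this parallelism is spelled out, the inductive bookkeeping and the assertion $r_{\sigma',w} \in \mathbb{Z}[q,q^{-1}]$ follow at once, since the only coefficients ever introduced are $\pm q^{\pm 1}$ and $\pm(q - q^{-1})$.
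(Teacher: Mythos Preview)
Your proposal is correct and follows exactly the approach the paper sketches in the discussion and worked example preceding the lemma: run the Brauer-level straightening using the quantum analogues $(Q1)$--$(Q3)$ and Lemma~\ref{identiesti}(2),(4), with the only branching coming from the quadratic relation $(Q1)$, and close up by induction on length. Two small inaccuracies worth fixing: in the base case your ``$H_1$'' should denote the identity element, not $H_{s_1}$; and the leading term is $q^a H_\sigma e_{(k)}$ for some power $a$ (from the absorptions in Lemma~\ref{identiesti}(2)), not $H_\sigma e_{(k)}$ with coefficient~$1$, as the paper's example already shows---this does not affect the argument since $q^a \in \mathbb{Z}[q,q^{-1}]$.
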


Applying Lemma \ref{involutive anti-automorphism}, we immediately get the following lemma.
\begin{lem}\label{e_kH_B_k}
For any $y\in \mathfrak{S}_n$, we have $$e_{(k)}H_{y}=\sum\limits_{\substack{\varpi'\in B_{k}\\\ell(\varpi')\leq\ell(y)}}s_{\varpi', y}e_{(k)}H_{\varpi'}$$
for some $s_{\varpi', y}\in \mathbb{Z}[q, q^{-1}]$.
\end{lem}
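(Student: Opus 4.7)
The plan is to obtain Lemma \ref{e_kH_B_k} as a direct consequence of Lemma \ref{H_B_ke_k} by applying the anti-automorphism $\jmath$ from Lemma \ref{involutive anti-automorphism}. Since $\jmath$ already fixes $e_{(k)}$ and sends $H_w$ to $H_{w^{-1}}$, no new combinatorial work is needed — one simply transports the left-multiplication result to a right-multiplication result and relabels.

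Concretely, I would first replace $w$ by $y^{-1}$ in Lemma \ref{H_B_ke_k}, using the fact that $\ell(y^{-1})=\ell(y)$, to write
\[
H_{y^{-1}}e_{(k)}=\sum_{\substack{\sigma'\in B_{k}^{*}\\ \ell(\sigma')\leq\ell(y)}}r_{\sigma',\, y^{-1}}\,H_{\sigma'}e_{(k)}
\]
with coefficients $r_{\sigma', y^{-1}}\in \Z[q,q^{-1}]$. Then I would apply $\jmath$ to both sides. On the left, $\jmath(H_{y^{-1}}e_{(k)})=\jmath(e_{(k)})\jmath(H_{y^{-1}})=e_{(k)}H_{y}$, and on each term of the right, $\jmath(H_{\sigma'}e_{(k)})=e_{(k)}H_{(\sigma')^{-1}}$.

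Now I would change the summation variable to $\varpi':=(\sigma')^{-1}$. By the definition $B_{k}=\{\omega^{-1}\,|\,\omega\in B_{k}^{*}\}$, as $\sigma'$ ranges over $B_{k}^{*}$, $\varpi'$ ranges over $B_{k}$, and $\ell(\varpi')=\ell(\sigma')\leq\ell(y)$. Setting $s_{\varpi',y}:=r_{(\varpi')^{-1},\,y^{-1}}\in\Z[q,q^{-1}]$ gives the desired identity
\[
e_{(k)}H_{y}=\sum_{\substack{\varpi'\in B_{k}\\ \ell(\varpi')\leq\ell(y)}}s_{\varpi',y}\,e_{(k)}H_{\varpi'}.
\]

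There is no serious obstacle: the only thing to check is that $\jmath$ is indeed $\Q(q,z)$-linear (so that the scalar coefficients $r_{\sigma',y^{-1}}$ survive unchanged), which is exactly what Lemma \ref{involutive anti-automorphism} guarantees, together with the length-preserving property of inversion in $\mathfrak S_n$ and the bijection $\sigma'\mapsto (\sigma')^{-1}$ between $B_k^*$ and $B_k$.
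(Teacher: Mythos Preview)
Your proposal is correct and is precisely the approach the paper takes: the paper's proof consists of the single sentence ``Applying Lemma~\ref{involutive anti-automorphism}, we immediately get the following lemma,'' and your argument spells out exactly that application of $\jmath$ to Lemma~\ref{H_B_ke_k}.
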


\begin{lem}\label{wyomega}
For each $w, y\in \mathfrak{S}_n$ and $\omega_{(d)}\in \mathfrak{S}_{2k+1, n}$, we have
$$H_{w}e_{(k)}H_{\omega_{(d)}}H_{y}=\sum\limits_{\substack{a\in I_{k,n}\\\ell(a)\leq\ell(w)+\ell(\omega_{(d)})+\ell(y)}}r_{a}H_a$$
for some $r_{a}\in \mathbb{Z}[q, q^{-1}]$.
\end{lem}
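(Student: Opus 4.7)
The plan is to transform $H_w e_{(k)} H_{\omega_{(d)}} H_y$ into a $\mathbb{Z}[q,q^{-1}]$-linear combination of the standard basis elements $H_a$ ($a \in I_{k,n}$) by pushing $e_{(k)}$ to a canonical position and invoking the factorisation lemmas already in place. First I would apply Lemma~\ref{H_B_ke_k} on the left to expand $H_w e_{(k)} = \sum_{\sigma' \in B_k^*,\ \ell(\sigma') \le \ell(w)} r_{\sigma', w} H_{\sigma'} e_{(k)}$ with $r_{\sigma',w} \in \mathbb{Z}[q,q^{-1}]$. Since $\omega_{(d)} \in \mathfrak{S}_{2k+1, n}$, Lemma~\ref{identiesti}(8) lets me commute $e_{(k)}$ past $H_{\omega_{(d)}}$; then Lemma~\ref{e_kH_B_k} expands $e_{(k)} H_y$ as a combination of $e_{(k)} H_{\varpi'}$ with $\varpi' \in B_k$ and $\ell(\varpi') \le \ell(y)$. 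Together these yield
$$H_w e_{(k)} H_{\omega_{(d)}} H_y = \sum_{\sigma', \varpi'} r_{\sigma',w}\, s_{\varpi',y}\, H_{\sigma'} H_{\omega_{(d)}} e_{(k)} H_{\varpi'},$$
with the stated length bounds on $\sigma'$ and $\varpi'$.

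Next I would invoke Lemma~\ref{permutation unique} to decompose each $\sigma' = \omega' \pi'$ with $\omega' \in B_{k,n}^*$, $\pi' \in \mathfrak{S}_{2k+1, n}$ and $\ell(\sigma') = \ell(\omega') + \ell(\pi')$, and each $\varpi' = \tau' \varpi''$ with $\tau' \in \mathfrak{S}_{2k+1, n}$, $\varpi'' \in B_{k,n}$ and $\ell(\varpi') = \ell(\tau') + \ell(\varpi'')$. Since lengths add in each decomposition, $H_{\sigma'} = H_{\omega'} H_{\pi'}$ and $H_{\varpi'} = H_{\tau'} H_{\varpi''}$. Applying Lemma~\ref{identiesti}(8) once more to move $H_{\tau'}$ to the left of $e_{(k)}$, each summand becomes
$$H_{\omega'}\bigl(H_{\pi'} H_{\omega_{(d)}} H_{\tau'}\bigr) e_{(k)} H_{\varpi''}.$$
The inner factor $H_{\pi'} H_{\omega_{(d)}} H_{\tau'}$ lies in the parabolic Hecke subalgebra generated by $H_{2k+1}, \ldots, H_{n-1}$, and by the standard multiplication rule for type $A$ Hecke algebras it expands uniquely as $\sum_\xi c_\xi H_\xi$ with $c_\xi \in \mathbb{Z}[q,q^{-1}]$, $\xi \in \mathfrak{S}_{2k+1, n}$, and $\ell(\xi) \le \ell(\pi') + \ell(\omega_{(d)}) + \ell(\tau')$.

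Finally, commuting each $H_\xi$ back through $e_{(k)}$, every term takes the shape $H_{\omega'} e_{(k)} H_\xi H_{\varpi''}$ with triple $(\omega', \xi, \varpi'') \in B_{k,n}^* \times \mathfrak{S}_{2k+1, n} \times B_{k,n}$. By Lemma~\ref{bijection} this equals $H_a$ for the unique $a \in I_{k,n}$ corresponding to the triple under $\rho$, and furthermore $\ell(a) = \ell(\omega') + \ell(\xi) + \ell(\varpi'')$. Chaining all length estimates gives
$$\ell(a) \le \ell(\omega') + \ell(\pi') + \ell(\omega_{(d)}) + \ell(\tau') + \ell(\varpi'') = \ell(\sigma') + \ell(\omega_{(d)}) + \ell(\varpi') \le \ell(w) + \ell(\omega_{(d)}) + \ell(y),$$
which is exactly the claimed bound. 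The only step that requires any real work is the length-controlled expansion of $H_{\pi'} H_{\omega_{(d)}} H_{\tau'}$ in the standard basis of the parabolic Hecke subalgebra, but this is just the well-known identity $H_\alpha H_i \in \{H_{\alpha s_i},\ H_{\alpha s_i} + (q-q^{-1}) H_\alpha\}$ applied inductively, so it presents no genuine obstacle; everything else is careful bookkeeping.
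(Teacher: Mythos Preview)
Your proposal is correct and follows essentially the same route as the paper: apply Lemmas~\ref{H_B_ke_k} and~\ref{e_kH_B_k} on the two sides, use Lemma~\ref{permutation unique} to factor through $B_{k,n}^*\times\mathfrak S_{2k+1,n}\times B_{k,n}$, expand the middle Hecke product with the standard length bound, and finish with Lemma~\ref{bijection}. The only cosmetic difference is that the paper duplicates $e_{(k)}$ via $e_{(k)}^2=\bigl(\tfrac{z-z^{-1}}{q-q^{-1}}\bigr)^k e_{(k)}$ (introducing a factor $\bigl(\tfrac{q-q^{-1}}{z-z^{-1}}\bigr)^k$ that later cancels), whereas you simply commute a single $e_{(k)}$ back and forth using Lemma~\ref{identiesti}(8); your version is slightly cleaner in that respect.
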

\begin{proof}
By Lemma \ref{identiesti}(8) we see that $e_{(k)}H_{w}=H_{w}e_{(k)}$ for any $w\in \mathfrak{S}_{2k+1, n}$. By Lemma \ref{identiesti}(6), we have $e_{(k)}^{2}=\big(\frac{z-z^{-1}}{q-q^{-1}}\big)^{k}e_{(k)}$. Thus, by Lemmas \ref{permutation unique}, \ref{H_B_ke_k} and \ref{e_kH_B_k}, we have
\begin{align*}
H_{w}e_{(k)}H_{\omega_{(d)}}H_{y}=&\Big(\frac{q-q^{-1}}{z-z^{-1}}\Big)^{k}\times
\sum\limits_{\substack{(\omega', \pi')\in B_{k, n}^{*}\times \mathfrak{S}_{2k+1, n}\\\ell(\omega')+\ell(\pi')\leq \ell(w)}}r_{\omega', \pi'}H_{\omega'}H_{\pi'}e_{(k)}\\
&\times H_{\omega_{(d)}}\times
\sum\limits_{\substack{(\tau', \varpi')\in \mathfrak{S}_{2k+1, n}\times B_{k, n} \\\ell(\tau')+\ell(\varpi')\leq \ell(y)}}s_{\tau', \varpi'}e_{(k)}H_{\tau'}H_{\varpi'}
\end{align*}
for some $r_{\omega', \pi'}, s_{\tau', \varpi'}\in \mathbb{Z}[q, q^{-1}]$.

For any $\pi', \tau'\in \mathfrak{S}_{2k+1, n}$ as above, we have $$H_{\pi'}H_{\omega_{(d)}}H_{\tau'}=\sum\limits_{\substack{\chi\in \mathfrak{S}_{2k+1, n}\\\ell(\chi)\leq\ell(\pi')+\ell(\omega_{(d)})+\ell(\tau')}}t_{\pi', \tau'}^{\chi}H_{\chi}$$
for some $t_{\pi', \tau'}^{\chi}\in \mathbb{Z}[q, q^{-1}]$. Thus, we have
\begin{align*}
H_{w}e_{(k)}H_{\omega_{(d)}}H_{y}=\sum\limits_{\substack{(\omega',\chi,\varpi')\in B_{k, n}^{*}\times \mathfrak{S}_{2k+1, n}\times B_{k, n}\\\ell(\omega')+\ell(\chi)+\ell(\varpi')\leq \ell(w)+\ell(\omega_{(d)})+\ell(y)}}r_{\omega'}s_{\varpi'}t^{\chi}H_{\omega'}e_{(k)}H_{\chi}H_{\varpi'}
\end{align*}
for some $r_{\omega'}, s_{\varpi'}, t^{\chi}\in \mathbb{Z}[q, q^{-1}]$.

By Lemma \ref{bijection}, we see that
$$H_{w}e_{(k)}H_{\omega_{(d)}}H_{y}=\sum\limits_{\substack{a\in I_{k,n}\\\ell(a)\leq\ell(w)+\ell(\omega_{(d)})+\ell(y)}}r_{a}H_a$$
for some $r_{a}\in \mathbb{Z}[q, q^{-1}]$.
\end{proof}

\begin{lem}\label{bar_H_d}\label{overlinehd}
For each diagram $d\in  I_{k,n}$, we have
$$\overline{H_{d}}=H_{d}+\sum\limits_{\substack{d'\in  I_{k,n}\\\ell(d')<\ell(d)}}r_{d', d}H_{d'}$$
for some $r_{d', d}\in \mathbb{Z}[q, q^{-1}].$
\end{lem}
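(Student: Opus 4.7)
The plan is to apply the bar involution directly to a reduced expression. By Lemma \ref{bijection}, we may write $H_d = H_{\omega_1} e_{(k)} H_{\omega_{(d)}} H_{\omega_2}$ with $(\omega_1, \omega_{(d)}, \omega_2) \in B_{k,n}^* \times \mathfrak{S}_{2k+1,n} \times B_{k,n}$ and $\ell(d) = \ell(\omega_1) + \ell(\omega_{(d)}) + \ell(\omega_2)$. Since $\overline{e_{(k)}} = e_{(k)}$ by the previous lemma and $\overline{\cdot}$ is an algebra homomorphism,
$$\overline{H_d} = \overline{H_{\omega_1}} \cdot e_{(k)} \cdot \overline{H_{\omega_{(d)}}} \cdot \overline{H_{\omega_2}}.$$

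Next, I would invoke the standard Hecke-algebra bar-triangularity: for every $w \in \mathfrak{S}_n$,
$$\overline{H_w} = H_w + \sum_{\substack{w' \in \mathfrak{S}_n \\ \ell(w') < \ell(w)}} c_{w',w} H_{w'}, \qquad c_{w',w} \in \mathbb{Z}[q,q^{-1}].$$
This follows by an easy induction on $\ell(w)$ using the identity $\overline{H_i} = H_i^{-1} = H_i - (q-q^{-1})$ extracted from (Q1). The same induction shows that the parabolic Hecke subalgebra on $\{H_{2k+1}, \ldots, H_{n-1}\}$ is bar-stable, so $\overline{H_{\omega_{(d)}}}$ expands over $\{H_{w_3} : w_3 \in \mathfrak{S}_{2k+1,n},\ \ell(w_3) \le \ell(\omega_{(d)})\}$. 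Substituting these three expansions and multiplying out, the ``top'' term---picking $H_{\omega_1}, H_{\omega_{(d)}}, H_{\omega_2}$ from each factor---equals $H_{\omega_1} e_{(k)} H_{\omega_{(d)}} H_{\omega_2} = H_d$ by definition, while every other cross term has the form $H_{w_1} e_{(k)} H_{w_3} H_{w_2}$ with $w_1, w_2 \in \mathfrak{S}_n$, $w_3 \in \mathfrak{S}_{2k+1,n}$, and $\ell(w_1) + \ell(w_3) + \ell(w_2) < \ell(d)$.

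Finally, Lemma \ref{wyomega} rewrites each such cross term as a $\mathbb{Z}[q,q^{-1}]$-linear combination of $\{H_a : a \in I_{k,n},\ \ell(a) < \ell(d)\}$. Collecting all contributions and using the linear independence of the standard basis (Proposition \ref{thmhdstandard}) to extract coefficients then yields the claimed expansion. The only point requiring care is to confirm that the ``top'' contribution is $H_d$ with coefficient exactly $1$ and nothing else of length $\ell(d)$; this is immediate from the very definition $H_d := H_{\omega_1} e_{(k)} H_{\omega_{(d)}} H_{\omega_2}$ on a reduced expression together with basis independence, so the substantive work in the proof is just a routine bookkeeping of lower-order corrections rather than any genuine difficulty.
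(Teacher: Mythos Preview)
Your proposal is correct and follows essentially the same route as the paper: decompose $H_d$ via Lemma~\ref{bijection}, apply the bar involution factor by factor using $\overline{e_{(k)}}=e_{(k)}$ and the standard Hecke triangularity, then invoke Lemma~\ref{wyomega} on each cross term to land back in the standard basis with strictly smaller length. Your explicit remark that the parabolic subalgebra on $\{H_{2k+1},\ldots,H_{n-1}\}$ is bar-stable and your check that the top term contributes exactly $H_d$ with coefficient $1$ are the only points the paper leaves implicit, so your write-up is, if anything, slightly more detailed.
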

\begin{proof}
For $k=0$, that is, $d$ has no horizontal edge, it is well known. Assume that $1\leq k\leq \lfloor\frac{n}{2}\rfloor$. By Lemma \ref{bijection}, if $(\omega_1,\omega_{(d)},\omega_2)\in B_{k, n}^{*}\times \mathfrak{S}_{2k+1, n}\times B_{k, n}$ is such that $\rho((\omega_1,\omega_{(d)},\omega_2))=d$, then we have $H_d=H_{\omega_1}e_{(k)}H_{\omega_{(d)}}H_{\omega_2}$ and $\ell(d)=\ell(\omega_1)+\ell(\omega_{(d)})+\ell(\omega_2).$ We have
\begin{align*}
\overline{H_{d}}=&\overline{H_{\omega_1}}e_{(k)}\overline{H_{\omega_{(d)}}}\overline{H_{\omega_2}}\\
=&\bigg(H_{\omega_1}+\sum\limits_{\omega'_1; \ell(\omega'_1)<\ell(\omega_1)}a_{\omega'_1, \omega_1}H_{\omega'_1}\bigg)e_{(k)} \bigg(H_{\omega_{(d)}}+\sum\limits_{\substack{\omega_{(d')}\in \mathfrak{S}_{2k+1, n}\\\ell(\omega_{(d')})<\ell(\omega_{(d)})}}b_{\omega_{(d')}, \omega_{(d)}}H_{\omega_{(d')}}\bigg)\\
&\times \bigg(H_{\omega_2}+\sum\limits_{\omega'_2; \ell(\omega'_2)<\ell(\omega_2)}c_{\omega'_2, \omega_2}H_{\omega'_2}\bigg).
\end{align*}
By Lemma \ref{wyomega}, we obtain the desired result.
\end{proof}

By Proposition \ref{thmhdstandard}, Lemma \ref{overlinehd} and Lusztig's lemma (cf. \cite[Lemma 24.2.1]{Lu94}), we obtain the canonical and dual canonical basis for $\B_n(q,z)$ over $\Q(q,z)$.
\begin{theorem}\label{thmhdcanonical}
There exists a unique basis $\{C_{d}\:|\:d\in  I_{k,n}, 0\leq k\leq \lfloor\frac{n}{2}\rfloor\}$ of $\B_n(q,z)$ over $\Q(q,z)$, called the canonical basis, such that
\begin{align*}
(1) ~~\overline{C_{d}}&=C_{d},\\
(2) ~~C_{d}&=H_{d}+\sum\limits_{\substack{d'\in I_{k,n} \\ \ell(d')<\ell(d)}}p_{d', d}H_{d'}, ~\mbox{where } p_{d', d}\in q^{-1}\mathbb{Z}[q^{-1}].
\end{align*}
\end{theorem}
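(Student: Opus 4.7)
The plan is to obtain Theorem \ref{thmhdcanonical} as a direct application of Lusztig's lemma (\cite[Lemma 24.2.1]{Lu94}); all the substantive work will have been done in the preceding lemmas. The standard basis $\{H_d : d \in I_n\}$ from Proposition \ref{thmhdstandard} realizes $\B_n(q,z)$ as a free module over $\Q(q,z)$ indexed by the finite set $I_n = \bigsqcup_{k=0}^{\lfloor n/2 \rfloor} I_{k,n}$. I would equip $I_n$ with the partial order induced by the length function, i.e.\ declare $d' \prec d$ whenever $\ell(d') < \ell(d)$; since $I_n$ is finite, intervals are automatically finite, so the poset is suitable for running the Lusztig construction.

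Next, I would verify the two hypotheses of Lusztig's lemma. First, the bar involution of Lemma \ref{abcdinvo20} is a $\Q$-algebra involution on $\B_n(q,z)$ with $\overline{q} = q^{-1}$, so it is semilinear in the appropriate sense. Second, Lemma \ref{overlinehd} provides the crucial upper-unitriangularity:
\[
\overline{H_d} = H_d + \sum_{d' \prec d} r_{d', d}\, H_{d'}, \qquad r_{d', d} \in \Z[q, q^{-1}].
\]
Both the normalization (coefficient $1$ on $H_d$ itself) and the length-decreasing support of the error terms are exactly what Lusztig's lemma requires as input.

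Lusztig's lemma then yields a unique family $\{C_d : d \in I_n\}$ satisfying conditions (1) and (2). The construction is the standard length-induction: set $C_d = H_d$ whenever $d$ has minimal length in $I_n$ (in particular for the identity diagram), and for larger $d$ one inductively defines
\[
C_d = H_d + \sum_{d' \prec d} p_{d', d}\, H_{d'}, \qquad p_{d', d} \in q^{-1}\Z[q^{-1}],
\]
choosing the $p_{d', d}$ recursively in order of increasing length so that $\overline{C_d} = C_d$ holds; the unitriangularity above guarantees that such a choice exists and is unique at each stage. Linear independence of the $C_d$ follows from the unitriangular change of basis from $\{H_d\}$, so the $C_d$ indeed form a $\Q(q,z)$-basis of $\B_n(q,z)$. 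Uniqueness of the whole family reduces to the observation that any bar-invariant element of $\sum_{d'} q^{-1}\Z[q^{-1}]\, H_{d'}$ must vanish, since $\overline{q^{-1}\Z[q^{-1}]} = q\Z[q]$ intersects $q^{-1}\Z[q^{-1}]$ trivially.

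The main obstacle is not in this final invocation of Lusztig's lemma, which is essentially routine once its hypotheses are in place, but rather in establishing the unitriangularity of the bar involution itself; that, in turn, rests on the bar-invariance of $e_{(k)}$ proved via the recursion \eqref{ek+1} together with Lemma \ref{involutive anti-automorphism}, and on the length-controlled expansion formulas of Lemmas \ref{H_B_ke_k}--\ref{wyomega}, which reduce the analysis of $\overline{H_d}$ to length-decreasing corrections within $\Z[q, q^{-1}]$. With all those inputs secured, the Kazhdan--Lusztig--Lusztig recipe produces the canonical basis $\{C_d\}$ immediately.
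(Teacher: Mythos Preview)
Your proposal is correct and follows exactly the paper's approach: the paper simply states that the theorem follows from Proposition~\ref{thmhdstandard}, Lemma~\ref{overlinehd}, and Lusztig's lemma \cite[Lemma 24.2.1]{Lu94}, and you have spelled out precisely this argument with the standard length-induction details filled in. Your additional remarks on how the unitriangularity was obtained (bar-invariance of $e_{(k)}$, Lemmas~\ref{H_B_ke_k}--\ref{wyomega}) accurately summarize the preparatory work the paper carries out before invoking Lusztig's lemma.
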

\begin{theorem}
There exists a unique basis $\{C^*_{d}\:|\:d\in  I_{k,n}, 0\leq k\leq \lfloor\frac{n}{2}\rfloor\}$ of $\B_n(q,z)$ over $\Q(q,z)$, called the dual canonical basis, such that
\begin{align*}
(1) ~~\overline{C^*_{d}}&=C^*_{d},\\
(2) ~~C^*_{d}&=H_{d}+\sum\limits_{\substack{d'\in I_{k,n} \\ \ell(d')<\ell(d)}}p^*_{d', d}H_{d'}, ~\mbox{where } p^*_{d', d}\in q\mathbb{Z}[q].
\end{align*}
\end{theorem}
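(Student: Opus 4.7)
The plan is to prove this by essentially repeating the argument of Theorem \ref{thmhdcanonical}, invoking Lusztig's Lemma (\cite[Lemma 24.2.1]{Lu94}) but with the alternative choice of positivity cone in the decomposition $\mathbb{Z}[q,q^{-1}] = q\mathbb{Z}[q] \oplus \mathbb{Z} \oplus q^{-1}\mathbb{Z}[q^{-1}]$. All three inputs required are already in hand: the bar involution on $\B_n(q,z)$ (Lemma \ref{abcdinvo20}), the standard basis $\{H_d \mid d \in I_n\}$ (Proposition \ref{thmhdstandard}), and the unitriangularity of the bar involution with respect to the length function on the indexing set $\bigsqcup_{0 \leq k \leq \lfloor n/2 \rfloor} I_{k,n}$ (Lemma \ref{overlinehd}).

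With these inputs, I would construct $C^*_d$ by induction on $\ell(d)$. The base case $\ell(d)=0$ sets $C^*_d := H_d$, which is already bar-invariant. For the inductive step, assuming $C^*_{d''}$ has been constructed for every $d''$ with $\ell(d'') < \ell(d)$, one seeks
$$C^*_d = H_d + \sum_{\substack{d' \in I_{k,n} \\ \ell(d') < \ell(d)}} p^*_{d',d} H_{d'},$$
with $p^*_{d',d} \in q\mathbb{Z}[q]$ determined uniquely by the bar-invariance requirement $\overline{C^*_d} = C^*_d$. Using Lemma \ref{overlinehd} to expand $\overline{H_d}$, the equation $\overline{C^*_d} = C^*_d$ becomes a triangular system in the polynomials $p^*_{d',d}$; at each stage the relevant equation has the form $\overline{p^*_{d',d}} - p^*_{d',d} = (\text{known expression in } \mathbb{Z}[q,q^{-1}])$, which admits a unique solution in $q\mathbb{Z}[q]$ thanks to the direct sum decomposition above.

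Uniqueness is the standard argument: if two bases $\{C^*_d\}, \{\widetilde{C}^*_d\}$ both satisfy (1) and (2), then their difference $C^*_d - \widetilde{C}^*_d$ is a bar-invariant element with all coefficients (with respect to the standard basis) lying in $q\mathbb{Z}[q]$; picking a coefficient of maximal $\ell$ that is nonzero and applying the bar involution yields a polynomial in $q\mathbb{Z}[q]$ equal to its own bar, which forces it to be zero.

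I do not anticipate any substantial obstacle here: the entire proof is a formal specialization of Lusztig's Lemma, and amounts to repeating the argument of Theorem \ref{thmhdcanonical} with the roles of $q^{-1}\mathbb{Z}[q^{-1}]$ and $q\mathbb{Z}[q]$ interchanged. The only mild bookkeeping point is to observe that the partial order induced by the length function on $\bigsqcup_k I_{k,n}$ is well-founded (being a finite poset), which is immediate, so the induction is well-defined.
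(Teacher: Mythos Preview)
Your proposal is correct and follows exactly the paper's approach: the paper derives both Theorem \ref{thmhdcanonical} and the dual canonical basis theorem simultaneously from Proposition \ref{thmhdstandard}, Lemma \ref{overlinehd}, and Lusztig's lemma \cite[Lemma 24.2.1]{Lu94}, with the dual version obtained simply by switching the positivity cone from $q^{-1}\mathbb{Z}[q^{-1}]$ to $q\mathbb{Z}[q]$. Your write-up in fact spells out the inductive mechanism of Lusztig's lemma more explicitly than the paper does.
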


\begin{remark}
Note that in the above theorems, the coefficients $p_{d', d}$ (resp. $p^*_{d', d}$) are polynomials in $q^{-1}$ (resp. $q$), which do not depend on $z$ (compare with \cite[\S 5.2]{FG95}).
\end{remark}

\begin{remark}
Fix some $0\leq k\leq \lfloor\frac{n}{2}\rfloor$ and $d\in  I_{k,n}$. Assume that by Lemma \ref{bijection}, $(\omega_1,\omega_{(d)},\omega_2)$ is the unique element in $B_{k, n}^{*}\times \mathfrak{S}_{2k+1, n}\times B_{k, n}$ such that $\rho((\omega_1,\omega_{(d)},\omega_2))=d$. It is clear from the definition that $(\omega_2^{-1},\omega_{(d)}^{-1},\omega_1^{-1})$ also belongs to $B_{k, n}^{*}\times \mathfrak{S}_{2k+1, n}\times B_{k, n}$, and we can assume that $d'\in  I_{k,n}$ is such that $\rho((\omega_2^{-1},\omega_{(d)}^{-1},\omega_1^{-1}))=d'$.

It is easy to check that the bar involution $\overline{\cdot}$ on $\B_n(q,z)$ commutes with the anti-involution $\jmath$, and moreover, $\jmath$ is $\Q(q,z)$-linear. Therefore, we have $\jmath(C_{d})=C_{d'}$ and $\jmath(C^*_{d})=C^*_{d'}$.
\end{remark}

Finally, let us look at some examples.
\begin{example}

(1) When $n=2$, the canonical basis of $\B_2(q,z)$ is given by $\{1,\Qy,H_1+q^{-1}\}$.

(2) When $n=3$, the canonical basis of $\B_3(q,z)$ is given by 
\begin{align*}
&C_{0}=1,\quad C_1=H_1+q^{-1},\quad C_2=H_2+q^{-1}, \\
&C_{12}=H_1H_2+q^{-1}H_1+q^{-1}H_2+q^{-2},\\
&C_{21}=H_2H_1+q^{-1}H_1+q^{-1}H_2+q^{-2},\\
&C_{121}=H_1H_2H_1+q^{-1}H_1H_2+q^{-1}H_2H_1+q^{-2}H_1+q^{-2}H_2+q^{-3},\\
&C_e=\Qy,\quad C_{2e}=H_2\Qy+q^{-1}\Qy,\quad C_{e2}=\Qy H_2+q^{-1}\Qy,\\
&C_{2e2}=H_2\Qy H_2+q^{-1}H_2\Qy+q^{-1}\Qy H_2+q^{-2}\Qy,\\
&C_{12e}=H_1H_2\Qy+q^{-1}H_2\Qy+q^{-2}\Qy,\\
&C_{e21}=\Qy H_2H_1+q^{-1}\Qy H_2+q^{-2}\Qy,\\
&C_{12e2}=H_1H_2\Qy H_2+q^{-1}H_2\Qy H_2+q^{-1}H_1H_2\Qy +q^{-2}H_2\Qy+q^{-2}\Qy H_2+q^{-3}\Qy,\\
&C_{2e21}=H_2\Qy H_2H_1+q^{-1}H_2\Qy H_2+q^{-1}\Qy H_2H_1 +q^{-2}H_2\Qy+q^{-2}\Qy H_2+q^{-3}\Qy,\\
&C_{12e21}=H_1H_2\Qy H_2H_1+q^{-1}H_1H_2\Qy H_2+q^{-1}H_2\Qy H_2H_1+q^{-2}H_1H_2\Qy\atop+q^{-2}H_2\Qy H_2+q^{-2}\Qy H_2H_1+q^{-3}H_2\Qy+q^{-3}\Qy H_2+q^{-4}\Qy.
\end{align*}
Moreover, we can compute the structure constants of $\B_3(q,z)$ with respect to the above basis. For example, we have
\begin{align*}
&C_1 \cdot C_{2e}=C_{12e}+C_e,\\
&C_{2e}\cdot C_{e2}=\frac{z-z^{-1}}{q-q^{-1}}C_{2e2},  \\
&C_e\cdot C_{12e}=\frac{q^{2}z-q^{-2}z^{-1}}{q-q^{-1}}C_e.
\end{align*}
\end{example}

\section{$\imath$Schur duality of type AI}
\label{sec:AI}
\subsection{$\io$quantum group of type AI}
In this section we fix $m\in \mathbb{Z}_{\geq 2}$ and focus on the quantum symmetric pair of type AI with the Satake diagram as below (cf.~\cite[Table 4]{BW18b}):
\begin{equation*}
\label{AI}
\begin{tikzpicture}[scale=1, semithick]
\node (1) [draw,circle,label=below:{$1$},scale=0.5] at (0,0){};
\node (2) [draw,circle,label=below:{$2$},scale=0.5] at (1.3,0){};
\node (3)  at (2.6,0) {$\cdots$} ;
\node (4) [draw,circle,label=below:{$m-2$},scale=0.5] at (3.9,0){};
\node (5) [draw,circle,label=below:{$m-1$},scale=0.5] at (5.2,0){};

\path (1) edge (2)
          (2) edge (3)
          (3) edge (4)
          (4) edge (5);
\end{tikzpicture}
\end{equation*}

For $a\in \mathbb{Z}$ and $b\in \mathbb{N}$, we define
$$[a]=\frac{q^{a}-q^{-a}}{q-q^{-1}},\quad [b]!=\prod_{h=1}^{b}\frac{q^{h}-q^{-h}}{q-q^{-1}},\quad \left[\begin{array}{c}a \\
b
\end{array}\right]=\prod_{h=1}^{b}\frac{q^{a-h+1}-q^{-(a-h+1)}}{q^{h}-q^{-h}}.$$

Let $(a_{ij})$ be the Cartan matrix of type $A_{m-1}$, that is, $a_{ii}=2$, $a_{ij}=-1$ if $|i-j|=1$ and $a_{ij}=0$ if $|i-j|\geq 2$. Let $\U=\U_{q}(\mathfrak{sl}_{m})$ denote the quantum group of type $A_{m-1}$. By definition, $\U$ is the associative algebra over
$\mathbb{Q}(q)$ with generators
$E_{i}, F_{i}, K_{i}^{\pm 1}$ $(1\leq i\leq m-1)$ and the following relations:
\begin{equation*}K_{i}K^{\pm1}_{j}=K^{\pm1}_{j}K_{i}, \quad K_{i}K_{i}^{-1}=1=K_{i}^{-1}K_{i}\
~\quad\hbox{for~all}~ 1\leq i, j\leq m-1,\end{equation*}
\begin{equation*}K_{i}E_{j}K_{i}^{-1}=q^{a_{ij}}E_{j},\quad K_{i}F_{j}K_{i}^{-1}=q^{-a_{ij}}F_{j}
\ ~\quad\hbox{for~all}~ 1\leq i, j\leq m-1,\end{equation*}
\begin{equation*}E_{i}F_{j}-F_{j}E_{i}=\delta_{i,j}\frac{K_{i}-K_{i}^{-1}}{q-q^{-1}},\end{equation*}
\begin{equation*}\sum\limits_{s=0}^{1-a_{ij}}(-1)^{s}\left[\begin{array}{c}1-a_{ij} \\
s
\end{array}\right]E_{i}^{1-a_{ij}-s}E_{j}E_{i}^{s}=0\  ~\quad\hbox{for }i\neq j,\end{equation*}
\begin{equation*}\sum\limits_{s=0}^{1-a_{ij}}(-1)^{s}\left[\begin{array}{c}1-a_{ij} \\
s
\end{array}\right]F_{i}^{1-a_{ij}-s}F_{j}F_{i}^{s}=0\  ~\quad\hbox{for }i\neq j.\end{equation*}

It is well known that $\U$ is a Hopf algebra with the comultiplication $\Delta$ as follows:
\begin{align*}
&\Delta(E_i)=E_i\otimes 1+ K_i \otimes E_i, \\
&\Delta(F_i)=F_i\otimes  K_i^{-1}+1\otimes F_i,\\
&\Delta(K_i^{\pm 1})=K_i^{\pm 1} \otimes K_i^{\pm 1}.
\end{align*}

\begin{definition}\label{1.2}
The $\imath$quantum group $\U^\io(\so_{m})$ of type AI, with a set of  parameters $\{\varsigma_i\mid 1\leq i\leq m-1\}\subset \mathbb{Z}[q, q^{-1}]$, is the $\Q(q)$-subalgebra of $\U$ generated by the following elements:
\begin{equation}
\label{eq:Bi}
B_i=F_i+\va_i E_{i}K_i^{-1}\quad \mathrm{ for }~1\leq i\leq m-1.
\end{equation}
\end{definition}

\begin{remark}
Suppose $\va_i=-1$ for $1\leq i\leq m-1$. When taking the $q\to 1$ limit in $\U^{\io}(\so_{m})$, we see that the generator $B_i$ reduces to $E_{i+1,i}-E_{i,i+1}$, where $E_{j,k}$'s are the $m\times m$ elementary matrices. Therefore, $\U^{\io}(\so_{m})$ reduces to the enveloping algebra $\U(\so_{m})$ of the special orthogonal Lie algebra $\mathfrak{so}_{m}$.
\end{remark}

\subsection{$\imath$Schur duality}
Let $\V=\sum_{i=1}^{m}\Q(q)v_i$ be the natural representation of $\U$ with the action of the generators as follows:
\begin{align*}
E_{i}\cdot v_r&=\delta_{r, i+1}v_{r-1},\\
F_{i}\cdot v_r&=\delta_{r, i}v_{r+1},\\
K_i\cdot v_r&=\begin{cases}
qv_{i}& \hbox {if } r=i, \\
q^{-1} v_{i+1} & \hbox {if } r=i+1,\\
v_{r} &\hbox {else}.
\end{cases}
\end{align*}
Therefore, the action of $B_i$ on $\V$ can be computed by \eqref{eq:Bi}:
\begin{align*}
B_i\cdot v_r=\begin{cases}
v_{i+1}& \hbox {if } r=i, \\
q\va_i v_{i} & \hbox {if } r=i+1,\\
0 &\hbox {else}.
\end{cases}
\end{align*}

\begin{lemma}
$\V^{\otimes n}$ is a left $\Ui(\so_{m})$-module via $\Delta$.
\end{lemma}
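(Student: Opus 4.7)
The plan is to deduce this by restriction from the fact that $\V^{\otimes n}$ is a module for the ambient quantum group $\U=\U_q(\mathfrak{sl}_m)$. First I would recall that $\V$, as defined by the explicit action of the generators $E_i, F_i, K_i^{\pm 1}$, is a $\U$-module (this is standard and can be checked by direct verification of the defining relations on the basis $\{v_1,\dots,v_m\}$). Then, using the Hopf algebra structure on $\U$ with comultiplication $\Delta$, iterating $\Delta$ to obtain $\Delta^{(n-1)}:\U\to \U^{\otimes n}$ endows $\V^{\otimes n}$ with the structure of a left $\U$-module. Since by Definition \ref{1.2} the $\imath$quantum group $\Ui(\so_m)$ is a $\Q(q)$-subalgebra of $\U$, restriction along the inclusion $\Ui(\so_m)\hookrightarrow \U$ gives $\V^{\otimes n}$ the structure of a left $\Ui(\so_m)$-module.

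There is essentially no obstacle: the only thing worth remarking on is that one does not need $\Ui(\so_m)$ to be a Hopf subalgebra (which it is not, in general, since $\Delta(B_i)\notin \Ui(\so_m)\otimes \Ui(\so_m)$). What is being used is only that $\V^{\otimes n}$ is a $\U$-module, together with the inclusion $\Ui(\so_m)\subset \U$. For later use in computing the commuting $\B_n(q,z)$-action, I would record the explicit formula for the coproduct of the generator $B_i=F_i+\varsigma_i E_iK_i^{-1}$, which follows directly from the formulas for $\Delta(E_i), \Delta(F_i), \Delta(K_i^{\pm 1})$:
\begin{equation*}
\Delta(B_i)=B_i\otimes K_i^{-1}+1\otimes B_i.
\end{equation*}
This formula makes the induced action of $B_i$ on $\V^{\otimes n}$ transparent and will be the form in which the $\Ui$-action is used in the subsequent verification of $\imath$Schur duality.
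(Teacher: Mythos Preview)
Your proposal is correct and matches the paper's (implicit) reasoning: the paper states this lemma without proof, treating it as an immediate consequence of the fact that $\Ui(\so_m)$ is by definition a subalgebra of $\U$ and $\V^{\otimes n}$ is a $\U$-module via the iterated coproduct. The coproduct formula $\Delta(B_i)=B_i\otimes K_i^{-1}+1\otimes B_i$ that you record is exactly what the paper invokes later in the proof of Theorem~\ref{prop:dcp}.
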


For $i=2,\ldots,m$, we set $$\tau_i:=\prod_{j=1}^{i-1}(-\va_j)$$ and $\tau_1=1.$
Then we have the following lemma.
\begin{proposition}
\label{Bact-add}
$\V^{\otimes n}$ is a right $\B_n(q,q^{m})$-module with the action given by
\begin{align*}
&v_{a_1}\otimes\cdots \otimes v_{a_n}\cdot H_j\\
=&\left\{\begin{aligned}
&qv_{a_1}\otimes\cdots \otimes v_{a_n}\ \ & \text{ if } a_j=a_{j+1}, \\
&\cdots\otimes v_{a_{j+1}}\otimes v_{a_j}\otimes \cdots\ \ &\text{ if } a_{j}>a_{j+1}, \\
&\cdots \otimes v_{a_{j+1}}\otimes v_{a_j}\otimes \cdots+(q-q^{-1})v_{a_1}\otimes\cdots \otimes v_{a_n}\ \ & \text{ if } a_{j}<a_{j+1},
\end{aligned}
\right. \\
&v_{a_1}\otimes v_{a_2}\otimes\cdots\otimes v_{a_n}\cdot \Qy=\delta_{a_{1},a_2} \tau_{a_1}\left(\sum_{i=1}^{m}\tau_i^{-1} q^{m-2i+1}v_{i}\otimes v_{i}\right)\otimes v_{a_3}\otimes \cdots \otimes v_{a_n}.
\end{align*}
\end{proposition}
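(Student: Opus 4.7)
My strategy is to verify that the stated formulas respect each of the defining relations (Q1)--(Q8) of Definition~\ref{def:qB1}. The verification splits naturally into the Hecke-type relations on the $H_j$'s, the elementary compatibilities between $H_j$ and $e$, and the deep braid-type relation (Q8).

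First, relations (Q1)--(Q3) concern only the $H_j$'s, and the given formula is the standard Jimbo $R$-matrix action, for which the Hecke relations on $\V^{\otimes n}$ are classical. Relation (Q7) is immediate because, for $i>2$, the operators $H_i$ and $e$ act on disjoint tensor positions. Relation (Q5) is a short inspection: the image of $e$ has the first two positions of the form $\sum_i c_i v_i\otimes v_i$, on which $H_1$ acts by the scalar $q$; the symmetric check gives $eH_1=qe$.

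For (Q4), applying $e$ twice to $v_{a_1}\otimes\cdots\otimes v_{a_n}$ (which is nonzero only if $a_1=a_2$) produces the telescoping factor
\[
\sum_{i=1}^{m}q^{m-2i+1}=[m]=\frac{q^{m}-q^{-m}}{q-q^{-1}}=\frac{z-z^{-1}}{q-q^{-1}}
\]
with $z=q^{m}$, yielding $e^2=\frac{z-z^{-1}}{q-q^{-1}}e$. For (Q6), I expand the middle $H_2$-action on the image of the first $e$ according to the three cases $i=a_3$, $i>a_3$, $i<a_3$; the case $i>a_3$ is annihilated by the outer $e$, while the remaining contributions combine via
\[
(q-q^{-1})\sum_{i=1}^{a_3-1}q^{m-2i+1}=q^{m}-q^{m-2a_3+2}
\]
to produce exactly the scalar $q^{m}=z$, as required.

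The relation (Q8) is the most demanding step. The key observation is that $e$ has one-dimensional image in $\V\otimes \V$, spanned by the vector
\[
w=\sum_{i=1}^{m}\tau_i^{-1}q^{m-2i+1}v_i\otimes v_i.
\]
My plan is to show that $\V^{\otimes n}\cdot e_{(2)}$ lies in the one-dimensional subspace $I_{12}\otimes I_{34}\otimes \V^{\otimes(n-4)}$, where $I_{jk}=\Q(q)\cdot w$ is placed in tensor positions $j,k$. Once this is established, (Q8) reduces to checking that the braiding $H_2H_3H_1^{-1}H_2^{-1}$ acts as the identity on $w\otimes w\in \V^{\otimes 4}$; this in turn is a finite computation that can be carried out either directly, or by using that the Hecke action commutes with the full $\U$-action so that the braiding preserves the one-dimensional span of $w\otimes w$, and fixing the scalar by tracking the coefficient of $v_1^{\otimes 4}$ on both sides. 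The main obstacle will be the containment $\V^{\otimes n}\cdot e_{(2)}\subset I_{12}\otimes I_{34}\otimes \V^{\otimes(n-4)}$: although the first $e$ places the result in $I_{12}\otimes \V^{\otimes(n-2)}$, the intermediate braiding spreads it across many terms before the final $e$ projects positions $(1,2)$ back to $I_{12}$. I intend to handle this by invoking the alternative expressions $e_{(2)}=e(H_2^{-1}H_1^{-1}H_3H_2)e=e(H_2^{-1}H_3^{-1}H_1H_2)e$ from Lemma~\ref{identiesti}(1), which makes the $(1,2)\leftrightarrow(3,4)$ symmetry transparent and reduces the entire verification to a bookkeeping exercise on $\V^{\otimes 4}$.
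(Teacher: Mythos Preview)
Your treatment of (Q1)--(Q7) is correct and coincides with the paper's. For (Q8) your two-step plan is also precisely what the paper carries out: it computes $v_{a_1}\otimes v_{a_1}\otimes v_k\otimes v_l\cdot e_{(2)}$ directly, splitting on $k<l$, $k>l$, $k=l$, finds it vanishes unless $k=l$, in which case it equals a scalar multiple of $w\otimes w$, and then checks $(w\otimes w)\cdot H_2H_1=(w\otimes w)\cdot H_2H_3$ by hand.

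Two cautions about your proposed shortcuts. First, the $\U$-commutation argument does not go through: the vector $w=\sum_i\tau_i^{-1}q^{m-2i+1}v_i\otimes v_i$ is not a $\U$-weight vector (the summands $v_i\otimes v_i$ lie in distinct weight spaces), so $\Q(q)(w\otimes w)$ is not a $\U$-isotypic piece of $\V^{\otimes 4}$, and commutation with $\U$ gives no reason for $H_2H_3H_1^{-1}H_2^{-1}$ to preserve that line. (The line is $\U^\io$-invariant, but the $\U^\io$-invariants in $\V^{\otimes 4}$ are not one-dimensional, so that does not help either.) You must fall back on the direct check you also mention. Second, all of the expressions for $e_{(2)}$ in Lemma~\ref{identiesti}(1) have both outer factors $e$ acting on tensor positions $(1,2)$; none produces an $e$ on positions $(3,4)$, so the $(1,2)\leftrightarrow(3,4)$ ``symmetry'' you invoke does not by itself yield the containment $\V^{\otimes 4}\cdot e_{(2)}\subset\Q(q)(w\otimes w)$. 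What is actually needed---and what the paper does---is the explicit verification that $e_{(2)}$ annihilates $w\otimes v_k\otimes v_l$ whenever $k\neq l$; this is a short but genuine computation that the alternative formulas do not bypass.
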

\begin{proof}
By \cite{Jim86}, the action of $H_i$ satisfies relations (Q1)-(Q3) in Definition \ref{def:qB1}. In order to verify the relation (Q4), noting that the action of $\Qy$ depends solely on the first two tensor factors, it suffices to show that $$v_{a_1}\otimes v_{a_2}\cdot\Qy^2=\frac{q^{m}-q^{-m}}{q-q^{-1}}v_{a_1}\otimes v_{a_2}\cdot\Qy.$$ We have
\begin{align*}
&v_{a_1}\otimes v_{a_2}\cdot\Qy^2\\
=&\delta_{a_{1},a_2} \tau_{a_1}\left(\sum_{i=1}^{m}\tau_i^{-1} q^{m-2i+1}v_{i}\otimes v_{i}\right)\cdot\Qy\\
=&\delta_{a_{1},a_2} \tau_{a_1}\sum_{i=1}^{m}\tau^{-1}_{i} q^{m-2i+1}\left(\tau_i\sum_{j=1}^{m}\tau_{j}^{-1} q^{m-2j+1}v_{j}\otimes v_{j}\right)\\
=&\delta_{a_{1},a_2} \tau_{a_1}\sum_{j=1}^{m}\tau_{j}^{-1}q^{m-2j+1}\left(\sum_{i=1}^{m}q^{m-2i+1}\right)v_{j}\otimes v_{j}\\
=&\frac{q^{m}-q^{-m}}{q-q^{-1}}v_{a_1}\otimes v_{a_2}\cdot\Qy.
\end{align*}
The relation (Q5) can be easily verified. In order to verify the relation (Q6), it suffices to show that
$$v_{a_1}\otimes v_{a_2}\otimes v_{r}\cdot\Qy H_{2}\Qy=q^{m}v_{a_1}\otimes v_{a_2}\otimes v_{r}\cdot\Qy.$$ We have
\begin{align*}
&v_{a_1}\otimes v_{a_2}\otimes v_{r}\cdot\Qy H_{2}\Qy\\
=&\delta_{a_{1},a_2} \tau_{a_1}\left(\sum_{i=1}^{m}\tau_i^{-1} q^{m-2i+1}v_{i}\otimes v_{i}\otimes v_{r}\right)\cdot H_{2}\Qy\\
=&\delta_{a_{1},a_2}\tau_{a_1} \sum_{i=1}^{r-1}\tau_{i}^{-1} q^{m-2i+1}\left(v_{i}\otimes v_{r}\otimes v_{i}+(q-q^{-1})v_{i}\otimes v_{i}\otimes v_{r}\right)\cdot\Qy\\
&+\delta_{a_{1},a_2}\tau_{a_1}\tau_{r}^{-1} q^{m-2r+1}\cdot qv_{r}\otimes v_{r}\otimes v_{r} \cdot\Qy\\
&+\delta_{a_{1},a_2}\tau_{a_1}\sum_{i=r+1}^{m}\tau_i^{-1} q^{m-2i+1}v_{i}\otimes v_{r}\otimes v_{i}\cdot\Qy\\
=&\delta_{a_{1},a_2}\tau_{a_1} \sum_{i=1}^{r-1}\tau_{i}^{-1} q^{m-2i+1}(q-q^{-1})\left(\sum_{j=1}^{m}\tau_i\tau_j^{-1} q^{m-2j+1}v_{j}\otimes v_{j}\otimes v_{r}\right)\\
&+\delta_{a_{1},a_2}\tau_{a_1}\tau_r^{-1} q^{m-2r+2}\left(\sum_{j=1}^{m}\tau_r\tau_j^{-1} q^{m-2j+1}v_{j}\otimes v_{j}\otimes v_{r}\right)\\
=&\delta_{a_{1},a_2}\sum_{j=1}^{m}\tau_{a_1}\tau_j^{-1} q^{m-2j+1}\left(\sum_{i=1}^{r-1}q^{m-2i+1}(q-q^{-1})+q^{m-2r+2}\right)v_{j}\otimes v_{j}\otimes v_{r}\\
=&q^{m}v_{a_1}\otimes v_{a_2}\otimes v_{r}\cdot\Qy.
\end{align*}

The relation (Q7) can be easily verified. From the action of $H_j$ we can easily obtain the action of $H_j^{-1}$ as follows:
\begin{align*}
&v_{a_1}\otimes\cdots \otimes v_{a_n}\cdot H_j^{-1}\\
=&\left\{\begin{aligned}
&q^{-1}v_{a_1}\otimes\cdots \otimes v_{a_n},\ \ & \text{ if } a_j=a_{j+1}, \\
&\cdots \otimes v_{a_{j+1}}\otimes v_{a_j}\otimes \cdots+(q^{-1}-q)v_{a_1}\otimes\cdots \otimes v_{a_n},\ \ &\text{ if } a_{j}>a_{j+1},\\
&\cdots\otimes v_{a_{j+1}}\otimes v_{a_j}\otimes \cdots,\ \ & \text{ if } a_{j}<a_{j+1}.
\end{aligned}
\right.
\end{align*}

In order to verify $\Qy(H_{2}H_{3}H_{1}^{-1}H_{2}^{-1})\Qy=\Qy(H_{2}H_{3}H_{1}^{-1}H_{2}^{-1})\Qy H_{2}H_{3}H_{1}^{-1}H_{2}^{-1}$, it suffices to show that
\begin{equation}\label{sophicated equality}
v_{a_1}\otimes v_{a_1}\otimes v_{k}\otimes v_{l}\cdot\Qy(H_{2}H_{3}H_{1}^{-1}H_{2}^{-1})\Qy H_{2}H_{1}=v_{a_1}\otimes v_{a_1}\otimes v_{k}\otimes v_{l}\cdot\Qy(H_{2}H_{3}H_{1}^{-1}H_{2}^{-1})\Qy H_{2}H_{3}.
\end{equation}

When $k< l$, we have
\begin{align*}
&v_{a_1}\otimes v_{a_1}\otimes v_{k}\otimes v_{l}\cdot\Qy(H_{2}H_{3}H_{1}^{-1}H_{2}^{-1})\Qy\\
=&\left(\sum_{i=1}^{m}\tau_{a_1}\tau_i^{-1} q^{m-2i+1}v_{i}\otimes v_{i}\otimes v_{k}\otimes v_{l}\right)\cdot H_{2}H_{3}H_{1}^{-1}H_{2}^{-1}\Qy\\
=&\sum_{i=1}^{k-1}\tau_{a_1}\tau_i^{-1} q^{m-2i+1}\left(v_{i}\otimes v_{k}\otimes v_{i}\otimes v_{l}+(q-q^{-1})v_{i}\otimes v_{i}\otimes v_{k}\otimes v_{l}\right)\cdot H_{3}H_{1}^{-1}H_{2}^{-1}\Qy\\
&+\tau_{a_1}\tau_k^{-1} q^{m-2k+1}\cdot qv_{k}\otimes v_{k}\otimes v_{k}\otimes v_{l}\cdot H_{3}H_{1}^{-1}H_{2}^{-1}\Qy\\
&+\sum_{i=k+1}^{m}\tau_{a_1}\tau_i^{-1} q^{m-2i+1}v_{i}\otimes v_{k}\otimes v_{i}\otimes v_{l}\cdot H_{3}H_{1}^{-1}H_{2}^{-1}\Qy\\
=&\tau_{a_1}\tau_k^{-1} q^{m-2k+1}\cdot q(q-q^{-1})q^{-2}v_{k}\otimes v_{k}\otimes v_{k}\otimes v_{l}\cdot \Qy\\
&+\sum_{i=k+1}^{l-1}\tau_{a_1}\tau_i^{-1} q^{m-2i+1}(q-q^{-1})(q^{-1}-q)v_{i}\otimes v_{i}\otimes v_{k}\otimes v_{l}\cdot \Qy\\
&+\tau_{a_1}\tau_l^{-1} q^{m-2l+1}\cdot q(q^{-1}-q)v_{l}\otimes v_{l}\otimes v_{k}\otimes v_{l}\cdot \Qy\\
=&\sum_{j=1}^{m}\tau_{a_1}\tau_j^{-1} q^{m-2j+1}v_{j}\otimes v_{j}\otimes v_{k}\otimes v_{l}\\
&\times\left(q^{m-2k}(q-q^{-1})-(q-q^{-1})^{2}\sum_{i=k+1}^{l-1}q^{m-2i+1}+q^{m-2l+2}(q^{-1}-q)\right)\\
=&0.
\end{align*}
When $k> l$, it can be similarly shown that $$v_{a_1}\otimes v_{a_1}\otimes v_{k}\otimes v_{l}\cdot\Qy(H_{2}H_{3}H_{1}^{-1}H_{2}^{-1})\Qy=0.$$ When $k=l$, we have
\begin{align*}
&v_{a_1}\otimes v_{a_1}\otimes v_{k}\otimes v_{k}\cdot\Qy(H_{2}H_{3}H_{1}^{-1}H_{2}^{-1})\Qy\\
=&\left(\sum_{i=1}^{m}\tau_{a_1}\tau_i^{-1} q^{m-2i+1}v_{i}\otimes v_{i}\otimes v_{k}\otimes v_{k}\right)\cdot H_{2}H_{3}H_{1}^{-1}H_{2}^{-1}\Qy\\
=&\sum_{i=1}^{k-1}\tau_{a_1}\tau_i^{-1}q^{m-2i+1}v_{k}\otimes v_{k}\otimes v_{i}\otimes v_{i}\cdot \Qy+\tau_{a_1}\tau_k^{-1}q^{m-2k+1}v_{k}\otimes v_{k}\otimes v_{k}\otimes v_{k}\cdot \Qy\\
&+\sum_{i=k+1}^{m}\tau_{a_1}\tau_i^{-1}q^{m-2i+1}v_{k}\otimes v_{k}\otimes v_{i}\otimes v_{i}\cdot \Qy\\
=&\sum_{i=1}^{m}\tau_{a_1}\tau_i^{-1}sq^{m-2i+1}v_{k}\otimes v_{k}\otimes v_{i}\otimes v_{i}\cdot \Qy\\
=&\sum_{i,j=1}^{m}\tau_{a_1}\tau_k\tau_i^{-1}\tau_j^{-1}q^{2m-2i-2j+2}v_{j}\otimes v_{j}\otimes v_{i}\otimes v_{i}.
\end{align*}
It is straightforward to show that
\begin{align*}
&\sum_{i,j=1}^{m}\tau_{a_1}\tau_k\tau_i^{-1}\tau_j^{-1}q^{2m-2i-2j+2}v_{j}\otimes v_{j}\otimes v_{i}\otimes v_{i}\cdot H_{2}H_{1}\\
=&\sum_{i,j=1}^{m}\tau_{a_1}\tau_k\tau_i^{-1}\tau_j^{-1}q^{2m-2i-2j+2}v_{j}\otimes v_{j}\otimes v_{i}\otimes v_{i}\cdot H_{2}H_{3}.
\end{align*}
Therefore, \eqref{sophicated equality} holds. The equality $$H_{2}H_{3}H_{1}^{-1}H_{2}^{-1}\Qy(H_{2}H_{3}H_{1}^{-1}H_{2}^{-1})\Qy=\Qy(H_{2}H_{3}H_{1}^{-1}H_{2}^{-1})\Qy$$ can be proved similarly. We are done.
\end{proof}

\begin{remark}
\label{remark:ortho}
Let $O_{m}$ and $SO_{m}$ denote the orthogonal group and special orthogonal group, respectively. As shown in \cite{Br37}, the Brauer algebra surjects onto $End_{O_{m}}(V^{\otimes k})$ for all $k\in \Z_{>0}$, where $V$ is the natural representation of $O_m$. But if one replaces $O_{m}$ by $SO_{m}$, then we have the following result (see \cite[\S 5.1.3]{LZ06}):
\begin{align*}
&\text{If $m$ is odd, then $End_{O_{m}}(V^{\otimes k})=End_{SO_{m}}(V^{\otimes k})$ for all $k$.}\\
&\text{If $m$ is even, then $End_{O_{m}}(V^{\otimes k})=End_{SO_{m}}(V^{\otimes k})$ if and only if $m-1\geqslant 2k$.}
\end{align*}
\end{remark}


\begin{theorem}
\label{prop:dcp}
$(1)$ The left action of $\Ui(\so_{m})$ on $\V^{\otimes n}$ commutes with the right action of $\B_n(q,q^{m})$ defined in Proposition~\ref{Bact-add}:
\[
\Ui(\so_{m}) \stackrel{\Psi}{\curvearrowright} \V^{\otimes n} \stackrel{\Phi}{\curvearrowleft} \B_n(q,q^{m}).
\]
$(2)$ When $m$ is odd or $m$ is even with $m-1\geqslant 2n$, the following double centralizer property holds:
\begin{align*}
\Psi(\Ui(\so_{m}))=\End_{\B_n(q,q^{m})} (\V^{\otimes n}),\\
\Phi(\B_n(q,q^{m}))=\End_{\Ui(\so_{m})} (\V^{\otimes n}).
\end{align*}
\end{theorem}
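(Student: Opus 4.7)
The plan is to prove parts (1) and (2) separately. For (1) I will verify that each generator $B_i$ of $\Ui(\so_m)$ commutes with each of the generators $H_j$ and $\Qy$ of $\B_n(q,q^m)$. For (2) I will specialize at $q=1$, invoke classical Brauer duality, and then promote to generic $q$ by a semicontinuity argument.

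For the commutation $[B_i,H_j]=0$: since $\Ui(\so_m)$ is by construction a subalgebra of $\U=\U_q(\mathfrak{sl}_m)$ and Jimbo's classical duality \cite{Jim86} shows that the $\U$-action via the iterated coproduct on $\V^{\otimes n}$ commutes with the Hecke algebra action defined by the $H_j$'s, the commutation for $B_i$ is automatic. For $[B_i,\Qy]=0$: using
\[
\Delta^{(n-1)}(B_i)=\sum_{j=1}^{n}1^{\otimes(j-1)}\otimes B_i\otimes K_i^{-1\,\otimes(n-j)},
\]
the terms with $j\ge 3$ have identity on positions $1,2$ and commute with $\Qy$ trivially. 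The combined $j=1,2$ terms act on positions $1,2$ as $\Delta(B_i)=B_i\otimes K_i^{-1}+1\otimes B_i$, tensored with $K_i^{-1\,\otimes(n-2)}$ on the remaining positions, which likewise commutes with $\Qy$. So the verification reduces to checking $[\Delta(B_i),\Qy]=0$ on $\V\otimes\V$, a direct case-by-case computation on $v_a\otimes v_b$. Using the identity $\tau_{i+1}=-\va_i\tau_i$, a short calculation in fact shows that both $\Delta(B_i)\circ\Qy$ and $\Qy\circ\Delta(B_i)$ vanish individually (the second because $\Delta(B_i)(v_a\otimes v_b)$ is either non-diagonal or produces an explicit cancellation among the diagonal outputs).

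For (2), specialize $q\to 1$ with $\va_i=-1$: by the remark following Definition~\ref{1.2}, $\Ui(\so_m)$ becomes $\mathbf{U}(\so_m)$, $\B_n(q,q^m)$ becomes the classical Brauer algebra $D_n(m)$, and the two actions become the classical ones. By Brauer \cite{Br37}, the image of $D_n(m)$ equals $\End_{O_m}(\V^{\otimes n})$; by Remark~\ref{remark:ortho}, under our hypothesis this coincides with $\End_{SO_m}(\V^{\otimes n})=\End_{\mathbf{U}(\so_m)}(\V^{\otimes n})$. To extend to generic $q$, use semicontinuity: $\dim\Phi(\B_n)$ is lower semi-continuous in $q$ (rank of matrices with polynomial entries can only drop upon specialization), while $\dim\End_{\Ui}(\V^{\otimes n})$ is upper semi-continuous (kernel dimension of a linear system with polynomial entries can only grow). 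Combined with the trivial containment $\Phi(\B_n)\subseteq \End_{\Ui}(\V^{\otimes n})$, the chain
\[
\dim\End_{O_m}(\V^{\otimes n})\le \dim\Phi(\B_n)\le \dim\End_{\Ui}(\V^{\otimes n})\le \dim\End_{\mathbf{U}(\so_m)}(\V^{\otimes n})=\dim\End_{O_m}(\V^{\otimes n})
\]
forces equality throughout, giving $\Phi(\B_n)=\End_{\Ui}(\V^{\otimes n})$. The other identity $\Psi(\Ui)=\End_{\B_n}(\V^{\otimes n})$ follows by the same sandwich argument with the roles of the two algebras swapped.

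The main obstacle I anticipate is handling arbitrary parameters $\va_i\in\mathbb{Z}[q,q^{-1}]$: part (1) should go through uniformly in $\va_i$, but the $q\to 1$ specialization in (2) is cleanest only when $\va_i=-1$, where the classical limit is the standard enveloping algebra. For other parameter choices an additional input (such as parameter-independence of the relevant endomorphism-ring dimensions, or a direct decomposition of $\V^{\otimes n}$ as a $\Ui$-module) may be required.
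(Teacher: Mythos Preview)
Your proposal is correct and follows essentially the same approach as the paper. Part~(1) is identical: Jimbo duality handles the $H_j$, and the coproduct formula $\Delta(B_i)=B_i\otimes K_i^{-1}+1\otimes B_i$ reduces the commutation with $\Qy$ to the $n=2$ case, where both $\Delta(B_i)\circ\Qy$ and $\Qy\circ\Delta(B_i)$ vanish.

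For part~(2) the paper also reduces to the classical $q=1$ case and invokes Brauer's result together with the $O_m$ versus $SO_m$ comparison. The only technical difference is in how the lift from $q=1$ to generic $q$ is justified: the paper phrases the double centralizer property as multiplicity-freeness of the bimodule decomposition and appeals to the semisimplicity of $\B_n(q,q^m)$ for generic $q$ and the specialization of its cell modules (citing \cite{We12a}, \cite{N18}, \cite{GL96}); you instead run a direct rank/kernel semicontinuity sandwich. Both arguments are standard and yield the same conclusion; yours is perhaps more self-contained, while the paper's makes the link to the cellular structure explicit. Your caveat about general parameters $\va_i$ is well taken---the paper's proof also specializes to $\va_i=-1$ at $q=1$ without further comment, so this is not a gap peculiar to your argument.
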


\begin{proof}
$(1)$ By the Jimbo duality in \cite{Jim86}, we know that the action of $\U$ commutes with the action of $H_i$ for $1\le i \le n-1$. Thus, to show the commuting actions of $\Ui(\so_{m})$ and $\B_n(q,q^{m})$, it remains to check the commutativity of the actions of $B_i$ ($1\leq i\leq m-1$) and $\Qy$.

Thanks to $\Delta(B_i)=B_i\otimes K_i^{-1}+ 1 \otimes B_i$ and the fact that the action of $\Qy$ depends solely on the first two tensor factors, it suffices to consider $n=2$. By a direct calculation, it can be shown that
\begin{align*}
B_i\cdot (v_{a_1}\otimes v_{a_2}\cdot \Qy)=0=(B_i\cdot v_{a_1}\otimes v_{a_2})\cdot \Qy.
\end{align*}
We omit the details.

$(2)$ The double centralizer property is equivalent to the multiplicity-free decomposition of $\V^{\otimes n}$ as an $\Ui(\so_{m})$-$\B_n(q,q^{m})$-bimodule. According to \cite[\S 4.5]{We12a} or \cite[\S 4]{N18}, the $q$-Brauer algebra $\B_n(q,q^{m})$ is semisimple when $q$ is generic; moreover, when taking the $q\to1$ limit, the cell module of $\B_n(q,q^{m})$ recovers the cell module of the classical Brauer algebra defined in \cite{GL96}. Thus, the proof of the double centralizer property reduces by a deformation argument to the $q=1$ setting. When taking the $q\to1$ limit and $\va_i=-1$ $(1\leq i\leq m-1)$, $\U^{\io}(\so_{m})$ becomes the enveloping algebra of the special orthogonal Lie algebra $\mathfrak{so}_{m}$, $\V$ becomes its natural representation. By lifting, $\V$ can also be regarded as a representation of the special orthogonal group $SO_{m}$. Moreover, according to Remark \ref{remark:ortho}, when $m$ is odd or $m$ is even with $m-1\geqslant 2n$, we have $$End_{\so_{m}}(\V^{\otimes n})=End_{SO_{m}}(\V^{\otimes n})=End_{O_{m}}(\V^{\otimes n}).$$ The multiplicity-free decomposition of $\V^{\otimes n}$ in this case has been established in \cite{Br37}, \cite{Br56a} and \cite{Br56b}. We are done.
\end{proof}

\begin{remark}
The condition on $m$ required in Theorem \ref{prop:dcp}$(2)$ can be removed if we enlarge the $\imath$quantum group to an algebra generated by $\Ui(\so_{m})$ and $\varrho$ over $\Q(q)$ with the following relations:
\begin{equation*}
\label{def:Ui(O)}
\varrho^2=1,\quad \varrho B_i=(-1)^{\delta_{1,i}}B_i\varrho \quad\mbox{for $1\leq i\leq m-1$}.
\end{equation*}
We put the action of $\varrho$ on $\V$ by
$$
\varrho\cdot v_r=\begin{cases}
-v_{1}& \hbox {if } r=1, \\
v_{r} & \hbox {if } r>1.
\end{cases}
$$
One can show this action commutes with the $q$-Brauer algebra action and when taking the $q\to1$ limit, the new algebra reduces to $\U(\so_{m})\oplus \U(\so_{m})\varrho$.
\end{remark}


\section{$\imath$Schur duality of type AII}
\label{sec:AII}
\subsection{$\io$quantum group of type AII}
In this section we fix $m\in \mathbb{Z}_{\geq 1}$ and focus on the quantum symmetric pair of type AII with the Satake diagram as below  (cf.~\cite[Table 4]{BW18b}):
\begin{equation*}
\label{AII}
\begin{tikzpicture}[scale=1, semithick]
\node (1) [fill,circle,label=below:{$1$},scale=0.5] at (0,0){};
\node (2) [circle,draw,label=below:{$2$},scale=0.5] at (1.3,0){};
\node (3)  [fill,circle,label=below:{$3$},scale=0.5]at (2.6,0) {};
\node (4)  at (3.9,0){$\cdots$};
\node (5) [circle,draw,label=below:{$2m-2$},scale=0.5] at (5.2,0){};
\node (6) [fill,circle,label=below:{$2m-1$},scale=0.5] at (6.5,0){};

\path (1) edge (2)
          (2) edge (3)
          (3) edge (4)
          (4) edge (5)
          (5) edge (6);
\end{tikzpicture}
\end{equation*}

Let $(a_{ij})$ be the Cartan matrix of type $A_{2m-1}$ and let $\U_{q}(\mathfrak{sl}_{2m})$ denote the corresponding quantum group over $\Q(q)$ (cf. $\S4.1$). Recall that in \cite[\S37.1.3]{Lu94}, Lusztig has defined an algebra automorphism $T''_{i, 1}: \U_{q}(\mathfrak{sl}_{2m}) \rightarrow \U_{q}(\mathfrak{sl}_{2m})$, which we shall write $T_i$ for simplicity. We need the following action of it on $\U_{q}(\mathfrak{sl}_{2m})$:
$$T_{i} (E_i)=-F_i K_{i}, \quad T_{i}(E_j)=  \sum_{r+s=-a_{ij}} (-1)^r q^{-r} E^{(s)}_i E_j E^{(r)}_i \quad\mathrm{ for }~j\neq i,$$
where $E^{(s)}_i=E_i^{s}/[s]!$.
\begin{definition}\label{def:AII}
The $\io$quantum group $\Ui(\spin_{2m})$ of type AII, with a set of parameters $\{\varsigma_i\mid i=2,4,\ldots,2m-2\}\subset \mathbb{Z}[q, q^{-1}]$, is the $\Q(q)$-subalgebra of $\U_{q}(\mathfrak{sl}_{2m})$ generated by the following elements:
$$B_i=F_i+\va_i T_{i-1}T_{i+1}(E_{i})K_i^{-1}\quad \mathrm{ for }~i=2,4,\ldots,2m-2,$$
$$E_{j}, F_{j}, K_{j}^{\pm 1}\quad \mathrm{ for }~j=1,3,\ldots,2m-1.$$
\end{definition}

\begin{remark}
Let $E_{i,j}$ denote the $2m\times 2m$ elementary matrices and $M$ be a $2m\times 2m$ skew-symmetric quasi-diagonal matrix $M=diag\{J,\ldots,J\}$ with $J=\begin{pmatrix} 0 & 1 \\ -1 & 0 \end{pmatrix}$.

Suppose $\va_i=-1$ ($i$ even). When taking the $q\to 1$ limit in $\Ui(\spin_{2m})$, we see that the generators $E_j$ and $F_j$ ($j \text{ odd}$) reduce to matrices $E_{j,j+1}$ and $E_{j+1,j}$ respectively. Moreover, $B_i$ ($i \text{ even}$) reduces to $E_{i+1,i}+E_{i-1,i+2}$. 

Therefore, $\Ui(\spin_{2m})$ indeed reduces to the enveloping algebra of the symplectic Lie algebra $\mathfrak{sp}_{2m}$, which is characterized as a Lie algebra consisting of all $2m\times 2m$ matrices $X$ satisfying the condition $X^tM+MX=0$.
\end{remark}

In fact, for $i=2,4,\ldots,2m-2$, we have
\begin{align*}
T_{i-1}&T_{i+1}(E_{i})\\
&=E_{i+1}E_{i-1}E_{i}-q^{-1}E_{i-1}E_{i}E_{i+1}-q^{-1}E_{i+1}E_{i}E_{i-1}+q^{-2}E_{i}E_{i-1}E_{i+1}.
\end{align*}
Let $\W=\sum_{i=1}^{2m}\Q(q)v_i$ be the natural representation of $\U_{q}(\mathfrak{sl}_{2m})$. By a direct calculation we see that the action of $B_{2l}$ ($l=1,2,\ldots,m-1$) on $\W$ is given by
\begin{align*}
B_{2l}\cdot v_r=\begin{cases}
v_{2l+1}& \hbox {if } r=2l, \\
-q^{-1}\va_{2l} v_{2l-1} & \hbox {if } r=2l+2,\\
0 &\hbox {else}.
\end{cases}
\end{align*}




\subsection{$\imath$Schur duality}

For $i=2,\ldots,m$, we set $$\kappa_i:=\prod_{j=1}^{i-1}(-\va_{2j})$$ and $\kappa_1=1$. Then the following lemma gives a right $\B_n(-q^{-1},q^{2m})$-module structure on $\W^{\otimes n}$.
\begin{proposition}
\label{Bact2}
There is a right action of $\B_n(-q^{-1},q^{2m})$ on $\W^{\otimes n}$ via
\begin{align*}
&v_{a_1}\otimes\cdots \otimes v_{a_n}\cdot H_k\\
=&\left\{\begin{aligned}
&qv_{a_1}\otimes\cdots \otimes v_{a_n}\ \ & \text{ if } a_k=a_{k+1}, \\
&\cdots\otimes v_{a_{k+1}}\otimes v_{a_k}\otimes \cdots\ \ &\text{ if } a_{k}>a_{k+1}, \\
&\cdots \otimes v_{a_{k+1}}\otimes v_{a_k}\otimes \cdots+(q-q^{-1})v_{a_1}\otimes\cdots \otimes v_{a_n}\ \ & \text{ if } a_{k}<a_{k+1},
\end{aligned}
\right.
\end{align*}
and
\begin{align*}
&v_{a_1}\otimes v_{a_2}\otimes\cdots\otimes v_{a_n}\cdot \Qy\\
=&\begin{cases}
\sum\limits_{j=1}^{m} \kappa_i\kappa_j^{-1}q^{2m+1-3i-j}(v_{2j-1}\otimes v_{2j}-qv_{2j}\otimes v_{2j-1})\otimes v_{a_3}\otimes \cdots \otimes v_{a_n} \\
\hspace{8cm}\hbox { if } a_1=2i-1, a_2=2i, \\
(-q)v_{a_2}\otimes v_{a_1}\otimes v_{a_3}\otimes \cdots \otimes v_{a_n}\cdot \Qy \hspace{2.35cm}\hbox { if } a_1=2i, a_2=2i-1,\\
0 \hspace{7.8cm}\hbox { else},
\end{cases}
\end{align*}
where $i=1,2,\ldots,m$.
\end{proposition}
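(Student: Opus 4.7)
The plan is to verify, directly on $\W^{\otimes n}$, each of the defining relations (Q1)--(Q8) of Definition~\ref{def:qB1} under the specialization $q\mapsto -q^{-1}$ and $z\mapsto q^{2m}$. Relations (Q1)--(Q3) concern only the $H_k$'s; since the formula for the action of $H_k$ is the standard type A R-matrix action on a tensor product of natural modules, these follow from the Jimbo duality \cite{Jim86} (alternatively a short case check on $v_{a_1}\otimes\cdots\otimes v_{a_n}\cdot H_k^2$ in the three cases $a_k=a_{k+1}$, $a_k>a_{k+1}$, $a_k<a_{k+1}$ verifies (Q1) in the form $H_k^2=(q-q^{-1})H_k+1$, which is exactly what (Q1) specializes to). Relation (Q7) is immediate because $\Qy$ acts only on the first two tensor positions while $H_k$ for $k>2$ acts on positions $k,k+1$.

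The first genuine computation is (Q4). Setting $\omega_j:=v_{2j-1}\otimes v_{2j}-q\,v_{2j}\otimes v_{2j-1}$, the key reduction is
\[
\omega_j\cdot \Qy \;=\; (1+q^{2})\, v_{2j-1}\otimes v_{2j}\cdot \Qy,
\]
which is a direct consequence of the rule $v_{2j}\otimes v_{2j-1}\cdot\Qy=-q\cdot v_{2j-1}\otimes v_{2j}\cdot\Qy$. Iterating and invoking the geometric-series identity $(1+q^{2})q^{2m+1}\sum_{j=1}^m q^{-4j}=[2m]$, one obtains $v_{2i-1}\otimes v_{2i}\cdot \Qy^{2}=[2m]\cdot v_{2i-1}\otimes v_{2i}\cdot \Qy$; the remaining cases are either trivially zero or follow from the antisymmetry $v_{2i}\otimes v_{2i-1}\cdot\Qy=-q\cdot v_{2i-1}\otimes v_{2i}\cdot\Qy$. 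For (Q5), it suffices to observe that $\omega_j\cdot H_1=-q^{-1}\omega_j$ (a one-line calculation using the three cases of the $H_1$-action), whence $\Qy H_1=-q^{-1}\Qy$; a parallel case split on $(a_1,a_2)$ yields $H_1\Qy=-q^{-1}\Qy$.

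For (Q6) and (Q8) I would imitate the corresponding computations in the proof of Proposition~\ref{Bact-add}. Thus for (Q6) I compute $v_{a_1}\otimes v_{a_2}\otimes v_r\cdot\Qy H_2\Qy$, case-splitting on $(a_1,a_2)$ and on the parity and value of $r$, then collapse the result to $q^{2m}\Qy$ by reusing the symplectic antisymmetry together with the same geometric-series identity from (Q4). For (Q8) the check reduces to vectors of the form $v_{a_1}\otimes v_{a_2}\otimes v_k\otimes v_l$; the cases $k\ne l$ should give zero after the intermediate step $\Qy(H_2H_3H_1^{-1}H_2^{-1})\Qy$ via a telescoping cancellation analogous to \eqref{eabc}, while the case $k=l$ produces a vector on which $H_2H_3H_1^{-1}H_2^{-1}-1$ acts trivially, by a symmetry argument paralleling \eqref{habc}. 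The principal obstacle is (Q8): the coefficients are elaborate sums of $q$-powers and $\ka$-ratios, and a substantial cancellation is required; I expect the symplectic antisymmetry $v_{2j}\otimes v_{2j-1}\cdot\Qy=-q\cdot v_{2j-1}\otimes v_{2j}\cdot\Qy$ to be what drives this cancellation, exactly as the diagonal formula $\Qy(v_i\otimes v_j)=\delta_{ij}(\cdots)$ drove the analogous cancellation in the type AI case.
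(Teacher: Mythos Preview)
Your overall plan matches the paper's proof, and your treatments of (Q4) and (Q5) are correct. However, your case split for (Q8) is wrong, and this is not a cosmetic slip. In type AI the action of $\Qy$ is supported on the diagonal $v_i\otimes v_i$, so splitting the third and fourth tensor factors into $k=l$ versus $k\ne l$ is the right move. In type AII, $\Qy$ is supported instead on symplectic pairs $v_{2i-1}\otimes v_{2i}$ and $v_{2i}\otimes v_{2i-1}$, so the correct dichotomy is: the intermediate expression $v_{a_1}\otimes v_{a_2}\otimes v_p\otimes v_q\cdot\Qy(H_2H_3H_1^{-1}H_2^{-1})\Qy$ vanishes whenever $\{p,q\}\ne\{2i-1,2i\}$ for every $i$ (in particular it \emph{always} vanishes when $p=q$), and is nonzero exactly when $\{p,q\}=\{2i-1,2i\}$ for some $i$, in which case it is a scalar multiple of $\sum_{i,j}\kappa_i^{-1}\kappa_j^{-1}q^{-i-j}\,\omega_i\otimes\omega_j$, on which $H_2H_1$ and $H_2H_3$ agree. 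Your claim that ``the cases $k\ne l$ should give zero'' already fails at $(p,q)=(1,2)$, and the case $p=q$ that you single out is vacuous rather than the interesting one. The paper handles the vanishing case by a further parity split (both even, both odd, opposite parity but unpaired), each with its own telescoping cancellation; you should expect to do the same. Also, your analogies to \eqref{eabc} and \eqref{habc} point to the bar-involution section; the directly relevant template is the (Q8) computation inside the proof of Proposition~\ref{Bact-add}.
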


\begin{proof}
Noting that the action of $\Qy$ depends solely on the first two tensor factors, in order to verify the relation (Q4) in Definition \ref{def:qB1} it suffices to show that $$v_{1}\otimes v_{2}\cdot\Qy^2=\frac{q^{2m}-q^{-2m}}{q-q^{-1}}v_{1}\otimes v_{2}\cdot\Qy.$$

We have
\begin{align*}
v_1\otimes v_2\cdot e^2=&\sum_{j=1}^{m} \kappa_1\kappa_j^{-1}q^{2m-2-j}(v_{2j-1}\otimes v_{2j}-q v_{2j}\otimes v_{2j-1})\cdot \Qy\\
=&\sum_{j=1}^{m} \kappa_1\kappa_j^{-1}q^{2m-2-j}(1+q^2)(v_{2j-1}\otimes v_{2j})\cdot \Qy \\
=&\sum_{j=1}^{m} \kappa_1\kappa_j^{-1}q^{2m-2-j}(1+q^2)\kappa_j\kappa_1^{-1}q^{-3(j-1)}(v_1\otimes v_2)\cdot \Qy\\
=&(1+q^2)\left(\sum_{j=1}^{m}q^{2m+1-4j}\right)(v_1\otimes v_2)\cdot \Qy \\
=&\frac{q^{2m}-q^{-2m}}{q-q^{-1}} (v_1\otimes v_2)\cdot \Qy.
\end{align*}
The relation (Q5) can be easily verified. In order to verify the relation (Q6), it suffices to show that
$$v_{1}\otimes v_{2}\otimes v_{r}\cdot\Qy H_{2}\Qy=q^{2m}v_{1}\otimes v_{2}\otimes v_{r}\cdot\Qy.$$
When $r=2k-1$, we have
\begin{align*}
&v_{1}\otimes v_{2}\otimes v_{2k-1}\cdot\Qy H_{2}\Qy\\
=&\sum_{j=1}^{m} \kappa_1\kappa_j^{-1}q^{2m-2-j}(v_{2j-1}\otimes v_{2j}\otimes v_{2k-1}-q v_{2j}\otimes v_{2j-1}\otimes v_{2k-1})\cdot H_{2}\Qy\\
=&\sum_{j=1}^{k-1} q^{2m+1-4j}(q-q^{-1})v_{1}\otimes v_{2}\otimes v_{2k-1}\cdot\Qy\\
&-q\sum_{j=1}^{k-1}q^{2m+1-4j}(q-q^{-1})(-q)v_{1}\otimes v_{2}\otimes v_{2k-1}\cdot\Qy\\
&-q^{2m+2-4k}(-q^{2})v_{1}\otimes v_{2}\otimes v_{2k-1}\cdot\Qy\\
=&\left((q-q^{-1})(1+q^{2})\sum_{j=1}^{k-1}q^{2m+1-4j}+q^{2m+4-4k}\right)v_{1}\otimes v_{2}\otimes v_{2k-1}\cdot\Qy\\
=&q^{2m}v_{1}\otimes v_{2}\otimes v_{r}\cdot\Qy.
\end{align*}
When $r=2k$, it can be proved similarly. (Q7) can be easily verified.

In order to verify $\Qy(H_{2}H_{3}H_{1}^{-1}H_{2}^{-1})\Qy=\Qy(H_{2}H_{3}H_{1}^{-1}H_{2}^{-1})\Qy H_{2}H_{3}H_{1}^{-1}H_{2}^{-1}$, it suffices to show that
\begin{equation}\label{sophicated equality-adddd}
v_{1}\otimes v_{2}\otimes v_{p}\otimes v_{q}\cdot\Qy(H_{2}H_{3}H_{1}^{-1}H_{2}^{-1})\Qy H_{2}H_{1}=v_{1}\otimes v_{2}\otimes v_{p}\otimes v_{q}\cdot\Qy(H_{2}H_{3}H_{1}^{-1}H_{2}^{-1})\Qy H_{2}H_{3}.
\end{equation}
When $p=2k$ and $q=2l$ with $k<l$, we have
\begin{align*}
&v_{1}\otimes v_{2}\otimes v_{2k}\otimes v_{2l}\cdot\Qy(H_{2}H_{3}H_{1}^{-1}H_{2}^{-1})\Qy\\
=&\sum_{j=1}^{m} \kappa_1\kappa_j^{-1}q^{2m-2-j}(v_{2j-1}\otimes v_{2j}\otimes v_{2k}\otimes v_{2l}-
\\&\hspace{5cm}q v_{2j}\otimes v_{2j-1}\otimes v_{2k}\otimes v_{2l})\cdot H_{2}H_{3}H_{1}^{-1}H_{2}^{-1}\Qy\\
=&-(q-q^{-1})^{2}\sum_{j=k+1}^{l-1}\kappa_1\kappa_j^{-1}q^{2m-2-j}v_{2j-1}\otimes v_{2j}\otimes v_{2k}\otimes v_{2l}\cdot \Qy\\
&+\kappa_1\kappa_l^{-1}q^{2m-2-l}\cdot q(q^{-1}-q)v_{2l-1}\otimes v_{2l}\otimes v_{2k}\otimes v_{2l}\cdot \Qy\\
&-\kappa_1\kappa_k^{-1}q^{2m-1-k}(q-q^{-1})q^{-1}v_{2k}\otimes v_{2k-1}\otimes v_{2k}\otimes v_{2l}\cdot \Qy\\
&-\kappa_1\kappa_k^{-1}q^{2m-1-k}(q-q^{-1})^{2}q^{-1}v_{2k-1}\otimes v_{2k}\otimes v_{2k}\otimes v_{2l}\cdot \Qy\\
&+(q-q^{-1})^{2}\sum_{j=k+1}^{l}\kappa_1\kappa_j^{-1}q^{2m-1-j}v_{2j}\otimes v_{2j-1}\otimes v_{2k}\otimes v_{2l}\cdot \Qy\\
=&A v_{1}\otimes v_{2}\otimes v_{2k}\otimes v_{2l}\cdot \Qy,
\end{align*}
where \begin{align*}A=&-(q-q^{-1})^{2}\sum_{j=k+1}^{l-1}q^{2m+1-4j}+(q^{-1}-q)q^{2m+2-4l}\\
&+(q-q^{-1})q^{2m+2-4k}-(q-q^{-1})^{2}q^{2m+1-4k}-(q-q^{-1})^{2}\sum_{j=k+1}^{l}q^{2m+3-4j}\\
=&-(q-q^{-1})^{2}(1+q^{2})\sum_{j=k+1}^{l-1}q^{2m+1-4j}
\\&\hspace{5cm}+(q-q^{-1})q^{2m-4k}+(q^{-1}-q)q^{2m+4-4l}\\
=&0.
\end{align*}
Therefore, in this case we have $v_{1}\otimes v_{2}\otimes v_{2k}\otimes v_{2l}\cdot\Qy(H_{2}H_{3}H_{1}^{-1}H_{2}^{-1})\Qy=0.$

In a similar way, we can show that $v_{1}\otimes v_{2}\otimes v_{2k}\otimes v_{2l}\cdot\Qy(H_{2}H_{3}H_{1}^{-1}H_{2}^{-1})\Qy=0$ when $k\geq l$, $v_{1}\otimes v_{2}\otimes v_{2k-1}\otimes v_{2l-1}\cdot\Qy(H_{2}H_{3}H_{1}^{-1}H_{2}^{-1})\Qy=0$ for any $k, l$, and $v_{1}\otimes v_{2}\otimes v_{2k}\otimes v_{2l-1}\cdot\Qy(H_{2}H_{3}H_{1}^{-1}H_{2}^{-1})\Qy=0=v_{1}\otimes v_{2}\otimes v_{2k-1}\otimes v_{2l}\cdot\Qy(H_{2}H_{3}H_{1}^{-1}H_{2}^{-1})\Qy$ when $k\neq l$.

When $p=2k$ and $q=2k-1$, we have
\begin{align*}
&v_{1}\otimes v_{2}\otimes v_{2k}\otimes v_{2k-1}\cdot\Qy(H_{2}H_{3}H_{1}^{-1}H_{2}^{-1})\Qy\\
=&\sum_{j=1}^{k-1}\kappa_1\kappa_j^{-1}q^{2m-2-j}v_{2k}\otimes v_{2k-1}\otimes v_{2j-1} \otimes v_{2j}\cdot \Qy\\
&+\kappa_1\kappa_k^{-1}q^{2m-2-k}v_{2k}\otimes v_{2k-1}\otimes v_{2k-1} \otimes v_{2k}\cdot \Qy\\
&+\sum_{j=k+1}^{m}\kappa_1\kappa_j^{-1}q^{2m-2-j}v_{2k}\otimes v_{2k-1}\otimes v_{2j-1} \otimes v_{2j}\cdot \Qy\\
&-\sum_{j=1}^{k-1}\kappa_1\kappa_j^{-1}q^{2m-1-j}v_{2k}\otimes v_{2k-1}\otimes v_{2j} \otimes v_{2j-1}\cdot \Qy\\
&-\kappa_1\kappa_k^{-1}sq^{2m-1-k}v_{2k}\otimes v_{2k-1}\otimes v_{2k} \otimes v_{2k-1}\cdot \Qy\\
&-\sum_{j=k+1}^{m}\kappa_1\kappa_j^{-1}q^{2m-1-j}v_{2k}\otimes v_{2k-1}\otimes v_{2j} \otimes v_{2j-1}\cdot \Qy\\
=&\sum_{j=1}^{m}\kappa_1\kappa_j^{-1}q^{2m-2-j}v_{2k}\otimes v_{2k-1}\otimes (v_{2j-1} \otimes v_{2j}-q v_{2j} \otimes v_{2j-1})\cdot \Qy\\
=&B\cdot \sum_{i, j=1}^{m}\kappa^2_1\kappa_i^{-1}\kappa_j^{-1}q^{4m-4-i-j}(v_{2i-1} \otimes v_{2i}-q v_{2i} \otimes v_{2i-1}) \otimes
\\&\hspace{5cm}(v_{2j-1} \otimes v_{2j}-q v_{2j} \otimes v_{2j-1}),
\end{align*}
where $B=-q^{4-3k}\kappa_k\kappa_1^{-1}$. By a direct calculation, we can show that
\begin{align*}
&\sum_{i, j=1}^{m}\kappa^2_1\kappa_i^{-1}\kappa_j^{-1}q^{4m-4-i-j}(v_{2i-1} \otimes v_{2i}-q v_{2i} \otimes v_{2i-1}) \otimes
\\&\hspace{5cm}(v_{2j-1} \otimes v_{2j}-q v_{2j} \otimes v_{2j-1})\cdot H_{2}H_{1}\\
=&\sum_{i, j=1}^{m}\kappa^2_1\kappa_i^{-1}\kappa_j^{-1}q^{4m-4-i-j}(v_{2i-1} \otimes v_{2i}-q v_{2i} \otimes v_{2i-1}) \otimes
\\&\hspace{5cm}(v_{2j-1} \otimes v_{2j}-q v_{2j} \otimes v_{2j-1})\cdot H_{2}H_{3}.
\end{align*}
Therefore, \eqref{sophicated equality-adddd} holds when $p=2k$ and $q=2k-1$. Similarly, we can show that \eqref{sophicated equality-adddd} holds when $p=2k-1$ and $q=2k$. The equality $$H_{2}H_{3}H_{1}^{-1}H_{2}^{-1}\Qy(H_{2}H_{3}H_{1}^{-1}H_{2}^{-1})\Qy=\Qy(H_{2}H_{3}H_{1}^{-1}H_{2}^{-1})\Qy$$ can be proved similarly. We are done.
\end{proof}

\begin{theorem}
The left action of $\Ui(\spin_{2m})$ on $\W^{\otimes n}$ commutes with the right action defined in Proposition~\ref{Bact2}:
\[
\Ui(\spin_{2m}) \stackrel{\Psi'}{\curvearrowright} \W^{\otimes n} \stackrel{\Phi'}{\curvearrowleft} \B_n(-q^{-1},q^{2m}).
\]
Moreover, the following double centralizer property holds:
\begin{align*}
&\Psi'(\Ui(\spin_{2m}))=\End_{\B_n(-q^{-1},q^{2m})} (\W^{\otimes n}),\\
&\Phi'(\B_n(-q^{-1},q^{2m}))=\End_{\Ui(\spin_{2m})} (\W^{\otimes n}).
\end{align*}
\end{theorem}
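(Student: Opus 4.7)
The plan is to mirror closely the proof of Theorem~\ref{prop:dcp} from the type AI case. For part (1), the commutativity of the two actions on $\W^{\otimes n}$ splits into two pieces. The action of $H_k$ in Proposition~\ref{Bact2} is the standard Jimbo $R$-matrix action: the quadratic relation $(H_k-q)(H_k+q^{-1})=0$ is identical in $\B_n(-q^{-1},q^{2m})$ and in the usual type $A$ Hecke algebra, and the explicit action formulas coincide with those of the Schur-Jimbo duality. Hence by \cite{Jim86}, every element of $\U_q(\mathfrak{sl}_{2m})$, and in particular every generator of $\Ui(\spin_{2m})$, commutes with each $H_k$. It remains to verify that $\Qy$ commutes with the action of each generator of $\Ui(\spin_{2m})$, namely $E_j, F_j, K_j^{\pm 1}$ for odd $j$ and $B_i$ for even $i$.

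For the odd-indexed generators, the coproducts have the standard two-term form, and since $\Qy$ affects only the first two tensor factors, the check reduces via the iterated coproduct to direct calculations on $v_{a_1}\otimes v_{a_2}$ and $v_{a_1}\otimes v_{a_2}\otimes v_{a_3}$, performed by case analysis on the parities of the indices, in exact analogy with the type AI computation. The \emph{main technical obstacle} lies in handling $B_i$, which contains the cubic element $T_{i-1}T_{i+1}(E_i)$ whose coproduct expands into a large number of terms. Fortunately, the action of $E_{i-1}, E_i, E_{i+1}$ on $\W$ is extremely sparse---each raises one specific basis vector and annihilates the rest---so most contributions vanish. Exploiting this sparsity together with the coideal property of $\Ui(\spin_{2m})$ inside $\U_q(\mathfrak{sl}_{2m})$, the verification reduces to finitely many explicit identities on $v_{a_1}\otimes \cdots \otimes v_{a_j}$ for $j \leq 4$, organized by whether $(a_1, a_2)$ lies in the support of $\Qy$ and whether the remaining indices trigger a nontrivial contribution of $B_i$.

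For part (2), we deform to the classical limit as in the type AI case. The $q$-Brauer algebra $\B_n(-q^{-1},q^{2m})$ is semisimple over $\Q(q)$, and its cell modules specialize at $q=1$ to those of a Brauer-type algebra (cf.~\cite{We12a, N18}). Setting $\va_{2l}=-1$ and taking $q\to 1$: $\Ui(\spin_{2m})$ degenerates to $\U(\mathfrak{sp}_{2m})$ acting on $\W$ as the natural representation, and $\B_n(-q^{-1},q^{2m})$ degenerates---after the rescaling $\Qy\leftrightarrow -e$, which absorbs the sign appearing in the limit of (Q5)---to the symplectic Brauer algebra $D_n(-2m)$. Brauer's classical theorem \cite{Br37, Br56a, Br56b} provides the $\U(\mathfrak{sp}_{2m})$-$D_n(-2m)$-bimodule multiplicity-free decomposition of $\W^{\otimes n}$, with no restriction on $(m, n)$ because $Sp_{2m}$ is connected. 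Lifting this decomposition to generic $q$ by a standard dimension argument then yields both centralizer identities simultaneously.
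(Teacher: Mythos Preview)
Your proposal is correct and follows essentially the same strategy as the paper. Two points of comparison are worth noting. First, your reduction to $j\le 4$ for the $B_i$-check is overly cautious: once you know $\Qy$ commutes with every element of $\Ui(\spin_{2m})$ on $\W^{\otimes 2}$, the right-coideal property $\Delta(\Ui)\subseteq \Ui\otimes\U$ immediately extends this to $\W^{\otimes n}$, so checking the generators on $\W^{\otimes 2}$ alone suffices---this is exactly what the paper does. Second, rather than expanding the cubic $T_{i-1}T_{i+1}(E_i)$ and relying on sparsity, the paper invokes the explicit closed formula for $\Delta(B_{2l})|_{\W^{\otimes 2}}$ from \cite[Example~7.9]{Ko14}, which reduces the verification to a short five-term computation. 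Your deformation argument for part (2), including the observation that connectedness of $Sp_{2m}$ removes any restriction on $(m,n)$, matches the paper exactly.
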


\begin{proof}
By the Jimbo duality in \cite{Jim86}, we know that the action of $\U_{q}(\mathfrak{sl}_{2m})$ commutes with the action of $H_k$ for $1\le k \le n-1$. Thus, to show the commuting actions of $\Ui(\spin_{2m})$ and $\B_n(-q^{-1},q^{2m})$, it remains to check the commutativity of the actions of the generators of $\Ui(\spin_{2m})$ and $\Qy$. Noting that the action of $\Qy$ depends solely on the first two tensor factors, it suffices to consider $n=2$.

We have $E_1\cdot v_{1}\otimes v_{2}=q v_{1}\otimes v_{1}$, $E_1\cdot v_{2}\otimes v_{1}=v_{1}\otimes v_{1}$ and $E_1\cdot v_{k}\otimes v_{l}=0$ for $\{k,l\}\neq \{1, 2\}$, which imply that
$$E_1\cdot (v_{a_1}\otimes v_{a_2}\cdot \Qy)=0=(E_1\cdot v_{a_1}\otimes v_{a_2})\cdot \Qy.$$
Similarly, we can show that $$E_j\cdot (v_{a_1}\otimes v_{a_2}\cdot \Qy)=0=(E_j\cdot v_{a_1}\otimes v_{a_2})\cdot \Qy,$$
$$F_j\cdot (v_{a_1}\otimes v_{a_2}\cdot \Qy)=0=(F_j\cdot v_{a_1}\otimes v_{a_2})\cdot \Qy,$$
$$K_{j}^{\pm 1}\cdot (v_{a_1}\otimes v_{a_2}\cdot \Qy)=v_{a_1}\otimes v_{a_2}\cdot \Qy=(K_{j}^{\pm 1}\cdot v_{a_1}\otimes v_{a_2})\cdot \Qy$$
for $j=1,3,\ldots,2m-1$.

According to \cite[Example 7.9]{Ko14} we have
\begin{equation*}
\begin{aligned}
\Delta(B_{2l})|_{\W^{\otimes 2}}=&B_{2l}\otimes K_{2l}^{-1}+1\otimes F_{2l}+\va_{2l}(q-q^{-1})E_{2l+1}\otimes E_{2l-1}E_{2l}\\
&-\va_{2l}(q^{-1}-q^{-3})E_{2l-1}\otimes E_{2l}E_{2l+1}-\va_{2l} q^{-1}K_{2l-1}K_{2l+1}\otimes E_{2l-1}E_{2l}E_{2l+1}.
\end{aligned}
\end{equation*}
Therefore, we have $B_2\cdot v_{1}\otimes v_{2}=v_{1}\otimes v_{3}$, $B_2\cdot v_{2}\otimes v_{1}=v_{3}\otimes v_{1}$, $B_2\cdot v_{3}\otimes v_{4}=-\va_2 v_{3}\otimes v_{1}$ and $B_2\cdot v_{4}\otimes v_{3}=-\va_2 v_{1}\otimes v_{3}+\va_2(q-q^{-1})v_{3}\otimes v_{1}$. Thus,
\begin{equation}\label{b2cdotv1}
\begin{aligned}
B_2\cdot (v_{1}\otimes v_{2}\cdot \Qy)=&B_2\cdot ( q^{2m-3}(v_{1}\otimes v_{2}-q v_{2}\otimes v_{1})-\va_2^{-1}q^{2m-4}(v_{3}\otimes v_{4}-q v_{4}\otimes v_{3}))\\
=&0,
\end{aligned}
\end{equation}
and $(B_2\cdot v_{1}\otimes v_{2})\cdot \Qy=v_{1}\otimes v_{3}\cdot \Qy=0$. By \eqref{b2cdotv1}, we have $B_2\cdot (v_{a_1}\otimes v_{a_2}\cdot \Qy)=0$. By a direct calculation, we can show that $(B_2\cdot v_{a_1}\otimes v_{a_2})\cdot \Qy=0$. Similarly, we can show that
$$B_{2l}\cdot (v_{a_1}\otimes v_{a_2}\cdot \Qy)=0=(B_{2l}\cdot v_{a_1}\otimes v_{a_2})\cdot \Qy,$$
for any $l=2,\ldots,m-1$. We omit the details.

The proof of the double centralizer property is almost identical to the proof of Proposition \ref{prop:dcp}, which is equivalent to the multiplicity-free decomposition of $\W^{\otimes n}$ as an $\Ui(\spin_{2m})$-$\B_n(-q^{-1},q^{2m})$-bimodule. The proof of the double centralizer property reduces by a deformation argument to the $q=1$ setting. When taking the $q\to 1$ limit and $\va_i=-1$, $\Ui(\spin_{2m})$ becomes the enveloping algebra of the symplectic Lie algebra $\mathfrak{sp}_{2m}$, $\W$ becomes its natural representation, and the multiplicity-free decomposition of $\W^{\otimes n}$ in this case has been established in \cite{Br37}, \cite{Br56a} and \cite{Br56b}. We are done.
\end{proof}

\end{document}